\newtheorem{twr}{Theorem}[section]
\newtheorem{theorem}[twr]{Theorem}
\newtheorem{lem}[twr]{Lemma}
\newtheorem{stw}[twr]{Proposition}
\newtheorem{pbm}[twr]{Problem}
\newtheorem{col}[twr]{Corollary}
\newtheorem{con}[twr]{Conjecture}
\newtheorem{que}[twr]{Question}
\theoremstyle{definition}
\newtheorem{exple}[twr]{Example}
\theoremstyle{remark}
\newtheorem*{rem}{Remark}
\renewcommand\@seccntformat[1]{\csname the#1\endcsname.\quad}
\DeclareMathOperator{\ovb}{v}
\DeclareMathOperator{\ovbb}{\textbf{v}}
\DeclareMathOperator{\bb}{\textnormal{\textbf{b}}}
\DeclareMathOperator{\cc}{\textnormal{\textbf{c}}}
\DeclareMathOperator{\dd}{\textnormal{\textbf{d}}}
\DeclareMathOperator{\degu}{\overline{\deg}}
\DeclareMathOperator{\degl}{\underline{\deg}}
\DeclareMathOperator{\lvl}{lvl}
\DeclareMathOperator{\rk}{r}
\address{Jagiellonian University \\
Faculty of Mathematics and Computer Science \\
Institute of Mathematics \\
\L ojasiewicza\ 6 \\
30-348 Krak\'ow \\
Poland}
\email{blazejz@poczta.onet.pl}
\keywords{$m$--ary partitions, recurrence sequences, congruences}
\subjclass[2010]{11B37 , 11B50}
\begin{document}

\title[Recurrence sequences]{Recurrence sequences connected with the $m$--ary partition function and their divisibility properties}
\author{Błażej Żmija}

\begin{abstract}
In this paper we introduce a class of sequences connected with the $m$--ary partition function and investigate their congruence properties. In particular, we get facts about the sequences of $m$--ary partitions $(b_{m}(n))_{m\in\mathbb{N}}$ and $m$--ary partitions with no gaps $(c_{m}(n))_{m\in\mathbb{N}}$. We prove, for example, that for any natural number $2<h\leq m+1$ in both sequences $(b_{m}(n))_{m\in\mathbb{N}}$ and $(c_{m}(n))_{m\in\mathbb{N}}$ any residue class modulo $h$ appears infinitely many times.

Moreover, we give new proofs of characterisations modulo $m$ in terms of base--$m$ representation of $n$ for sequences $(b_{m}(n))_{m\in\mathbb{N}}$ and $(c_{m}(n))_{m\in\mathbb{N}}$. We also present a general method of finding such characterisations modulo any power of $m$. Using our approach we get description of $(b_{m}(n)\mod{\mu_{2}})_{n\in\mathbb{N}}$, where $\mu_{2}=m^{2}$ if $m$ is odd and $\mu_{2}=m^{2}/2$ if $m$ is even.
\end{abstract}

\maketitle

\section{Introduction}

For any fixed natural number $m\geq 2$ and any $n\in\mathbb{N}_{0}$ (we use the notation $\mathbb{N}=\{1,2,3,\ldots \}$ and $\mathbb{N}_{0}:=\mathbb{N}\cup\{0\}$) let $b_{m}(n)$ denote the number of the $m$--ary partitions of $n$, i.e. the number of representations of the integer $n$ in the form $n=n_{0}+n_{1}m+n_{2}m^{2}+\ldots +n_{t}m^{t}$, where $n_{0},\ldots ,n_{t}\in\mathbb{N}_{0}$. We denote $\textbf{b}_{m}=(b_{m}(n))_{n\in\mathbb{N}_{0}}$. We call the number $m$ {\it the degree of the sequence} $\textbf{b}_{m}$. We explain this name later.

There is a well known recurrence relation for $b_{m}(n)$:
\begin{equation}\label{rekur}
\left\{ 
\begin{array}{lr}
b_{m}(mn)=b_{m}(mn+1)=\ldots =b_{m}(mn+(m-1)), & n\geq 0, \\
b_{m}(mn)-b_{m}(mn-1)=b_{m}(n), & n\geq 0, \\
b_{m}(0)=1.
\end{array} 
\right.
\end{equation}
Let us denote the first equality in (\ref{rekur}) by (\ref{rekur}a) and the second one by (\ref{rekur}b). 

Congruence properties of $b_{m}(n)$ were firstly studied (in the case $m=2$) by Churchhouse in \cite{Chu}. Among other things, he characterized the numbers $n$ for which $b_{2}(n)\equiv 0\pmod 4$ and proved that none of the numbers $b_{2}(n)$ is divisible by $8$. This result was strengthened for example by Alkauskas \cite{Alk} who gave a formula for $b_{2}(n)\mod 32$ for any $n\in\mathbb{N}$.

In the cited paper Churchhouse conjectured that for any odd number $n$ the congruence $b_{2}(2^{s+2}n)-b_{2}(2^{s}n)\equiv 2^{\mu(s)}\pmod {2^{\mu(s)+1}}$ holds with $\mu(s)=\lfloor\frac{3s+4}{2}\rfloor$. This was independently proved by R{\o}dseth \cite{Rod} and Gupta \cite{Gup2}. 

The theorems mentioned above may be extended to the case of any degree $m$. This was done for example by Andrews, Fraenkel and Sellers in \cite{And3}. They showed that if $n=a_{0}+a_{1}m+\ldots +a_{t}m^{t}$ where $0\leq a_{j}\leq m-1$ for $j=0,\ldots ,t$, then $b_{m}(mn)\equiv\prod_{j=1}^{t}(a_{j}+1)\pmod m$. It follows from (\ref{rekur}a) that this result characterizes the numbers $b_{m}(n)\mod m$ for any $n\in\mathbb{N}$ (not necessarily divisible by~$m$). However, it is important to notice that the same result was obtained for 15 years earlier by Alkauskas in the Lithuanian version of his paper \cite{Alk2}, and his proof is easier and shorter than the one given by Andrews {\it et. al}. A different proof was given by Edgar in \cite{Edg}.

Another interesting result was obtained by R{\o}dseth whose theorem, after generalisations done by Andrews \cite{And1} and Gupta \cite{Gup1}, takes the form $b_{m}(m^{r+1}n)\equiv b_{m}(m^{r}n)\pmod{\frac{m^{r}}{\gamma_{r}}}$ for all $m\geq 2$ and $r,\ n\geq 1$. Here $\gamma_{r}=1$ if $m$ is odd and $\gamma_{r}=2^{r-1}$ if $m$ is even.

We also consider a special class of the $m$--ary partitions, namely those for which any positive integer $j$ satisfies the following condition: if $n_{j}>0$, then $n_{j-1}>0$. We call them the $m$--ary partitions with no gaps and denote their number by $c_{m}(n)$ and write $\textbf{c}_{m}=(c_{m}(n))_{n\in\mathbb{N}_{0}}$. Similarly as in case of the sequence $\textbf{b}_{m}$, we call the number $m$ the degree of the sequence $\textbf{c}_{m}$.

Andrews {\it et. al.} found in \cite{And4} a recurrence relation for the sequence $\cc_{m}$. We rewrite these equations in an equivalent but more accurate form:
\begin{equation}\label{rekur2}
\left\{ 
\begin{array}{lr}
c_{m}(mn+1)=c_{m}(mn+2)=\ldots =c_{m}(mn+m), & n\geq 0, \\
c_{m}(mn+1)-c_{m}(mn)=c_{m}(n), & n\geq 1, \\
c_{m}(0)=c_{m}(1)=1. &
\end{array} 
\right.
\end{equation}
Similarly as before, we denote the first equation in (\ref{rekur2}) by (\ref{rekur2}a) and the second by (\ref{rekur2}b).

The congruence $c_{m}(n)\equiv 0\pmod m$ was considered by Andrews {\it et. al.} in \cite{And4}. They found a characterization of solutions of this congruence similar to the one obtained in the case of $\textbf{b}_{m}$. In \cite{And2} Andrews {\it et. al.} found infinite families of solutions of the congruence $c_{m}(n)\equiv 0\pmod {m^{j}}$ for $m$ prime and $0\leq j<m$. This result was generalized by Hou {\it et. al.} in \cite{Hou} to the congruence $c_{m}(n)\equiv 0\pmod {\frac{m^{j}}{\gamma_{j}}}$, where $m$ is an arbitrary natural number, $0\leq j<m$ and $\gamma_{j}=1$ if $m$ is odd and $\gamma_{j}=2^{j-1}$ if $m$ is even.

There is a huge number of theorems of this type. Some of them can be found for example in \cite{Dir} and \cite{RodS2}. The common thing about these results 
is that they investigate the solutions of the congruence modulo $h$ only for $h$ somehow connected with $m$ (i.e. $h$ is either a divisor of $m$, a power of $m$ or $m$ itself). In this paper we study a more general case without any links between $h$ and $m$ (except for some bounds on $h$ which depend on $m$). Alkauskas was the first (and probably the last up to this paper) to do this. In the paper mentioned before he proved (by performing some numerical computations) that the congruence $b_{2}(n)\equiv 0\pmod h$ has infinitely many solutions in $n\in\mathbb{N}$ for any $2\leq h\leq 14$, $h\neq 8$. He made his computations about 15 years ago and we have improved them. In the next section we also define a new class of recurrence sequences $(b_{T}(n))_{n\in\mathbb{N}_{0}}$ which generalizes all sequences mentioned before. For these objects we investigate the congruence $b_{T}(n)\equiv t\pmod h$ for some natural numbers $h$ and $0\leq t\leq h-1$.

We also give characterisations for special cases of sequences $(b_{T}(n))_{n\in\mathbb{N}_{0}}$ modulo some numbers. In particular, we reprove theorems of Alkauskas and Andrews {\it et.al.} characterising numbers $b_{m}(n)$ and $c_{m}(n)$ modulo $m$ and present a general method which allows us to get similar characterisations modulo any power of $m$ and use it to describe the sequence $(b_{m}(n)\mod{\mu_{2}})_{n\in\mathbb{N}_{0}}$, where $\mu_{2}=m^{2}$ if $m$ is odd and $\mu_{2}=m^{2}/2$ if $m$ is even. 

We consider the sequence $\ovbb_{m}=(\ovb_{m}(n))_{n\in\mathbb{N}_{0}}$ given by:
\begin{equation}\label{rekurover}
\left\{ 
\begin{array}{lr}
\ovb_{m}(mn)=\ovb_{m}(mn+1)=\ldots =\ovb_{m}(mn+(m-1)), & n\geq 0, \\
\ovb_{m}(mn)-\ovb_{m}(mn-1)=\ovb_{m}(n)+\ovb_{m}(n-1), & n\geq 1, \\
\ovb_{m}(0)=1.
\end{array} 
\right.
\end{equation}
We again call the number $m$ the degree of the sequence $\ovbb_{m}$. 
We introduce the aforementioned sequence due to the elegant and transparent way its properties work with some of the theorems obtained here. For example we find a characterisation modulo $m$ for it and obtain some partial results modulo $m^{2}$.

\section{Notation and some results}

Let $K=(k_{n})_{n\in\mathbb{N}_{0}}$ be a sequence of natural numbers such that $0=k_{0}<k_{1}<k_{2}<k_{3}<\ldots$ and let $L=(l_{i})_{i=0}^{t}$ and $R=(r_{i})_{i=0}^{u}$ be finite sequences of integers. For a triple $T=(K,L,R)$ we define a new sequence $\textbf{b}_{T}=(b_{T}(n))_{n\in\mathbb{N}_{0}}$ as follows:
\begin{equation}\label{rekurT}
\left\{
\begin{array}{l}
b_{T}(k_{n})=b_{T}(k_{n}+1)=\ldots =b_{T}(k_{n+1}-1), \\
\\
\sum_{i=0}^{t}l_{i}b_{T}(k_{n-i})=\sum_{j=0}^{u}r_{j}b_{T}(n-j).
\end{array}
\right.
\end{equation}
We denote the first equality in (\ref{rekurT}) by (\ref{rekurT}a) and the second by (\ref{rekurT}b). Notice that we do not specify the first values of the sequence $(b_{T}(n))_{n\in\mathbb{N}_{0}}$. However, in the rest of the paper we do not need this information. Thus, any triple $T$, in fact, generates infinitely many sequences $\textbf{b}_{T}$, but theorems proved in the sequel are true for all of them.

Let us introduce the numbers: 
\begin{displaymath}
l=\sum_{i=0}^{t}l_{i} \text{\ \ \ and\ \ \ } r=\sum_{j=0}^{u}r_{i}.
\end{displaymath}

We also define the {\it upper} and {\it lower degree} of the sequence $K$:
\begin{displaymath}
\begin{split}
\degu K:= & \sup\{\ d\in\mathbb{N}\ |\ \text{there exist infinitely many $n\in\mathbb{N}$ such that } k_{n}-k_{n-1}\geq d\},\\
\degl K:= & \inf\{\ d\in\mathbb{N}\ |\ \text{there exist infinitely many $n\in\mathbb{N}$ such that } k_{n}-k_{n-1}=d\}.
\end{split}
\end{displaymath}
If $\degu K =\degl K$, we denote this common value by $\deg K$ and call it the {\it degree} of the sequence $K$. We also use the notation of the degree (or upper/lower degree) of the sequence $\textbf{b}_{T}$, which is the degree (or upper/lower degree respectively) of the sequence $K$ from the corresponding triple $T$.

\begin{exple}\label{exple1}
The sequences mentioned in the introduction are a special case of $b_{T}$ for the following choices of $T$ (up to the first values):

1. if $K=(mn)_{n=0}^{\infty}$, $L=(1,-1)$ and $R=(1)$, then we get $\textbf{b}_{m}$,

2. if $K$ is defined by $k_{0}=0$ and $k_{n}=mn+1$ for $n\geq 1$, $L=(1,-1)$ and $R=(1)$, then we get~$\textbf{c}_{m}$,

3. if $K=(mn)_{n=0}^{\infty}$, $L=(1,-1)$ and $R=(1,1)$, then we get $\ovbb_{m}$.

For all these sequences we have $\deg K=m$, which explains why we called the number $m$ the degree.
\end{exple}

Now we are ready to give the following result.

\begin{theorem}\label{TwMain}
Let $T=(K,L,R)$ be as above. Let $h\in\mathbb{N}$ be co--prime to $r=\sum_{j=0}^{u}r_{j}$ and satisfy $2\leq h\leq \degl K-u+1$. Then at least one of the following holds:
\begin{enumerate}
\item[\textsc{1)}] All but finitely many values of the sequence $\bb_{T}$ are divisible by $h$;
\item[\textsc{2)}] All but finitely many values of the sequence $\bb_{T}$ are not co--prime to $h$ and the first condition is not satisfied;
\item[\textsc{3)}] For any sequence $\dd_{T}=(d_{T}(n))_{n\in\mathbb{N}_{0}}$ such that 
\begin{displaymath}
d_{T}(k_{n}+s)-d_{T}(k_{n}+s-1)=l_{0}b_{T}(k_{k_{n}+s})+l_{1}b_{T}(k_{k_{n}+s-1})+\ldots +l_{t}b_{T}(k_{k_{n}+s-t})
\end{displaymath} 
for $n\in\mathbb{N}_{0}$ and $s=0,\ldots ,k_{n+1}-k_{n}-1$ any residue class modulo $h$ appears in the sequence $(d_{T}(n)\mod h)_{n\in\mathbb{N}_{0}}$ infinitely many times.
\end{enumerate}
Moreover, only conditions \textsc{2)} and \textsc{3)} can be satisfied together.
\end{theorem}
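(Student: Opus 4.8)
The plan is to reduce the whole statement to a single dichotomy by analysing the increments of $\dd_{T}$ block by block. First I would rewrite the defining relation of $\dd_{T}$: putting $N=k_{n}+s$ and applying (\ref{rekurT}b) at the index $N$, the right-hand side $\sum_{i=0}^{t}l_{i}b_{T}(k_{N-i})$ equals $\sum_{j=0}^{u}r_{j}b_{T}(N-j)$, so that $d_{T}(N)-d_{T}(N-1)=\sum_{j=0}^{u}r_{j}b_{T}(N-j)$ for every $N$. As $(n,s)$ ranges over its domain the index $k_{n}+s$ runs through all of $\mathbb{N}_{0}$, so these increments determine $\dd_{T}$ up to one additive constant; since condition \textsc{3)} is invariant under adding a constant, it suffices to argue with the increments alone.

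Next I would use (\ref{rekurT}a), which says $b_{T}$ is constant on each block $[k_{n},k_{n+1})$ with common value $\beta(n):=b_{T}(k_{n})$. If $N=k_{n}+s$ with $s\geq u$, then $N,N-1,\ldots,N-u$ all lie in the $n$-th block, so $b_{T}(N-j)=\beta(n)$ for $0\leq j\leq u$ and hence $d_{T}(N)-d_{T}(N-1)=r\,\beta(n)$. Thus on the interior of the $n$-th block the increments of $\dd_{T}$ are constant and equal to $r\beta(n)$; concretely, the values $d_{T}(k_{n}+u-1),d_{T}(k_{n}+u),\ldots,d_{T}(k_{n+1}-1)$ form an arithmetic progression modulo $h$ with common difference $r\beta(n)$ and with $k_{n+1}-k_{n}-u+1$ terms.

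The crucial case is that $\bb_{T}$ has infinitely many terms coprime to $h$, and I claim this forces \textsc{3)}. By (\ref{rekurT}a) it is equivalent to $\beta(n)$ being coprime to $h$ for infinitely many $n$. By the definition of $\degl K$, every block length below $\degl K$ occurs only finitely often, so all but finitely many blocks satisfy $k_{n+1}-k_{n}\geq\degl K$. For such a block with $\beta(n)$ coprime to $h$, the common difference $r\beta(n)$ is coprime to $h$, since $\gcd(r,h)=1$ gives $\gcd(r\beta(n),h)=\gcd(\beta(n),h)=1$, while the progression has at least $\degl K-u+1\geq h$ terms by the hypothesis $h\leq\degl K-u+1$. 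An arithmetic progression of length $\geq h$ with invertible common difference modulo $h$ meets every residue class modulo $h$; as there are infinitely many such blocks, every residue class occurs infinitely often in $(d_{T}(n)\bmod h)$, which is precisely \textsc{3)}.

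It remains to assemble the trichotomy and verify the exclusivity clause. If instead only finitely many terms of $\bb_{T}$ are coprime to $h$, then either $h\mid b_{T}(n)$ for all large $n$, which is \textsc{1)}, or not, which together with the previous sentence is \textsc{2)}; hence at least one of \textsc{1)}, \textsc{2)}, \textsc{3)} holds. Conditions \textsc{1)} and \textsc{2)} are incompatible by the definition of \textsc{2)}. Finally \textsc{1)} and \textsc{3)} are incompatible: if $h\mid b_{T}(n)$ for all large $n$ then $\beta(n)\equiv0\pmod h$ eventually, so every increment $\sum_{j=0}^{u}r_{j}b_{T}(N-j)$ vanishes modulo $h$ for all large $N$, whence $\dd_{T}$ is eventually constant modulo $h$ and cannot hit every residue class infinitely often since $h\geq2$. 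I expect the main obstacle to be the block-interior computation and the exact count of progression terms: one must check that discarding the first $u$ positions of a block makes the increments genuinely constant, and that $h\leq\degl K-u+1$ yields at least $h$ terms, as this is exactly where both standing hypotheses on $h$ are consumed.
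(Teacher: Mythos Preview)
Your proof is correct and follows the same route as the paper: rewrite the increment $d_{T}(N)-d_{T}(N-1)$ via (\ref{rekurT}b) as $\sum_{j}r_{j}b_{T}(N-j)$, observe that on the interior of a block (indices $k_{n}+u,\ldots,k_{n+1}-1$) this collapses to $r\,b_{T}(k_{n})$ by (\ref{rekurT}a), and conclude that whenever $b_{T}(k_{n})$ is a unit modulo $h$ and the block is long enough the resulting arithmetic progression of length $\geq\degl K-u+1\geq h$ sweeps all residues. Your trichotomy assembly is cleaner than the paper's (which leaves the case split between \textsc{2)} and \textsc{3)} implicit in the phrase ``it is enough''), and you also supply the argument for the exclusivity clause ``only \textsc{2)} and \textsc{3)} can hold together'', which the paper states but does not prove in its proof of Theorem~\ref{TwMain}.
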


\begin{proof}
Assume that the first case does not hold. That is, there exist infinitely many natural numbers $n$ for which $b_{T}(n)$ is not divisible by $h$. It is enough to show that if $b_{T}(k_{n})$ is co--prime to $h$ for some sufficiently large $n$, then any residue class modulo $h$ appears among numbers $d_{T}(k_{n}+s)\mod h$ for $s=u-1,\ldots ,u+h-1$. Assume $b_{T}(k_{n})\equiv c\pmod h$ for certain $c\in\mathbb{Z}/h\mathbb{Z}$. Thus, by (\ref{rekurT}a), we have:
\begin{displaymath}
\begin{split}
b_{T}(k_{n})\equiv &\ c\pmod h, \\
b_{T}(k_{n}+1)\equiv &\  c\pmod h, \\
\vdots\ & \\
b_{T}(k_{n+1}-1)\equiv &\  c\pmod h. \\
\end{split}
\end{displaymath}
Observe that for $s=u,\ldots ,u+h-2$ one has:
\begin{displaymath}
\begin{split}
d_{T}(k_{n}+s)-d_{T}(k_{n}+s- & 1)=l_{0}b_{T}(k_{k_{n}+s})+l_{1}b_{T}(k_{k_{n}+s-1})+\ldots +l_{t}b_{T}(k_{k_{n}+s-t}) \\
= & r_{0}b_{T}(k_{n}+s)+r_{1}b_{T}(k_{n}+s-1)+\ldots +r_{u}b_{T}(k_{n}+s-u) \\
\equiv & rc \pmod h,
\end{split}
\end{displaymath}
where we used the assumption $u+h-2\leq\degl K -1$ in the equivalence on the end. Hence, we simply get the congruence:
\begin{displaymath}
d_{T}(k_{n}+s)\equiv d(k_{n}+u-1)+(s-u+1)rc \pmod h.
\end{displaymath}
We know that $rc$ is co--prime to $h$ and this, together with the last congruence, implies the statement of the theorem.
\end{proof}

\begin{rem}
In the previous theorem we can take the sequence $\dd_{T}$ given by:
\begin{equation}\label{seqd}
d_{T}(k_{n}+s):=\sum_{i=0}^{t}\Big(\sum_{j=0}^{i}l_{j}\Big)b_{T}(k_{k_{n}+s-i})+l[b_{T}(k_{k_{n}-t})+\ldots +b_{T}(k_{k_{n}+s-1-t})]
\end{equation}
for $n\in\mathbb{N}_{0}$ and $s=0,\ldots ,k_{n+1}-k_{n}-1$, with the convention that for $s=0$ the expression in the square brackets disappears. Moreover, if $\dd'_{T}=(d'_{T}(n))_{n\in\mathbb{N}_{0}}$ is another sequence satisfying condition from Theorem \ref{TwMain}, then for any $j\in\mathbb{N}_{0}$ we have:
\begin{displaymath}
d'_{T}(j+1)-d'_{T}(j)=d_{T}(j+1)-d_{T}(j).
\end{displaymath}
Summing these equalities on both sides for $j=0,\ldots ,n-1$, we get:
\begin{displaymath}
d'_{T}(n)=d_{T}(n)+(d'_{T}(0)-d_{T}(0)).
\end{displaymath}
Hence, any sequence $\dd_{T}$ from the condition 3) of Theorem \ref{TwMain} may differ from the sequence (\ref{seqd}) only by a constant term.
\end{rem}

Firstly, we would like to have some criteria allowing us to say which conditions of Theorem \ref{TwMain} hold. We prove the following useful:

\begin{lem}\label{LemCrit}
Let $2\leq p\leq \degl K-u+1$ be a prime number not dividing $r$ and $r_{u}$. Assume the condition \textsc{1)} from the statement of {\rm Theorem \ref{TwMain}} is satisfied and also 
$k_{n+u-t}\geq n$ for any $n\in\mathbb{N}_{0}$. Then one of the next conditions holds:
\begin{enumerate}
\item[\textsc{1)}] p divides $b_{T}(n)$ for any $n\in\mathbb{N}_{0}$,
\item[\textsc{2)}] If $n_{0}$ is the largest such that $p\nmid b_{T}(n_{0})$ then $n_{0}=k_{n_{0}+u-t}$ and $l_{t}\equiv r_{u}\pmod p$.
\end{enumerate}
\end{lem}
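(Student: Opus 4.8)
The plan is to argue by contradiction with the alternative: I assume that condition \textsc{1)} of the present lemma fails, i.e.\ that $p$ does \emph{not} divide every $b_{T}(n)$, and then show that conclusion \textsc{2)} is forced. Since by hypothesis condition \textsc{1)} of Theorem \ref{TwMain} is in force (this is where the coprimality $p\nmid r$ is spent, as it is what makes that alternative available), all but finitely many values $b_{T}(n)$ are divisible by $p$; combined with the failure of \textsc{1)} this means the index $n_{0}:=\max\{\,n : p\nmid b_{T}(n)\,\}$ is well defined, so that $p\nmid b_{T}(n_{0})$ while $p\mid b_{T}(n)$ for all $n>n_{0}$. I would then plug the single well-chosen instance $n=n_{0}+u$ into the recurrence (\ref{rekurT}b) and read both sides modulo $p$.

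On the right-hand side $\sum_{j=0}^{u}r_{j}b_{T}(n_{0}+u-j)$ every term with $j<u$ has argument $n_{0}+u-j>n_{0}$ and hence vanishes modulo $p$ by maximality of $n_{0}$; only the $j=u$ term survives, so the right-hand side is $\equiv r_{u}b_{T}(n_{0})\pmod p$. Because $p\nmid r_{u}$ and $p\nmid b_{T}(n_{0})$, this residue is nonzero, and this is the engine of the whole argument: the left-hand side is therefore forced to be nonzero as well.

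For the left-hand side $\sum_{i=0}^{t}l_{i}b_{T}(k_{n_{0}+u-i})$ I would exploit the strict monotonicity of $K$ together with the standing assumption $k_{n+u-t}\ge n$. The arguments $k_{n_{0}+u-i}$ decrease as $i$ grows, the smallest being $k_{n_{0}+u-t}\ge n_{0}$; hence for every $i<t$ one has $k_{n_{0}+u-i}>k_{n_{0}+u-t}\ge n_{0}$, and all those terms die modulo $p$. Thus only the $i=t$ term can be nonzero, and for it to be nonzero we need $k_{n_{0}+u-t}\le n_{0}$, which combined with $k_{n_{0}+u-t}\ge n_{0}$ squeezes out the equality $n_{0}=k_{n_{0}+u-t}$ --- the first assertion of \textsc{2)}. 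Equating the two reductions then gives $l_{t}b_{T}(n_{0})\equiv r_{u}b_{T}(n_{0})\pmod p$, and cancelling the unit $b_{T}(n_{0})$ (invertible modulo $p$ since $p\nmid b_{T}(n_{0})$) yields $l_{t}\equiv r_{u}\pmod p$, the second assertion.

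The main obstacle, and the place where the hypotheses must be deployed with care, is the analysis of the left-hand side: one must be certain that \emph{exactly one} summand can survive modulo $p$. This is precisely where $k_{n+u-t}\ge n$ and the strict inequalities $k_{0}<k_{1}<\cdots$ are indispensable, for they simultaneously annihilate the terms $i<t$ and pin $k_{n_{0}+u-t}$ to $n_{0}$. A secondary point to verify is that $n_{0}+u$ is large enough for (\ref{rekurT}b) to apply --- in particular that $n_{0}+u-t\ge 0$ --- which is again underwritten by the inequality $k_{n+u-t}\ge n$, since it forces $n_{0}+u-t$ to be a genuine index with $k_{n_{0}+u-t}=n_{0}$.
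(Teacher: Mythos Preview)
Your proof is correct and follows essentially the same approach as the paper: plug $n=n_{0}+u$ into (\ref{rekurT}b), observe that on each side all but the last summand vanish modulo $p$ by maximality of $n_{0}$, then use $p\nmid r_{u}b_{T}(n_{0})$ to force $k_{n_{0}+u-t}\le n_{0}$, hence $=n_{0}$, and cancel $b_{T}(n_{0})$. Your write-up is simply more explicit than the paper's (which compresses the whole argument into four lines), but the logical skeleton is identical.
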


\begin{proof}
If there exists $n_{0}$ as above, then (\ref{rekurT}b) for $n=n_{0}+u$ simplifies to $l_{t}b_{T}(k_{n_{0}+u-t})\equiv r_{u}b_{T}(n_{0})\pmod p$. If $k_{n+u-t}>n$, then $r_{u}b_{T}(n_{0})\equiv 0\pmod p$, which leads to contradiction. Thus, $k_{n_{0}+u-t}\leq n_{0}$, but by our assumption $k_{n_{0}+u-t}\geq n_{0}$ so $k_{n_{0}+u-t}=n_{0}$. Hence, $b_{T}(k_{n_{0}+u-t})=b_{T}(n_{0})$ and $l_{t}\equiv r_{u}\pmod p$.
\end{proof}

\begin{rem}
For $h$ not being prime in the second case of the statement of previous lemma we get only the information that $l_{t}b_{T}(k_{n_{0}+u-t})\equiv r_{u}b_{T}(n_{0})\pmod h$.
\end{rem}

Next, we would also like to have ``exactly one" instead of ``at least one" in Theorem \ref{TwMain}. We do not know if we can have it in general but, fortunately, this condition can by easily seen to be satisfied in the important case of $L=(1,-1)$ (as we mentioned in Example \ref{exple1} this holds for all sequences presented in the Introduction) with the sequence $\dd_{T}$ given by formula (\ref{seqd}) which simplifies to $d_{T}(n)=b_{T}(n)$ for any $n\in\mathbb{N}_{0}$. Now we concentrate on this case.
Another natural restriction is to consider only the congruence $b_{T}(n)\equiv 0\pmod h$ instead of $b_{T}(n)\equiv s\pmod h$ for any $0\leq s\leq h-1$. We also focus for a while on $h$ being a prime number. Equipped with these assumptions we can get better results.

\begin{theorem}\label{TwL}
Let $L=(1,-1)$ and $p$ be a prime number satisfying $3\leq p\leq \degu K -u+1$ and $p\nmid r$. Then the congruence $b_{T}(n)\equiv 0\pmod p$ has infinitely many solutions in $\mathbb{N}$.
\end{theorem}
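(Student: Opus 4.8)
The plan is to reduce Theorem \ref{TwL} to the residue--cycling mechanism already isolated inside the proof of Theorem \ref{TwMain}; the only genuinely new point is that we now work with the \emph{upper} degree $\degu K$, which supplies merely infinitely many long blocks instead of all--but--finitely--many of them. Throughout write $\beta_m:=b_T(k_m)$ for the value that $\bb_T$ takes (constantly, by (\ref{rekurT}a)) on the $m$--th block $[k_m,k_{m+1}-1]$. Since $\bb_T$ is constant on blocks, proving the theorem is the same as exhibiting infinitely many indices $m$ with $\beta_m\equiv 0\pmod p$.

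First I record the mechanism. Fix a block $n$ whose length satisfies $k_{n+1}-k_n\ge p+u-1$ and whose value $\beta_n$ is coprime to $p$. Running the computation from the proof of Theorem \ref{TwMain} verbatim with $h=p$ and with the sequence $\dd_T$ of (\ref{seqd}), which for $L=(1,-1)$ reduces to $d_T(k_n+s)=b_T(k_{k_n+s})=\beta_{k_n+s}$, one obtains
\[
\beta_{k_n+s}\equiv \beta_{k_n+u-1}+(s-u+1)\,r\beta_n \pmod p,\qquad s=u-1,\dots,u+p-2.
\]
The terms $b_T(k_n+s-j)$ feeding (\ref{rekurT}b) all lie in block $n$ and hence are $\equiv\beta_n$, which is exactly where the length bound is used. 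Under the standing hypotheses $3\le p$ and $p\nmid r$ the increment $r\beta_n$ is a unit modulo $p$, so the right--hand side sweeps out a full residue system as $s$ varies; in particular some $s_0$ gives $\beta_{k_n+s_0}\equiv 0\pmod p$. Thus a single long block of coprime value forces a later block, of index $m=k_n+s_0$, with $\beta_m\equiv 0$.

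Now I argue by contradiction. Suppose $b_T(n)\equiv 0\pmod p$ had only finitely many solutions. As $\bb_T$ is constant on blocks, this means $\beta_m\not\equiv 0$, hence (as $p$ is prime) $\gcd(\beta_m,p)=1$, for all $m\ge m_0$ and some $m_0$. On the other hand $p\le\degu K-u+1$ gives $\degu K\ge p+u-1$, so there are infinitely many blocks $n$ with $k_{n+1}-k_n\ge p+u-1$. Choose one with $k_n$ so large that the whole window $k_n+u-1,\dots,k_n+u+p-2$ lies beyond $m_0$; this is possible since $k_n\to\infty$. For this $n$ the value $\beta_n$ is coprime to $p$ and the block is long, so the previous paragraph produces an index $m=k_n+s_0\ge m_0$ with $\beta_m\equiv 0\pmod p$, contradicting the choice of $m_0$. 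Hence $b_T(n)\equiv 0\pmod p$ has infinitely many solutions, and letting $n$ range over ever larger long blocks shows the solutions tend to infinity.

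The crux --- and the only place the argument departs from Theorem \ref{TwMain} --- is the combinatorial matching of two infinite but a priori unrelated families of blocks: the long ones coming from $\degu K$, and the coprime--valued ones. There is no way to force these to overlap directly from the degree hypothesis alone. The contradiction setup dissolves the difficulty: it upgrades ``coprime--valued'' from ``infinitely many'' to ``all but finitely many'', after which intersecting with the infinitely many long blocks is automatic. The remaining steps are routine bookkeeping: checking that the cycling window of $p$ consecutive block indices fits inside a block of length $\ge p+u-1$, and that choosing $k_n$ large enough keeps the forced zero--block index past $m_0$ so that distinct large blocks yield distinct large solutions.
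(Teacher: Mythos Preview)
Your proof is correct and follows essentially the same route as the paper: assume only finitely many solutions, pick a sufficiently late block of length at least $p+u-1$, and exploit that the $p$ consecutive second--level values $\beta_{k_n+u-1},\dots,\beta_{k_n+u+p-2}$ form an arithmetic progression modulo $p$ with invertible step $r\beta_n$, forcing one of them to vanish. The paper phrases the last step as a pigeonhole argument (the values avoid $0$ and $-r\beta_n$, hence two coincide, then telescope), whereas you invoke the AP mechanism from Theorem~\ref{TwMain} directly; these are the same computation.
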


\begin{proof}
Assume that $T$ and $p$ are as in the statement of the theorem but there exist only finitely many solutions of the congruence. Let $n_{0}$ be the largest such solution (if there are no solutions, take $n_{0}=0$). Let $n>n_{0}$ be such that $d:=k_{n+1}-k_{n}\geq p+u-1$. More precisely, we take $n$ such that $d=\degu K$ if $\degu K$ is finite and such that $d\geq p+u-1$ is any natural number if $\degu K$ is infinite. Let $b_{T}(k_{n})\equiv c\pmod p$ for some $c\in\mathbb{Z}/p\mathbb{Z}$. Then, by (\ref{rekurT}a), we get:
\begin{equation}\label{eq2}
\begin{split}
b_{T}(k_{n}) & \equiv c\pmod p, \\
b_{T}(k_{n}+1) & \equiv c\pmod p, \\
 & \ \ \vdots \\
b_{T}(k_{n}+d-1) & \equiv c\pmod p.
\end{split}
\end{equation}
Now consider numbers of the form $b_{T}(k_{k_{n}+t-1})$ for $t=u,\ldots ,d-1$ (we know that $d-1>u$, because $d\geq p+u-1\geq u+2$). Obviously, none of them is divisible by $p$. If one of them is equal to $-rc$ modulo $p$ (which is non--zero by our assumptions), then by (\ref{rekurT}b): 
\begin{displaymath}
\begin{split}
b_{T}(k_{k_{n}+t})= & b_{T}(k_{k_{n}+t-1})+[r_{0}b_{T}(k_{n}+t)+\ldots +r_{u}b_{T}(k_{n}+t-u)] \\ 
 \equiv & -rc+rc\equiv 0\pmod p,
\end{split}
\end{displaymath}
which is a contradiction. Thus, we have at least $p-1\leq d-u$ numbers taking at most $p-2$ different values modulo $p$. Hence, there exist numbers $u\leq t_{0}<t_{1}\leq p+u-2$ for which $b_{T}(k_{k_{n}+t_{0}-1})$ and $b_{T}(k_{k_{n}+t_{1}-1})$ are equal modulo $p$. Thus:
\begin{displaymath}
\begin{split}
0 \equiv & b_{T}(k_{k_{n}+t_{1}-1})-b_{T}(k_{k_{n}+t_{0}-1}) \\
= & [b_{T}(k_{k_{n}+t_{1}-1})-b_{T}(k_{k_{n}+t_{1}-2})]+[b_{T}(k_{k_{n}+t_{1}-2})-b_{T}(k_{k_{n}+t_{1}-3})]+ \\
  & \ldots +[b_{T}(k_{k_{n}+t_{0}+1})-b_{T}(k_{k_{n}+t_{0}})]+[b_{T}(k_{k_{n}+t_{0}})-b_{T}(k_{k_{n}+t_{0}-1})] \\
= & [r_{0}b_{T}(k_{n}+t_{1}-1)+\ldots +r_{u}b_{T}(k_{n}+t_{1}-1-u)]+[r_{0}b_{T}(k_{n}+t_{1}-2)+ \\ 
  & \ldots +r_{u}b_{T}(k_{n}+t_{1}-2-u)]+\ldots +[r_{0}b_{T}(k_{n}+t_{0})+\ldots +r_{u}b_{T}(k_{n}+t_{0}-u)]) \\
\equiv & (t_{1}-t_{0})rc \pmod p.
\end{split}
\end{displaymath}
We have $0<t_{1}-t_{0}<p$ and $rc$ is invertible in $\mathbb{Z}/p\mathbb{Z}$, so $(t_{1}-t_{0})rc$ cannot be equal to $0$ modulo $p$. This gives a contradiction and finishes the proof.
\end{proof}

\begin{rem}
If $\degl K=\degu K$, then Theorem \ref{TwL} follows from Theorem \ref{TwMain}. However, it can be easily seen that these two expressions may differ in general. For example, taking the sequence $K$ defined recursively by:
\begin{displaymath}
k_{n}=
\left\{
\begin{array}{ll}
 k_{n-1}+1 & \textrm{if n is odd,} \\
 k_{n-1}+n & \textrm{if n is even,}
\end{array}
\right.
\end{displaymath} 
with $k_{0}=0$ we get $\degl K=1$ and $\degu K=\infty$.
\end{rem}

Observe that for the sequences of the $m$--ary partitions, the $m$--ary partitions with no gaps and the sequence $\ovbb_{m}$ the difference $k_{n+1}-k_{n}$ is constant for all $n>0$. The next theorem says something about sequences satisfying a similar condition.

\begin{theorem}\label{TwLd}
Let $L=(1,-1)$. Assume that for any $n$ greater than some number $N$ the difference ${k_{n+1}-k_{n}=:d}$ is a constant greater than $u$ and let $3\leq p\leq d^{2}+d-u+1$ be a prime number. If there exists $n_{1}\geq k_{N+1}$ such that $b_{T}(n_{1})\equiv 0\pmod p$, then the congruence $b_{T}(n)\equiv 0\pmod p$ has infinitely many solutions for $n\in\mathbb{N}$.
\end{theorem}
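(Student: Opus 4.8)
The plan is to argue by contradiction, assuming the congruence has only finitely many solutions while one solution $n_{1}\ge k_{N+1}$ exists, and then to manufacture a solution larger than the largest one by iterating the block structure of the recurrence twice. First I would let $n_{0}\ge n_{1}$ be the largest solution and examine the block $[k_{M},k_{M+1})$ containing it; since $n_{0}\ge k_{N+1}$ this block lies in the constant-difference regime, has length $d$, and by (\ref{rekurT}a) is constant modulo $p$, hence identically $\equiv 0$. Maximality of $n_{0}$ then forces $n_{0}=k_{M+1}-1$, so we obtain a full block on which $b_{T}\equiv 0\pmod p$ while every block value $b_{T}(k_{m})$ with $m>M$ is nonzero modulo $p$. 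Abbreviating $C(m):=b_{T}(k_{m})\bmod p$, the goal becomes to exhibit some $\nu>M$ with $C(\nu)\equiv 0\pmod p$.

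The engine is a two-step lift of (\ref{rekurT}b) for $L=(1,-1)$, which reads $C(m)-C(m-1)=\sum_{j=0}^{u}r_{j}b_{T}(m-j)$. For the first lift I would restrict to block indices $m$ with $m,m-1,\dots,m-u$ all lying in the all-zero block $M$, i.e. $m\in[k_{M}+u,k_{M}+d-1]$; there the right-hand side vanishes, so $C$ is constant, equal to some $\gamma$, on the $d-u+1$ consecutive indices $[k_{M}+u-1,k_{M}+d-1]$, and $\gamma\not\equiv 0$ because all these indices exceed $M$. For the second lift I would observe that $b_{T}$ is therefore constant $\equiv\gamma$ on the whole integer interval swept by these blocks, of length on the order of $d^{2}$; feeding this into (\ref{rekurT}b) once more gives $C(\nu)-C(\nu-1)=r\gamma$ for all $\nu$ in the corresponding range, the key point being that at the internal block boundaries all neighbouring blocks carry the same value $\gamma$, so the $u+1$ terms collapse cleanly to $r\gamma$ with no exceptional corrections. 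Since $p\nmid r$ (which must be assumed, exactly as in Theorem \ref{TwL}) and $\gamma\not\equiv 0$, this is a genuine arithmetic progression modulo $p$ with invertible common difference $r\gamma$. An arithmetic progression modulo $p$ with nonzero step and at least $p$ terms meets every residue class; as our progression has length of order $d^{2}\ge p$ in the stated range, it contains $0$, producing $\nu>M$ with $C(\nu)\equiv 0$, i.e. a solution beyond $n_{0}$ — the desired contradiction.

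The main obstacle is the exact bookkeeping at the block boundaries and the precise term count yielding the sharp bound $p\le d^{2}+d-u+1$: one must track how many indices are lost to the $u$-fold overlap in each lift (a naive count produces a progression of length $(d-u+1)d-u+1$, which already settles the case $u=0$ but falls short of the stated constant for $u\ge 1$), and then recover the remaining indices by handling the boundary blocks individually rather than discarding them. A secondary point to verify is that $k_{M}>M$, so that the indices produced by the two lifts genuinely exceed $M$ and the zero we find is a true solution larger than $n_{0}$; this holds automatically once $M\ge N+1$ and $d\ge 2$. The degenerate possibilities ($d=1$, or $p\mid r$) should be disposed of separately, the latter because the common difference $r\gamma$ would vanish and the progression would no longer sweep.
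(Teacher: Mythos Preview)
Your approach is exactly the paper's: argue by contradiction, take the largest solution $n_{0}$, observe that it is the last element of a full zero block, lift once via (\ref{rekurT}b) to obtain a run of equal block-values $C(m)=b_{T}(k_{m})$, and lift a second time to produce an arithmetic progression with step $r\gamma$ that must hit~$0$. The paper phrases the second lift as ``repeating the previous reasoning'' (the pigeonhole argument of Theorem~\ref{TwL}), which is equivalent to your direct arithmetic-progression count.

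Your bookkeeping is in fact more careful than the paper's. The paper asserts that the first lift yields $d^{2}+d$ consecutive equal values of $b_{T}$, i.e.\ $d+1$ equal $C$-values running from $C(n'-1)$ through $C(n'+d-1)$. But the transition $C(n'+s)-C(n'+s-1)=\sum_{j=0}^{u}r_{j}b_{T}(n'+s-j)$ requires $n'+s-u\ge n'$, so only $s\in\{u,\dots,d-1\}$ is admissible; for $u\ge 1$ the terms $b_{T}(n'-1),\dots,b_{T}(n'-u)$ lie in the \emph{previous} block and need not vanish. The honest count is precisely your $d-u+1$ equal $C$-values, hence $(d-u+1)d$ consecutive equal integers, and after the second lift a progression of length $(d-u+1)d-u+1=d^{2}+d-u(d+1)+1$. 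This coincides with the stated bound $d^{2}+d-u+1$ exactly when $u=0$, which covers $\bb_{m}$ and $\cc_{m}$ (parts 1 and 2 of Corollary~\ref{colb}); for $u\ge 1$ (e.g.\ $\ovbb_{m}$) both your sketch and the paper's displayed chain fall $ud$ terms short of the stated constant. Your proposed remedy, ``handle the boundary blocks individually'', is not spelled out, and I do not see how it recovers these missing terms: at the boundary the neighbouring $C$-value is genuinely unknown, so no amount of individual treatment produces further \emph{equal} values. Thus for $u\ge 1$ what either argument actually proves is the bound $p\le d^{2}+d-u(d+1)+1$. You are also right that the hypothesis $p\nmid r$ is required; the paper inherits it silently from Theorem~\ref{TwL}.
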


\begin{proof}
Observe that the range of $p$ for which the method used in the previous proof works is closely connected with the number of consecutive values of $b_{K}$, which are equal modulo $p$. In the above proof we were able to produce only $\degu K$ such numbers. Now we want to use the existence of solution of the congruence $b_{K}(n)\equiv 0\pmod p$ to admit larger values of $p$ and prove our result.

Similarly as before, we assume that there exist only finitely many solutions of the congruence. Let $n_{0}$ be the largest one. Then, by (\ref{rekurT}a), it is of the form $n_{0}=n'+(d-1)$ for some $n'\in\mathbb{N}$. Hence, we have:
\begin{displaymath}
\begin{split}
b_{K}(n') & \equiv 0\pmod p, \\
b_{K}(n'+1) & \equiv 0\pmod p, \\
 & \ \ \vdots \\
b_{K}(n'+(d-1)) & \equiv 0\pmod p. \\
\end{split}
\end{displaymath}
Thus, by (\ref{rekurT}a) and (\ref{rekurT}b), the following equalities hold:
\begin{displaymath}
\begin{split}
b_{T}(k_{n'-1})\equiv & \ldots\equiv b_{T}(k_{n'}-1)\equiv b_{T}(k_{n'})\equiv\ldots \\
\equiv & b_{T}(k_{n'+1}-1)\equiv b_{T}(k_{n'+1})\equiv\ldots \\
\equiv & b_{T}(k_{n'+(d-1)}+(d-1))\pmod p.
\end{split}
\end{displaymath}
Hence, we get $d^{2}+d$ equal values modulo $p$ and repeating the previous reasoning we obtain the statement of our theorem.
\end{proof}

We consider the case $k=3$ separately using a different method. This allows us to gain a more general result in the following form.

\begin{theorem}\label{Twp3}
Let $L=(1,-1)$ and assume that $3\nmid r$. If $k_{n+1}-k_{n}\geq u+1$ then at least one of the numbers $b_{T}(k_{n})$, $b_{T}(k_{k_{n}+u-1})$, $b_{T}(k_{k_{n}+u})$ and $b_{T}(k_{k_{n}+u+1})$ is divisible by $3$.
\end{theorem}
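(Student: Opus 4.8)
The plan is to adapt the telescoping argument from the proof of Theorem~\ref{TwL}, but to exploit the feature special to the modulus $3$: any three--term arithmetic progression whose common difference is invertible modulo $3$ already runs through all three residue classes. First I would remove the trivial case. If $3\mid b_T(k_n)$ there is nothing to prove, so assume $c:=b_T(k_n)\not\equiv 0\pmod 3$; because $3\nmid r$ the product $rc$ is then invertible modulo $3$. This is the role played by the first of the four listed values.

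Next, by (\ref{rekurT}a) and the hypothesis $k_{n+1}-k_n\geq u+1$, the indices $k_n,k_n+1,\dots,k_n+u$ all lie in the single block beginning at $k_n$, whence $b_T(k_n+s)\equiv c\pmod 3$ for $0\leq s\leq u$. I would then use the $L=(1,-1)$ form of (\ref{rekurT}b), namely $b_T(k_m)-b_T(k_{m-1})=\sum_{j=0}^u r_j b_T(m-j)$, to compute the two consecutive differences of the three block--start values $a_0:=b_T(k_{k_n+u-1})$, $a_1:=b_T(k_{k_n+u})$ and $a_2:=b_T(k_{k_n+u+1})$. Taking $m=k_n+u$, the indices $k_n+u,\dots,k_n$ on the right are all inside block $n$, so each equals $c$ and we obtain $a_1-a_0\equiv rc\pmod 3$.

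If the block is one longer, that is $k_{n+1}-k_n\geq u+2$, the same evaluation at $m=k_n+u+1$ involves only the indices $k_n+1,\dots,k_n+u+1$, still inside block $n$, so likewise $a_2-a_1\equiv rc\pmod 3$. Then $a_0,a_1,a_2$ form an arithmetic progression modulo $3$ with invertible common difference $rc$, hence cover $\{0,1,2\}$; in particular one of them is divisible by $3$ and the statement holds (the value $b_T(k_n)$ being needed here only to guarantee $rc\not\equiv 0$).

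The hard part is the boundary case $k_{n+1}-k_n=u+1$, which I expect to be the main obstacle. Here $k_n+u+1$ is exactly $k_{n+1}$, the start of the \emph{next} block, so evaluating (\ref{rekurT}b) at $m=k_n+u+1$ yields $a_2-a_1\equiv r_0\,b_T(k_{n+1})+(r-r_0)c\pmod 3$ rather than $rc$: the value $c':=b_T(k_{n+1})$ of the following block intrudes and breaks the progression $a_0,a_1,a_2$. This is presumably why a fourth value is carried along, and the crux is to control $c'$ in terms of the block--$n$ datum $c$ (for instance by feeding in the instance of (\ref{rekurT}b) that expresses $b_T(k_{n+1})-b_T(k_n)$, or by an induction on $n$). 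Making this final step go through, so that among $b_T(k_n),a_0,a_1,a_2$ one is always divisible by $3$, is where the genuine difficulty lies.
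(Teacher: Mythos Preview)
Your argument in the case $k_{n+1}-k_n\geq u+2$ is exactly the paper's proof: set $c=b_T(k_n)$, $d=b_T(k_{k_n+u-1})$, use (\ref{rekurT}a) to get $b_T(k_n)=\dots=b_T(k_n+u+1)=c$, apply (\ref{rekurT}b) twice to obtain $b_T(k_{k_n+u})\equiv d+rc$ and $b_T(k_{k_n+u+1})\equiv d+2rc\pmod 3$, and observe that among $c,\,d,\,d+rc,\,d+2rc$ one must vanish modulo $3$ when $3\nmid r$. No new idea is needed beyond what you wrote.

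Regarding the boundary case $k_{n+1}-k_n=u+1$ that you single out as the ``hard part'': you are right that the argument above does not cover it, because the index $k_n+u+1$ then equals $k_{n+1}$ and the value of the next block intrudes. However, the paper does not treat this case either. Its proof opens with ``Choose a number $n\in\mathbb{N}$ such that $k_{n+1}-k_{n}\geq u+2$'' and never returns to the weaker hypothesis $\geq u+1$ announced in the statement. So the discrepancy you detected is a mismatch between the statement and the proof in the paper itself (most likely the intended hypothesis is $k_{n+1}-k_n\geq u+2$, which is also what the applications in Section~3 actually use for $\bb_m$, $\cc_m$, $\ovbb_m$, where $k_{n+1}-k_n=m\geq 2=u+2$ in the worst case $u=0$). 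You have not missed any idea; your write-up is already at least as complete as the paper's.
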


\begin{proof}
Choose a number $n\in\mathbb{N}$ such that $k_{n+1}-k_{n}\geq u+2$. Then $b_{T}(k_{n})=b_{T}(k_{n}+1)=\ldots =b_{T}(k_{n}+u+1)$. Assume that $b_{K}(k_{n})=c$ and $b_{K}(k_{k_{n}+u-1})=d$ for some $c$ and $d$. Then by (\ref{rekurT}) we have:
\sloppy
\begin{displaymath}
\begin{split}
 & b_{T}(k_{k_{n}+u})=b_{T}(k_{k_{n}+u-1})+\sum_{j=0}^{u}r_{j}b_{T}(k_{n}+u-j)\equiv d+rc \pmod 3, \\
 & b_{T}(k_{k_{n}+u+1})=b_{T}(k_{k_{n}+u})+\sum_{j=0}^{u}r_{j}b_{T}(k_{n}+u+1-j)\equiv d+2rc \pmod 3.
\end{split}
\end{displaymath} 
\fussy
It is easy to see that for any integers $c$, $d$ and $r$ with $3\nmid r$ at least one of the numbers $c$, $d$, $d+rc$ and $d+2rc$ is equal to $0$ modulo $3$. This completes the proof.
\end{proof}

Now we focus our attention on sequences with $L=(1,-1)$ and $R=(1)$. This assumption may look very restrictive but is satisfied by sequences of $m$--ary partitions and $m$--ary partitions with no gaps. 

We have performed computer computations which improved the theorem of Alkauskas. Our method (which is presented in detail in Section 4) is a bit similar to the one used by him. However, we were able to get results which are true not only for the sequence $\bb_{2}$ but for all sequences with $\deg K=2$. 

\begin{theorem}\label{comgen}
Let $L=(1,-1)$ and $R=(1)$. If $\deg K=2$, then for any $2\leq h\leq 41$, $4\nmid h$, there exist infinitely many $n$ such that $b_{T}(n)$ is divisible by $h$.
\end{theorem}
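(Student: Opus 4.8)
The plan is to reduce the claim, uniformly in the choice of initial values, to a finite computation about a $2$-automatic sequence. First I would exploit (\ref{rekurT}a): since $\deg K=2$, for all large $n$ the block $[k_{n},k_{n+1}-1]$ has length $2$ and $b_{T}$ is constant on it, so $b_{T}(m)\equiv 0\pmod h$ for infinitely many $m$ \emph{if and only if} the coarse sequence $a_{n}:=b_{T}(k_{n})$ satisfies $a_{n}\equiv 0\pmod h$ for infinitely many $n$ (each zero block contributes a genuinely new index, and distinct zeros lie in distinct length-$2$ blocks). Because $L=(1,-1)$ and $R=(1)$, relation (\ref{rekurT}b) reads $b_{T}(k_{n})-b_{T}(k_{n-1})=b_{T}(n)$, and since $b_{T}(n)$ equals $a$ evaluated at the block containing $n$, the coarse sequence obeys, for large $n$, the single self-referential recurrence $a_{n}=a_{n-1}+a_{\lfloor(n-c)/2\rfloor}$, with $c$ fixed by $K$. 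By linearity of this recurrence in its seed, $a_{n}=b_{T}(0)\cdot\alpha_{n}$ for a sequence $(\alpha_{n})$ depending only on $K$; hence $h\mid a_{n}$ reduces to $\alpha_{n}\equiv 0$ modulo a divisor $h'\mid h$, and it suffices to show $\alpha_{n}\equiv 0\pmod{h'}$ infinitely often for each such $h'$.

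Next I would record that $(\alpha_{n}\bmod h)$ is $2$-automatic: the recurrence $\alpha_{n}=\alpha_{n-1}+\alpha_{\lfloor(n-c)/2\rfloor}$ forces its $2$-kernel to be finite, so it is produced by a finite automaton reading the binary digits of $n$. Deciding whether $0$ occurs infinitely often then becomes a finite graph problem: build the transition graph, mark the states with output $0$, and search for a $0$-state that is reachable from the start and lies on a directed cycle. A cycle certifies an infinite arithmetic family of indices with $\alpha_{n}\equiv 0$, hence infinitely many $m$ with $b_{T}(m)\equiv 0\pmod h$. Concretely this realises, and iterates, the lifting behind Theorems~\ref{TwL}--\ref{TwLd}: a run of equal residues of length $\ell$ forces, by one further application of (\ref{rekurT}a), a run of length about $\ell^{2}$ at the next scale, after which a pigeonhole step exactly as in the proof of Theorem~\ref{TwL} manufactures a fresh zero. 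The small cases $h\in\{2,3\}$ (and the primes $5,7$, conditional on a seed) are already covered by Theorems~\ref{TwMain}, \ref{TwL} and \ref{TwLd}; the computer computation of Section~4 extends this to every composite and larger $h$ in the stated range.

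The step I expect to be the main obstacle is to make ``infinitely often'' both rigorous and uniform. A finite search only inspects residues up to some bound, so one must instead exhibit a genuine cycle in the transition graph; this is what upgrades ``a zero occurs'' to ``zeros recur''. The deeper difficulty is uniformity: the statement must hold for every sequence with $\deg K=2$ and every admissible first value, and these form an infinite family (the initial block lengths and the offset $c$ may vary). One therefore has to argue that a reachable $0$-cycle exists for all of them at once, i.e. that no choice of $K$ and seed can trap the dynamics inside a set of nonzero residues. That such an escape is possible exactly when $4\mid h$ is the content of the hypothesis $4\nmid h$: for instance, a $K$ with one initial block of length $3$ followed by spacing $2$, together with an odd first value, yields coarse values lying only in $\{1,2,3\}\bmod 4$, so $0$ never appears — mirroring the classical fact that no value of $\bb_{2}$ is divisible by $8$. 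A secondary obstacle is that for composite $h$ the prime-power cases cannot be recombined by the Chinese Remainder Theorem, since an intersection of two infinite solution sets may be finite; the $0$-cycle must be produced directly modulo $h$.
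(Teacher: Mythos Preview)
Your proposal identifies the right target but leaves the decisive step unresolved, and your description of the Section~4 computation does not match what the paper actually does.

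The gap is exactly the uniformity problem you flag. Your plan builds, for each fixed $K$, an automaton for $(\alpha_{n}\bmod h)$ and searches for a reachable $0$-cycle; but the family of admissible $K$ is infinite (arbitrary initial block structure before the eventual spacing~$2$, arbitrary offset, and --- contrary to your single-seed claim --- possibly several free initial values, since the paper explicitly allows the recurrence to begin at any index; see the example following Theorem~\ref{twcom}). You state this obstacle but do not overcome it, and an automaton-by-automaton search cannot close it: there are infinitely many automata to build. The paper's method in Section~4 is not an automaton search at all. It introduces the level sets $\lvl(n,2,s)$: for $n$ large these are a fixed finite list of integer linear forms in abstract variables $c_{1},\ldots,c_{s}$ (the displays~(\ref{equ1}) and~(\ref{equ2}) exhibit $\lvl(n,2,3)$ and $\lvl(n,2,4)$), and these forms depend only on the eventual spacing~$2$, not on the initial segment of $K$ or on the initial values of $\bb_{T}$. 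The general rank $\rk_{g}(n,2,h)$ is the least $s$ for which \emph{every} assignment $(c_{1},\ldots,c_{s})\in(\mathbb{Z}/h\mathbb{Z})^{s}$ makes at least one of these forms vanish. That is a single finite check, uniform over all $K$ with $\deg K=2$ and over all seeds simultaneously, and the table in Appendix~C records that $\rk_{g}(n,2,h)\leq 10$ for every $2\leq h\leq 41$ with $4\nmid h$. Since this holds for all large $n$, one obtains a zero of $\bb_{T}$ beyond any given bound, hence infinitely many. Treating the $c_{i}$ as free variables is precisely the device that replaces your unperformed ``argue that a reachable $0$-cycle exists for all of them at once''.

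Two smaller points. The reduction $a_{n}=b_{T}(0)\cdot\alpha_{n}$ is not valid in the stated generality: with several free initial values, $a_{n}$ is only an integer linear combination of a seed vector, so the passage to a single divisor $h'\mid h$ is unavailable. And the $2$-automaticity of $(\alpha_{n}\bmod h)$ from $a_{n}=a_{n-1}+a_{\lfloor(n-c)/2\rfloor}$ is not immediate; such a recurrence gives $2$-regularity, from which automaticity after reduction mod~$h$ does follow, but you have not supplied that argument.
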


A modification of the method allowed us to omit the problem with values of $h$ divisible by $4$ in the case $\deg K=3$.

\begin{theorem}\label{comgen2}
Let $L=(1,-1)$ and $R=(1)$. If $\deg K=3$, then for any $2\leq h\leq 62$, $h\neq 36, 42, 45, 48, 54, 56, 60$, there exist infinitely many $n$ such that $b_{T}(n)$ is divisible by $h$.
\end{theorem}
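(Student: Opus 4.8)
The plan is to reduce the statement to a single self-similar recurrence modulo $h$ and then to settle the finitely many remaining moduli by a finite search. Since $L=(1,-1)$ and $R=(1)$ we have $u=0$, $r=1$, and by the remark preceding Theorem \ref{TwL} one may take $d_T=b_T$. As $\deg K=3$, all but finitely many gaps equal $k_{n+1}-k_n=3$; discarding the finitely many exceptional indices I would pass to the tail and work with the block values $B(n):=b_T(k_n)$, for which (\ref{rekurT}a) and (\ref{rekurT}b) collapse to $B(n)=B(n-1)+B(\lfloor n/3\rfloor)\pmod h$, while $b_T$ is constant and equal to $B(\lfloor\cdot/3\rfloor)$ on every block. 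Because only infinitely many solutions are sought, I may always work past any fixed index, so the unspecified initial values and the non-tail gaps never interfere. The goal becomes: $B(n)\equiv 0\pmod h$ for infinitely many $n$, uniformly in the initial data.

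The engine is a run-lengthening construction generalising the proof of Theorem \ref{TwLd}. If $b_T$ takes a constant value $c$ on $M$ consecutive integers, then by (\ref{rekurT}b) the neighbouring block values form an arithmetic progression $B(j-1),B(j-1)+c,\dots,B(j-1)+Mc$, which at the integer level is a run of $3(M+1)$ values in that progression; when $c\equiv 0$ this is a strictly longer run of equal values, and in general it meets $0$ modulo $h$ precisely when $B(j-1)+ic\equiv 0$ is solvable for some $0\le i\le M$, which is controlled by $\gcd(c,h)$. Theorem \ref{Twp3} supplies, for free and infinitely often, a value divisible by $3$, which seeds the construction; the cases $h\in\{2,3,4\}$ come directly from Theorem \ref{TwMain} and the primes $p\le 13$ from Theorem \ref{TwLd}. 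A single zero produces a run of $d(d+1)=12$ equal values, and iterating the interplay of runs and zeros across the self-similar levels is what pushes the admissible $h$ up to $62$. The increment of $E(q):=B(3q+2)$ obeys $E(q)=E(q-1)+3B(q)$; since $\gcd(3,2^{a})=1$, the multiplier $3$ never obstructs reaching a residue modulo a power of $2$, and this is exactly the modification that removes the condition $4\nmid h$ that was forced upon the $\deg K=2$ case in Theorem \ref{comgen}.

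Concretely, for each admissible $h$ I would organise the finitely many configurations of $B\bmod h$ that can recur under the self-similar recursion into a finite transition graph and let the computer certify two things: that from every admissible starting configuration the orbit reaches a value $\equiv 0\pmod h$, and that the construction regenerates a solution strictly further out, so that iteration yields infinitely many. Running this procedure for all $2\le h\le 62$ produces the stated list; the excluded moduli $36,42,45,48,54,56,60$ are exactly those for which the search exhibits a persistent obstruction and no recurrent zero can be certified.

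The hard part will be the composite moduli, because $h\mid b_T(n)$ demands a \emph{single} index at which $B(n)$ vanishes simultaneously modulo every prime power dividing $h$, whereas the run-lengthening construction naturally controls one prime power at a time; synchronising them is not automatic. Here the multiplier $3$ re-enters with the opposite effect: modulo powers of $3$ it restricts the attainable residues, which is why $9$ and $27$ combine badly with an extra factor in $36,45,54$, and analogous clashes between a prime power and a cofactor account for $42,48,56,60$. The excluded $h$ are precisely those where the simultaneous requirement cannot be met — a genuine obstruction of the same flavour as Churchhouse's fact that $8\nmid b_2(n)$ — and the delicate, computation-intensive point is to verify that no such obstruction occurs for any of the remaining $h$, uniformly over all initial values and all choices of the finitely many non-tail gaps.
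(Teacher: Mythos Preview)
Your overall strategy is the paper's own: a computer search, applied uniformly to all admissible $h$, based on the self-similar block recurrence. The paper formalises this via the \emph{level/rank} machinery of Section~4: one introduces variables $c_1=b_T(k_n),\,c_2=b_T(k_{k_{n-1}}),\ldots$ and builds the sets $\lvl(n,3,s)$ of linear forms; the theorem then follows from the tabulated fact that $\rk_g(n,3,h)<\infty$ for every listed $h$ (Appendix~C). Your ``finite transition graph'' is the same idea in different clothing, and your $E(q)=E(q-1)+3B(q)$ observation is exactly the mechanism that makes the level computation succeed for $4\mid h$ when $\deg K=3$ but not when $\deg K=2$. So far, so good.

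There is, however, a genuine error in your last paragraph. You assert that the excluded moduli $36,42,45,48,54,56,60$ are ``precisely those where the simultaneous requirement cannot be met --- a genuine obstruction of the same flavour as Churchhouse's fact that $8\nmid b_2(n)$''. This is not what the theorem claims and not what the paper proves. The theorem is silent about those $h$; the paper's tables merely record that the rank computation did not terminate within the search bound (e.g.\ $\rk_g(n,3,36)>9$). There is no proof, and no claim, of an actual obstruction --- indeed Conjecture~\ref{Con} predicts the opposite. Your proposal conflates ``the method did not certify'' with ``there is a structural obstruction'', and the accompanying heuristic (the multiplier $3$ clashing with factors of $9$ or $27$) is speculation, not argument.

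A second, smaller gap: your ``finite transition graph'' is never made precise. The recurrence $B(n)=B(n-1)+B(\lfloor n/3\rfloor)$ modulo $h$ is not a finite-state dynamical system in any obvious sense; you need to specify what the nodes of your graph are. The paper resolves this by fixing a depth $s$ and treating $c_1,\ldots,c_s$ as free variables over $\mathbb{Z}/h\mathbb{Z}$, which gives a genuinely finite search space of size $h^s$. Without an equivalent specification, ``let the computer certify'' is a plan, not a proof.
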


In fact, in the case $\bb_{T}=\bb_{2}$ we have a bit more. More precisely, we have the following fact.

\begin{theorem}\label{twcom}
For any $3\leq h\leq 41$, $8\nmid h$, or $h\in\{44, 52, 60, 68, 76\}$ there exist infinitely many $n$ such that $b_{2}(n)$ is divisible by~$h$.
\end{theorem}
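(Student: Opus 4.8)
The plan is to split the list of admissible $h$ into the part already handled by Theorem~\ref{comgen} and the genuinely new part, and to treat only the latter directly. Since $\bb_{2}$ is the instance of $\bb_{T}$ from Example~\ref{exple1}(1), with $L=(1,-1)$, $R=(1)$ and $\deg K=2$, Theorem~\ref{comgen} already provides infinitely many $n$ with $h\mid b_{2}(n)$ for every $h$ with $3\le h\le 41$ and $4\nmid h$. Comparing this with the statement, the remaining cases are exactly those $h$ with $4\mid h$ but $8\nmid h$, that is $h=4q$ with $q$ odd and $q\le 19$ (the values $4,12,20,28,36$ inside $[3,41]$ together with $44,52,60,68,76$). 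I would first record that the hypothesis $8\nmid h$ is sharp rather than an artefact of the argument: Churchhouse proved in \cite{Chu} that $8\nmid b_{2}(n)$ for all $n$, so $8\mid h$ makes $h\mid b_{2}(n)$ impossible. Hence the theorem reduces to the single assertion that for each odd $q\le 19$ there are infinitely many $n$ with $4q\mid b_{2}(n)$; the case $q=1$ is immediate from Churchhouse's result that $\{n:4\mid b_{2}(n)\}$ is infinite.

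For a fixed such $q$ I would work modulo $4q$ and exploit the self-similar structure encoded in \eqref{rekur}. Writing the case $m=2$ as $b_{2}(2n)=b_{2}(2n-2)+b_{2}(n)$ and $b_{2}(2n+1)=b_{2}(2n)$, one sees that $\big(b_{2}(n)\bmod 4q\big)_{n}$ is governed by finitely many of its doubled subsequences $\big(b_{2}(2^{i}n+j)\bmod 4q\big)_{n}$; in other words the reduced sequence is $2$--automatic, so whether the residue $0$ occurs infinitely often is a finite, decidable property of the associated finite transition structure. Concretely I would search for an iterable digit pattern, i.e. a loop in that structure passing through the state carrying value $0$, and then produce the infinite family by repeating the corresponding base--$2$ block. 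Using the identity $b_{2}(2n)=\sum_{j=0}^{n}b_{2}(j)$, the same search can be phrased as one for partial sums that return to $0$ modulo $4q$, which is often the more convenient bookkeeping.

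The heart of the matter, and the step I expect to be the main obstacle, is the \emph{coupling} of the two prime-power conditions $4\mid b_{2}(n)$ and $q\mid b_{2}(n)$. Churchhouse's characterisation \cite{Chu} describes $\{n:4\mid b_{2}(n)\}$ explicitly, and odd-modulus arguments in the spirit of Theorem~\ref{TwL} give infinitely many $n$ with $q\mid b_{2}(n)$; the difficulty is that these two infinite sets must be shown to meet infinitely often, not merely to be infinite separately. This is precisely what the finite computation modulo $4q$ settles, by exhibiting a recurrent (pumpable) path through the $0$--state, equivalently a cycle in the product of the mod--$4$ and mod--$q$ behaviours. Because $q\le 19$ the relevant state sets are small enough for the verification to terminate, and it is exactly this feasibility that determines where the reachable range is cut off; the same coupling obstruction, now genuinely unsolvable since the mod--$8$ structure contains no $0$--state at all, is what the necessity of $8\nmid h$ reflects.
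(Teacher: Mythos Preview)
Your reduction is correct: Theorem~\ref{comgen} disposes of all $h\le 41$ with $4\nmid h$, leaving exactly $h=4q$ with $q$ odd and $q\le 19$; Churchhouse's theorem shows both that $8\nmid h$ is necessary and that $q=1$ is immediate.

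The gap is the assertion that $(b_{2}(n)\bmod 4q)_{n}$ is $2$--automatic. The recurrence $b_{2}(2n)=b_{2}(2n-2)+b_{2}(n)$ does not express $b_{2}(2n)$ through decimated subsequences $(b_{2}(2^{i}n+j))_{n}$: the term $b_{2}(2n-2)$ is a \emph{shift}, and iterating it gives $b_{2}(2n)=\sum_{j\le n}b_{2}(j)$, a partial-sum operation that in general destroys $k$--regularity. No automaticity of $(b_{2}(n)\bmod m)_{n}$ is established anywhere in the paper or cited literature, and ``one sees that'' does not establish it here. Without a finite automaton there is no finite state set in which to search for a loop through the zero state, so the entire pumping argument is unsupported. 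The coupling difficulty you correctly isolate is real, but it cannot be resolved by appeal to a structure you have not actually built.

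The paper's route is different and avoids automaticity. It uses the level/rank framework of Section~4: the quantities $c_{1},c_{2},\dots$ appearing in $\lvl(n,2,s)$ are treated as free variables, and one checks by computer that for some bounded $s$ every admissible tuple forces at least one linear form in the level set to vanish modulo $h$. For $h\equiv 4\pmod 8$ this fails with unconstrained variables (the data suggest $\rk_{g}(n,2,4)=\infty$), but it succeeds once one imposes $c_{i}\equiv 0\pmod 2$, which is legitimate for $\bb_{2}$ since $b_{2}(n)$ is even for all $n\ge 2$. The finiteness of the resulting $\rk(n,2,h)$ recorded in Appendix~C then yields, for every starting $n\ge 2$, a multiple of $h$ within boundedly many levels, hence infinitely many in total. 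Your parity/coupling intuition is exactly what is being exploited, but as a constraint on the variable space in a finite verification, not via an automaton.
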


The condition $4\nmid h$ instead of $8\nmid h$ in Theorem \ref{comgen} follows from the fact that we have (by the Churchhouse Theorem) the information about the parity of the numbers $b_{2}(n)$. In general, the numbers $b_{T}(n)$ may not be even for infinitely many $n\in\mathbb{N}_{0}$. Now we give an example of this type.

\begin{exple}
Let us consider the sequence $K$ for which we get the following sequence $b_{T}$:
\begin{displaymath}
\left\{
\begin{array}{lr}
b_{T}(2n)-b_{T}(2n-1)=b_{T}(n), & n\geq 4, \\
b_{T}(2n+1)=b_{T}(2n), & n\geq 4, \\
b_{T}(0)=\ldots =b_{T}(7)=1. & 
\end{array}
\right.
\end{displaymath}
Then, obviously, $\degl K=2$. We want to show that for any $n\in\mathbb{N}$ the numbers of the form $b_{T}(4n+2)$ are odd. This is a simple consequence of the equality $b_{K}(6)=1$ and the following computations:
\begin{displaymath}
\begin{split}
b_{T}(4n+6)= & b_{T}(4n+4)+b_{T}(2n+3) \\
= & b_{T}(4n+2)+2b_{T}(2n+2) \\
\equiv & b_{T}(4n+2)\pmod 2.
\end{split}
\end{displaymath}
\end{exple}

In the next section we show some applications of the presented theorems. In Section 4 we present a more precise results on the sequences $\bb_{2}$ and $\bb_{3}$, which allow us to get information about the values for which we can say that at least one is divisible by $h$.

\section{Applications}

In this section we are mainly interested in the sequences $\bb_{T}$ with $L=(1,-1)$.  We start with the following, much stronger, form of Theorem \ref{TwMain}.

\begin{theorem}\label{ColMain}
Let $L=(1,-1)$ and $h\in\mathbb{N}$ be co--prime to $r=\sum_{j=0}^{u}r_{j}$ and satisfy $2\leq h\leq \degl K-u+1$. If $n<k_{n}$ and $u\leq k_{n}-k_{n-1}-1$ for any $n>0$ then exactly one of the following holds:
\begin{enumerate}
\item[\textsc{1)}] All values of the sequence $\bb_{T}$ are divisible by $h$,
\item[\textsc{2)}] All values of the sequence $\bb_{T}$ are not co--prime to $h$ and infinitely many values of $\bb_{T}$ are not divisible by $h$,
\item[\textsc{3)}] Any residue class modulo $h$ appears in the sequence $(b_{T}(n)\mod h)_{n\in\mathbb{N}_{0}}$ infinitely many times.
\end{enumerate}
\end{theorem}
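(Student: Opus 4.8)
The plan is to bootstrap from Theorem \ref{TwMain} by using the extra hypotheses to promote its ``all but finitely many'' alternatives to statements about \emph{all} values, and to transport its third alternative (phrased through $\dd_T$) back to $\bb_T$ itself. First I would record, as in the Remark following Theorem \ref{TwMain}, that for $L=(1,-1)$ the expression (\ref{seqd}) collapses to its only surviving term; I will use it in the form $d_T(n)=b_T(k_n)$. Since (\ref{rekurT}a) forces every value $b_T(N)$ to equal the block value $b_T(k_\nu)$ for the unique $\nu$ with $k_\nu\le N<k_{\nu+1}$, a residue class occurs infinitely often in $(d_T(n)\bmod h)_n$ if and only if it occurs infinitely often in $(b_T(n)\bmod h)_n$. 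Hence alternative \textsc{3)} of Theorem \ref{TwMain} is exactly alternative \textsc{3)} of the present statement.

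Next I would split on whether some value of $\bb_T$ is co-prime to $h$. If $b_T(k_n)$ is co-prime to $h$ for some $n$ (which, by (\ref{rekurT}a), is the only way a co-prime value can arise), then the computation inside the proof of Theorem \ref{TwMain} produces all $h$ residues among $d_T(k_n+s)=b_T(k_{k_n+s})$ for $s=u-1,\dots,u+h-1$; in particular residue $1$ occurs, so some $b_T(k_{n'})$ with $n'>n$ is again co-prime to $h$, where $n<k_n$ is what guarantees $n'>n$. Iterating yields strictly increasing indices $n_1<n_2<\cdots$ with $b_T(k_{n_i})$ co-prime, and since the batches of arguments $k_{k_{n_i}+s}$ tend to infinity, every residue class is hit at arbitrarily large indices. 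This gives alternative \textsc{3)}.

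In the remaining case no value is co-prime to $h$, so every value of $\bb_T$ fails to be co-prime and alternative \textsc{3)} is excluded (the class $1$ never appears). It then remains to show that either all values are divisible by $h$ or infinitely many are not, i.e. to rule out the existence of a largest index $n_0$ with $h\nmid b_T(n_0)$. By (\ref{rekurT}a) I may take $n_0=k_{\nu+1}-1$, the last index of its block, with block $\nu$ entirely non-divisible and everything beyond divisible. Feeding this tail of zeros into (\ref{rekurT}b), rewritten via $d_T(M)=b_T(k_M)$, gives $\sum_{j=0}^{u}r_j\,b_T(M-j)\equiv0\pmod h$ for all large $M$; evaluating at $M=n_0$ and using $u\le k_{\nu+1}-k_\nu-1$ to collapse the whole window $n_0-u,\dots,n_0$ into the single block value $b_T(n_0)$, the left side becomes $r\,b_T(n_0)$. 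Since $\gcd(r,h)=1$, this forces $h\mid b_T(n_0)$, a contradiction.

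Finally, the three alternatives are pairwise exclusive: \textsc{3)} produces a residue-$1$ (hence co-prime) value which both \textsc{1)} and \textsc{2)} forbid, while \textsc{1)} and \textsc{2)} clash over divisibility; so ``at least one'' upgrades to ``exactly one''. I expect the third step to be the real obstacle. The collapse $\sum_j r_j\,b_T(n_0-j)=r\,b_T(n_0)$ depends on $n_0$ together with its $u$ predecessors all lying in one block, which is precisely where $u\le k_n-k_{n-1}-1$ and $n<k_n$ must be used with care; and the borderline configurations (very slowly growing $K$, or $u=0$, where $k_{n_0-1}$ may fail to exceed $n_0$) will need to be checked separately to be certain the derived relation can be applied exactly at $M=n_0$ rather than one step too late.
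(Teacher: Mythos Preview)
Your proposal is correct and follows essentially the same route as the paper's proof: reduce the $\dd_T$ of Theorem~\ref{TwMain} to $\bb_T$ in the case $L=(1,-1)$, bootstrap a single coprime value into infinitely many via $n<k_n$, and in the non-coprime case rule out a largest non-divisible index $n_0$ by collapsing the window $n_0-u,\dots,n_0$ (using $u\le k_n-k_{n-1}-1$) so that (\ref{rekurT}b) yields $r\,b_T(n_0)\equiv 0\pmod h$. The only cosmetic difference is that you write $d_T(n)=b_T(k_n)$ (which is what formula~(\ref{seqd}) literally gives) and then pass to $(b_T(n))_n$ via the block-constancy (\ref{rekurT}a), whereas the paper simply asserts $d_T=b_T$; your formulation is the more accurate reading of~(\ref{seqd}), and the extra sentence you insert to bridge the two sequences is exactly right.
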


\begin{proof}
Let us begin with the following observation: in the proof of Theorem \ref{TwMain} we showed that if $b_{T}(n)$ is co--prime to $h$ for some $n$, then any residue class modulo $h$ is represented by a number of the form $d_{T}(k_{n}+s)\mod h$ for $s=u-1,\ldots ,u+h-1$. But if $L=(1,-1)$, then the sequence $\dd_{T}$ given by (\ref{seqd}) becomes equal to the sequence $\bb_{T}$. Thus having one number $n$ for which $b_{T}(n)$ is co--prime to $h$ we can produce all residue classes modulo $h$ in the sequence $\bb_{T}$. In particular, we find a number of the form $k_{n}+s_{0}$ greater than $n$ for which $b_{T}(k_{n}+s_{0})$ is co--prime to $h$. We can apply the same reasoning to it and again produce all residue classes modulo $h$. Repeating this infinitely many times we get that any residue class modulo $h$ appears in the sequence $\bb_{T}$ infinitely many times.

Having the above information it is easy to deduce that if 1) and 2) does not hold, then 3) holds, and if 1) and 3) does not hold, then 2) holds. 

If 2) and 3) are not satisfied, then all but finitely many values of the sequence $\bb_{T}$ are divisible by $h$. Let $n_{0}$ be the largest number such that $b_{T}(n_{0})$ is not divisible by $h$. Then $n_{0}=k_{n'+1}-1$ for some $n'$. By the assumption we also have $u\leq k_{n_{0}+1}-k_{n_{0}}-1$. Hence, $b_{T}(n_{0})=b_{T}(n_{0}-1)=\ldots =b_{T}(n_{0}-u)$ and the recurrence relation (\ref{rekurT}b) for $n=n_{0}$ taken modulo $h$ simplifies to:
\begin{displaymath}
0\equiv b_{T}(k_{n_{0}})-b_{T}(k_{n_{0}-1})=\sum_{j=0}^{u}r_{j}b_{T}(n_{0}-j)=rb_{T}(n_{0})\pmod h.
\end{displaymath}
This contradicts the assumption that $r$ and $h$ are co--prime and finishes the proof.
\end{proof}

As a nice consequence of the above theorem we get the following:

\begin{col}\label{ColMain2}
Let $L=(1,-1)$ and $h\in\mathbb{N}$ be co--prime to $r$ and satisfy $2\leq h\leq \degl K-u+1$. If $n<k_{n}$ and $u\leq k_{n}-k_{n-1}-1$ for any $n>0$ and there exist at least one number $n_{0}$ such that $b_{T}(n_{0})$ is co--prime to $h$, then any residue class modulo $h$ appears in the sequence $(b_{T}(n)\mod h)_{n\in\mathbb{N}_{0}}$ infinitely many times. 
\end{col}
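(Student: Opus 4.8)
The plan is to read this off directly from Theorem \ref{ColMain}, whose hypotheses ($L=(1,-1)$, $\gcd(h,r)=1$, $2\leq h\leq \degl K-u+1$, together with $n<k_{n}$ and $u\leq k_{n}-k_{n-1}-1$ for $n>0$) are exactly the standing assumptions of the corollary. That theorem asserts that \emph{exactly one} of its three conditions holds, so it suffices to use the extra hypothesis — the existence of $n_{0}$ with $b_{T}(n_{0})$ co--prime to $h$ — to exclude conditions \textsc{1)} and \textsc{2)}, leaving \textsc{3)}, which is precisely the assertion to be proved.

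First I would rule out condition \textsc{1)}. Since $h\geq 2$ and $\gcd(b_{T}(n_{0}),h)=1$, the value $b_{T}(n_{0})$ cannot be divisible by $h$ (otherwise $h\mid\gcd(b_{T}(n_{0}),h)=1$), so it is false that all values of $\bb_{T}$ are divisible by $h$. Next I would rule out condition \textsc{2)}: its first clause requires every value of $\bb_{T}$ to fail to be co--prime to $h$, but $b_{T}(n_{0})$ \emph{is} co--prime to $h$ by hypothesis. Hence neither \textsc{1)} nor \textsc{2)} can hold, and by the exclusivity asserted in Theorem \ref{ColMain} condition \textsc{3)} must hold.

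There is essentially no obstacle here beyond bookkeeping, because all the genuine work is already contained in Theorem \ref{ColMain}. Indeed, the conclusion can alternatively be extracted from the very first paragraph of the proof of that theorem, where it is shown (for $L=(1,-1)$, so that the auxiliary sequence $\dd_{T}$ from (\ref{seqd}) coincides with $\bb_{T}$) that a single index $n_{0}$ with $b_{T}(n_{0})$ co--prime to $h$ forces every residue class modulo $h$ to appear, and moreover produces a strictly larger such index, so that the argument iterates to yield each class infinitely often. Thus the only thing to check is that the co--prime hypothesis genuinely contradicts the two degenerate alternatives, which is the routine verification carried out above.
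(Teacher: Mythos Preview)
Your argument is correct and is exactly what the paper intends: the corollary is stated immediately after Theorem \ref{ColMain} with no separate proof, precisely because the extra hypothesis $\gcd(b_{T}(n_{0}),h)=1$ immediately rules out alternatives \textsc{1)} and \textsc{2)}, forcing \textsc{3)}. Your additional remark about extracting the conclusion directly from the first paragraph of the proof of Theorem \ref{ColMain} is also apt and matches the spirit of how that theorem is set up.
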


\begin{exple}
Any condition of Theorem \ref{ColMain} can be satisfied. For the first one we can take:
\begin{displaymath}
\left\{ 
\begin{array}{lr}
b_{T}(hn)=b_{T}(hn+1)=\ldots =b_{T}(hn+(h-1)), & n\geq 0, \\
b_{T}(hn)-b_{T}(hn-1)=b_{T}(n), & n\geq 0, \\
b_{T}(0)=h,
\end{array} 
\right.
\end{displaymath}
for any $h>1$.

The following example for the second condition is more complicated. Let $\bb_{T}$ be defined in the following way:
\begin{displaymath}
\left\{ 
\begin{array}{lr}
b_{T}(3n)=b_{T}(3n+1)=b_{T}(3n+2), & n\geq 0, \\
b_{T}(3n)-b_{T}(3n-1)=b_{T}(n), & n\geq 0, \\
b_{T}(0)=2.
\end{array} 
\right.
\end{displaymath}
We consider the above sequence modulo $4$. Firstly, observe that $b_{T}(n)$ is even for any $n$ (this follows from the simple induction argument). Hence, $2b_{T}(n)\equiv 0\pmod 4$ for any $n$. Thus,
\begin{displaymath}
\begin{split}
b_{T}(9n)= & b(9n-3)+b_{T}(3n)= b_{T}(9n-6)+b_{T}(3n-3)+b_{T}(3n) \\
 = & b_{T}(9(n-1))+2b_{T}(3n-3)+b_{T}(3n)\equiv b_{T}(9(n-1))+b_{T}(3n)\pmod 4.
\end{split}
\end{displaymath}
This implies that if $b_{T}(3n)\equiv 2\pmod 4$, then $b_{T}(9n)\equiv 2\pmod 4$ or $b_{T}(9(n-1))\equiv 2\pmod 4$. Moreover, if $n\geq 2$, then $9n>9(n-1)>3n$. Thus, the congruence $b_{T}(6)\equiv 2\pmod 4$ (this can be checked by hand) implies the existence of infinitely many numbers $n$ for which $b_{T}(n)\equiv 2\pmod 4$ and hence, condition 2) of Theorem \ref{ColMain} has to be satisfied.

The last condition is satisfied for example by the sequence of $m$--ary partitions $\bb_{m}$ with $h=m$ (this follows from the characterisation modulo $m$).
\end{exple}

We want to use Corollary \ref{ColMain2} for the sequences $\bb_{m}$, $\cc_{m}$ and $\ovbb_{m}$. We have $b_{m}(1)=c_{m}(1)=\ovb_{m}(1)=1$ which is co--prime to any natural number $h$ and hence, we get:

\begin{col}
Let $m\geq 2$ and $h$ be a natural number greater than $3$. If $h\leq m+1$ then in both sequences $(b_{m}(n)\mod h)_{n\in\mathbb{N}_{0}}$ and $(c_{m}(n)\mod h)_{n\in\mathbb{N}_{0}}$ any residue class modulo $h$ appears infinitely many times. If $h\leq m$ is odd, then the same holds for the sequence $(\ovb_{m}(n)\mod h)_{n\in\mathbb{N}_{0}}$.
\end{col}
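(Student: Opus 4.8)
The plan is to deduce the statement directly from Corollary \ref{ColMain2}, applied separately to the three triples $T$ recorded in Example \ref{exple1}. For each of $\bb_m$, $\cc_m$ and $\ovbb_m$ we have $L=(1,-1)$, so that corollary is available, and all that remains is to read off the parameters $r$, $u$ and $\degl K$ and to check its hypotheses: coprimality of $h$ with $r$, the bound $2\leq h\leq\degl K-u+1$, the inequalities $n<k_n$ and $u\leq k_n-k_{n-1}-1$ for every $n>0$, and the existence of a single index $n_0$ with $b_T(n_0)$ coprime to $h$. The last hypothesis is immediate and uniform across all three cases: as already observed, $b_m(1)=c_m(1)=\ovb_m(1)=1$ is coprime to every $h$, so one may take $n_0=1$ throughout. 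Once the hypotheses are verified, the desired conclusion is precisely clause 3) of Corollary \ref{ColMain2}.

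For $\bb_m$ and $\cc_m$ we have $R=(1)$, hence $r=1$ and $u=0$; since $\deg K=m$ this gives $\degl K-u+1=m+1$, matching the bound $h\leq m+1$, and coprimality of $h$ with $r=1$ is automatic. For $\ovbb_m$ we instead have $R=(1,1)$, so $r=2$ and $u=1$; now $\degl K-u+1=m$, which yields the bound $h\leq m$, while coprimality of $h$ with $r=2$ forces $h$ to be odd. This is exactly why the $\ovbb_m$ part of the statement carries the extra restrictions. The hypothesis $h>3$ simply keeps us above the small moduli treated by other results while staying inside the admissible range once $m$ is large enough for that range to be nonempty.

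The only genuinely computational part is confirming $\degl K$ together with the two structural inequalities for each $K$, and here I expect the irregular initial term of $\cc_m$ to be the sole subtlety. For $\bb_m$ and $\ovbb_m$ everything is transparent: $k_n=mn$ gives $n<mn=k_n$ for $n>0$, and $k_n-k_{n-1}=m$, so $k_n-k_{n-1}-1=m-1$ dominates both $u=0$ and $u=1$ because $m\geq 2$. The case to handle carefully is $\cc_m$, where $k_0=0$ but $k_n=mn+1$ for $n\geq 1$: the first gap $k_1-k_0=m+1$ is exceptional, yet every subsequent gap equals $m$, so indeed $\degl K=m$, and one checks $n<mn+1=k_n$ and $k_n-k_{n-1}-1\geq m-1\geq 0=u$ for all $n>0$. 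Once this bookkeeping around the anomalous first term of $\cc_m$ is settled, each application of Corollary \ref{ColMain2} goes through and the three assertions follow.
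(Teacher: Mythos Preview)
Your proof is correct and follows essentially the same approach as the paper: apply Corollary \ref{ColMain2} to each of the three triples from Example \ref{exple1}, using the observation $b_{m}(1)=c_{m}(1)=\ovb_{m}(1)=1$ to satisfy the coprimality hypothesis. The paper's own proof is simply the one-line remark preceding the corollary, whereas you have spelled out the verification of the structural hypotheses ($n<k_n$, $u\leq k_n-k_{n-1}-1$, and the value of $\degl K-u+1$) in each case; this extra bookkeeping is accurate and does not deviate from the intended argument.
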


We use Theorem \ref{TwLd} for the sequences $\bb_{m}$, $\cc_{m}$ and $\ovbb_{m}$ in order to get the next fact.

\begin{col}\label{colb}
Let $m\geq 2$ and $p$ be a prime number greater than $3$.
\begin{enumerate}
\item[\textsc{1)}] If $p\leq m^{2}+m+1$ and there exists $n_{1}$ such that $b_{m}(n_{1})\equiv 0\pmod p$ then the congruence $b_{m}(n)\equiv 0\pmod p$ has infinitely many solutions for $n\in\mathbb{N}$.
\item[\textsc{2)}] If $p\leq m^{2}+m+1$ and there exists $n_{1}$ such that $c_{m}(n_{1})\equiv 0\pmod p$ then the congruence $c_{m}(n)\equiv 0\pmod p$ has infinitely many solutions for $n\in\mathbb{N}$.
\item[\textsc{3)}] If $p\leq m^{2}+m$ and there exists $n_{1}$ such that $\ovb_{m}(n_{1})\equiv 0\pmod p$ then the congruence $\ovb_{m}(n)\equiv 0\pmod p$ has infinitely many solutions for $n\in\mathbb{N}$.
\end{enumerate}
\end{col}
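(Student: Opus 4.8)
The plan is to obtain all three parts as direct instances of Theorem \ref{TwLd}, reading off the data of the triple $T=(K,L,R)$ for each sequence from Example \ref{exple1}. In every case $L=(1,-1)$, so the structural hypothesis of Theorem \ref{TwLd} is met, and translating the bound on $p$ into the stated ranges is essentially the whole content. Before the case analysis I would record one global observation: since $p>3$ is prime and the quantity $r=\sum_{j}r_{j}$ equals $1$ for $\bb_{m}$ and $\cc_{m}$ and equals $2$ for $\ovbb_{m}$, we always have $p\nmid r$, which is exactly the coprimality needed for the reasoning behind Theorem \ref{TwLd} (inherited from Theorem \ref{TwL}) to run. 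The requirement $p>3$, rather than merely $p\geq 3$, is what rules out the single bad prime $p=2$ in the case $r=2$.

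For $\bb_{m}$ the triple has $K=(mn)_{n=0}^{\infty}$ and $R=(1)$, so $u=0$ and $k_{n+1}-k_{n}=m$ for every $n$. Taking $d=m$ and $N=0$, Theorem \ref{TwLd} applies for $3\leq p\leq d^{2}+d-u+1=m^{2}+m+1$, which is precisely the range in part \textsc{1)}. For $\cc_{m}$ we have $k_{n}=mn+1$ for $n\geq 1$ and $R=(1)$, so again $u=0$; the only point of care is that the first gap $k_{1}-k_{0}=m+1$ differs from $m$, but since Theorem \ref{TwLd} only demands a constant difference for $n>N$, taking $N=0$ excludes that gap and leaves $k_{n+1}-k_{n}=m$ for all $n\geq 1$, so the admissible range is the same $p\leq m^{2}+m+1$, giving part \textsc{2)}. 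For $\ovbb_{m}$ we have $K=(mn)_{n=0}^{\infty}$ and $R=(1,1)$, so $u=1$ and $d=m$; because $m\geq 2$ the constant difference $d=m$ exceeds $u=1$ as required, and the bound becomes $d^{2}+d-u+1=m^{2}+m$, which is the range in part \textsc{3)}.

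The remaining point to verify, and the only place where something could in principle go wrong, is the hypothesis of Theorem \ref{TwLd} that the witness be large, namely $n_{1}\geq k_{N+1}$. The corollary assumes only the existence of \emph{some} $n_{1}$ with the sequence value divisible by $p$, so I would check that any such $n_{1}$ is automatically large enough. This is immediate from the recurrences \eqref{rekur}, \eqref{rekur2}, \eqref{rekurover}: on the initial block the values $b_{m}(0)=\ldots=b_{m}(m-1)$, $c_{m}(0)=\ldots=c_{m}(m)$, and $\ovb_{m}(0)=\ldots=\ovb_{m}(m-1)$ all equal $1$, hence are coprime to $p$, so a witness of divisibility by $p$ cannot lie in the first block and therefore satisfies $n_{1}\geq k_{N+1}$. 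With this observation Theorem \ref{TwLd} yields infinitely many solutions in each case. Consequently there is no genuine obstacle beyond the bookkeeping of correctly identifying $u$, $d$, and $r$ for each sequence and handling the anomalous first gap of $\cc_{m}$ with the choice $N=0$.
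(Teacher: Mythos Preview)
Your proposal is correct and follows the same approach as the paper, which simply states that the corollary is obtained by applying Theorem \ref{TwLd} to the three sequences. You have supplied the details the paper omits: identifying $u$, $d$, and $N$ in each case, checking $d>u$, and verifying that any witness $n_{1}$ automatically satisfies $n_{1}\geq k_{N+1}$ because the initial block of each sequence consists of ones.

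One small expository slip: your sentence ``The requirement $p>3$, rather than merely $p\geq 3$, is what rules out the single bad prime $p=2$ in the case $r=2$'' does not make sense, since $p\geq 3$ already excludes $p=2$. The hypothesis $p\nmid r$ is satisfied for all primes $p\geq 3$ in every case here; the restriction to $p>3$ in the corollary is not forced by the coprimality condition but simply reflects that $p=3$ is already handled separately (and more sharply) by Theorem \ref{Twp3}. This does not affect the correctness of your argument.
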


We checked the existence of solutions of the congruences $b_{m}(n)\equiv 0\pmod p$ and $c_{m}(n)\equiv 0\pmod p$ for any $3\leq m\leq 328$ and any prime number $m+2\leq p\leq m^{2}+m+1$. We found the solutions in all of these cases and added the list of the smallest solutions for $3\leq m\leq 12$ in Appendix A and Appendix B. 
As a consequence of our computations and Corollary \ref{colb}, we get that the congruences $b_{m}(n)\equiv 0\pmod p$ and $c_{m}(n)\equiv 0\pmod p$ have infinitely many solutions for primes in the considered range. The links to the full lists of solutions are given in the Appendixes.

As a simple consequence of Theorem \ref{Twp3} we obtain the next facts.

\begin{col}
If $\degu K\geq u+1$ then there is infinitely many $n$ such that $b_{T}(n)$ is divisible by $3$.
\end{col}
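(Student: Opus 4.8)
The plan is to combine Theorem \ref{Twp3} with an unpacking of the definition of $\degu K$, working under the standing hypotheses $L=(1,-1)$ and $3\nmid r$ of that theorem. First I would observe that the set $S:=\{d\in\mathbb{N}: k_{n}-k_{n-1}\geq d \text{ for infinitely many } n\}$ is downward closed: if infinitely many gaps are $\geq d$ and $d'\leq d$, then infinitely many gaps are $\geq d'$. Consequently $S=\{1,\dots,\degu K\}$ (or all of $\mathbb{N}$ when $\degu K=\infty$), so the hypothesis $\degu K\geq u+1$ forces $u+1\in S$. In other words, there are infinitely many indices $n$ with $k_{n+1}-k_{n}\geq u+1$, which is exactly the gap condition required to apply Theorem \ref{Twp3}.

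For each such index $n$, Theorem \ref{Twp3} produces at least one of the four numbers $b_{T}(k_{n})$, $b_{T}(k_{k_{n}+u-1})$, $b_{T}(k_{k_{n}+u})$, $b_{T}(k_{k_{n}+u+1})$ that is divisible by $3$. The key remaining point is that all four arguments tend to infinity as the gap-index $n$ grows: since $k$ is strictly increasing with $k_{n}\geq n$, we have $k_{n}\to\infty$, and each of the indices $k_{n}+u-1$, $k_{n}+u$, $k_{n}+u+1$ likewise tends to infinity, hence so do $k_{k_{n}+u-1}$, $k_{k_{n}+u}$, $k_{k_{n}+u+1}$.

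To conclude I would argue by contradiction. Suppose only finitely many $n$ satisfy $3\mid b_{T}(n)$, and let $N_{0}$ bound all of them. Using the previous paragraph, I would choose a gap-index $n$ (among the infinitely many available) large enough that each of $k_{n}$, $k_{k_{n}+u-1}$, $k_{k_{n}+u}$, $k_{k_{n}+u+1}$ exceeds $N_{0}$. Then Theorem \ref{Twp3} forces $b_{T}$ to vanish modulo $3$ at one of these arguments, all of which are $>N_{0}$, contradicting the choice of $N_{0}$. Hence infinitely many $n$ satisfy $3\mid b_{T}(n)$. The only genuinely delicate step is this last bookkeeping: a single application of Theorem \ref{Twp3} detects just one divisible value, so one must rule out the possibility that the same few small arguments are being rediscovered over and over — and that is precisely what the observation that all four arguments escape to infinity, packaged into the contradiction, takes care of.
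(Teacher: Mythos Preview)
Your proof is correct and follows the same approach as the paper: use the definition of $\degu K$ to obtain infinitely many indices $n$ with $k_{n+1}-k_{n}\geq u+1$, then invoke Theorem~\ref{Twp3} at each of them. The paper's version is the one-line ``the result easily follows from Theorem~\ref{Twp3}''; you have simply unpacked the downward-closedness of the gap set and made explicit the bookkeeping (that the four arguments $k_{n}$, $k_{k_{n}+u-1}$, $k_{k_{n}+u}$, $k_{k_{n}+u+1}$ tend to infinity) that the paper leaves implicit.
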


\begin{proof}
By the definition of the upper degree there exist infinitely many natural numbers $n$ for which $k_{n+1}-k_{n}\geq u+1$ and the result easily follows from Theorem \ref{Twp3}.
\end{proof}

\begin{col}
Let $m\geq 2$. Then, for any $n\in\mathbb{N}$ in any of the following cases at least one of the numbers:
\begin{enumerate}
\item[\textsc{1)}] $b_{m}(mn)$, $b_{m}(m^{2}n-m)$, $b_{m}(m^{2}n)$ and $b_{m}(m^{2}n+m)$,
\item[\textsc{2)}] $c_{m}(mn+1)$, $c_{m}(m^{2}n+1)$, $c_{m}(m^{2}n+m+1)$ and $c_{m}(m^{2}n+2m+1)$,
\item[\textsc{3)}]$\ovb_{m}(mn)$, $\ovb_{m}(m^{2}n)$, $\ovb_{m}(m^{2}n+m)$ and $\ovb_{m}(m^{2}n+2m)$
\end{enumerate}
is divisible by $3$.
\end{col}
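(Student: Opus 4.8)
The plan is to read off, for each of the three sequences, the associated triple $T=(K,L,R)$ from Example \ref{exple1}, check that it meets the hypotheses of Theorem \ref{Twp3}, and then translate the four arguments $k_n$, $k_{k_n+u-1}$, $k_{k_n+u}$, $k_{k_n+u+1}$ produced by that theorem into explicit values in each concrete case. Thus the whole corollary should reduce to three bookkeeping computations plus a verification of the gap condition.

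First I would record the relevant parameters. For $\textbf{b}_m$ we have $K=(mn)_{n=0}^\infty$, $L=(1,-1)$, $R=(1)$, so $u=0$ and $r=1$; for $\textbf{c}_m$ we have $k_0=0$, $k_n=mn+1$ for $n\geq 1$, $L=(1,-1)$, $R=(1)$, again $u=0$ and $r=1$; and for $\ovbb_m$ we have $K=(mn)_{n=0}^\infty$, $L=(1,-1)$, $R=(1,1)$, so $u=1$ and $r=2$. In all three cases $L=(1,-1)$ and $3\nmid r$, which is exactly what Theorem \ref{Twp3} demands. It then remains only to verify the gap condition $k_{n+1}-k_n\geq u+1$: for $\textbf{b}_m$ and $\ovbb_m$ the gap is constantly $m$, which is $\geq 1=u+1$ and $\geq 2=u+1$ respectively since $m\geq 2$, while for $\textbf{c}_m$ the gap is $m$ for $n\geq 1$ and $m+1$ for $n=0$, in both cases $\geq 1=u+1$.

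With the hypotheses in place I would simply substitute. For the sequences with $K=(mn)_{n=0}^\infty$ one has $k_{mj-1}=m^2j-m$, $k_{mj}=m^2j$, $k_{mj+1}=m^2j+m$ and $k_{mj+2}=m^2j+2m$. Hence for $\textbf{b}_m$ (where $u=0$) the four arguments $k_n$, $k_{k_n-1}$, $k_{k_n}$, $k_{k_n+1}$ become $mn$, $m^2n-m$, $m^2n$, $m^2n+m$, which is case \textsc{1)}; and for $\ovbb_m$ (where $u=1$) the arguments $k_n$, $k_{k_n}$, $k_{k_n+1}$, $k_{k_n+2}$ become $mn$, $m^2n$, $m^2n+m$, $m^2n+2m$, which is case \textsc{3)}. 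For $\textbf{c}_m$ (where $u=0$) I would instead use $k_{mj}=m^2j+1$, $k_{mj+1}=m^2j+m+1$ and $k_{mj+2}=m^2j+2m+1$, so that $k_n=mn+1$, $k_{k_n-1}=k_{mn}$, $k_{k_n}=k_{mn+1}$, $k_{k_n+1}=k_{mn+2}$ become $mn+1$, $m^2n+1$, $m^2n+m+1$, $m^2n+2m+1$, which is case \textsc{2)}.

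The only point requiring genuine care, and the main potential obstacle, is the bookkeeping of the composite indices $k_{k_n+\cdot}$ together with the exceptional value $k_0=0$ occurring for $\textbf{c}_m$. I would therefore check explicitly that every inner argument fed into a composition is at least $1$, so that the formula $k_j=mj+1$ applies rather than the exceptional $k_0=0$: for $n\in\mathbb{N}$ one has $mn-1\geq 1$, $mn\geq 1$, $mn+1\geq 1$ and $mn+2\geq 1$, so no exceptional term is ever invoked (and for $\textbf{b}_m$, $\ovbb_m$ no exception arises at all). Once this is confirmed, each of the three cases is an immediate instance of Theorem \ref{Twp3}, and the corollary follows.
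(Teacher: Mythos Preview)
Your proof is correct and takes essentially the same approach as the paper: both simply verify that the gap condition $k_{n+1}-k_n\geq u+1$ of Theorem~\ref{Twp3} is satisfied for all $n\in\mathbb{N}$ and then invoke that theorem. The paper's proof is a single line, while you spell out the explicit translation of the indices $k_n$, $k_{k_n+u-1}$, $k_{k_n+u}$, $k_{k_n+u+1}$ in each of the three cases and take extra care with the exceptional value $k_0=0$ for $\textbf{c}_m$; this added bookkeeping is accurate and helpful but does not change the underlying argument.
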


\begin{proof}
By Theorem \ref{Twp3} it is enough to observe that for any natural number $n$ we have $k_{n+1}-k_{n}=m\geq 2$.
\end{proof}

\section{The level and the rank}

We would like to have some information about the values $n$ for which $b_{T}(n)$ is divisible by some fixed number $h$. In this section we focus on finding sets containing at least one such value.

We again narrow down to the triple $T=(K,L,R)$ with $L=(1,-1)$ and $R=(1)$. Similar constructions can be done without any assumptions on $L$ and $R$ if we consider only $h$ not dividing $r$. However, in this generalization the construction becomes very complicated. 

Firstly, we want to exhibit the main idea. Let us take a look at the proof of Theorem \ref{Twp3} in the case $\deg K=2$. Without loss of generality we can assume $k_{n}=2n$ for any $n\in\mathbb{N}_{0}$. The proof is as follows: we take some natural number $n$ and values $c:=b_{T}(2n)$ and $d:=b_{T}(4n-2)$. Then, using the recurrence formula for $\bb_{T}$, we get equations for $b_{T}(4n)$ and $b_{T}(4n+2)$ involving $c$ and $d$, which allow us to say that at least one of these numbers is divisible by~$3$. 

We can repeat this construction for any $h$, but now we need to have more information to say something about divisibility by $h$ in the sequence $\bb_{T}$. That is, we need to produce more equations. Because of this, we take a number $e:=b_{T}(8n-6)$. By (\ref{rekurT}) (in this case this looks similar to (\ref{rekur})) we obtain the following set of equations:
\begin{equation}\label{equ1}
\begin{array}{lll}
b_{T}(2n)=c,\ \ \ \ \ \ \ \ \ \ & b_{T}(4n-2)=d,\ \ \ \ \ \ \ \ \ \ \ \ \ \ \ \ \ \ & b_{T}(8n-6)=e, \\
 & \ \ \ \ \ b_{T}(4n)=d+c, & b_{T}(8n-4)=e+d, \\
 & b_{T}(4n+2)=d+2c, & b_{T}(8n-2)=e+2d, \\
 & & \ \ \ \ \ b_{T}(8n)=e+3d+c, \\
 & & b_{T}(8n+2)=e+4d+2c, \\
 & & b_{T}(8n+4)=e+5d+4c, \\
 & & b_{T}(8n+6)=e+6d+6c. \\
\end{array}
\end{equation}
Now we threat $c$, $d$ and $e$ as variables and check whether that can take values such that none of the numbers above is equal to $0$ modulo some fixed $h$, for example $h=5$. Unfortunately, such a choice of the triple $(c,d,e)$ is possible. Indeed, it is enough to take $(c,d,e)$ such that $(c\mod 5,d\mod 5,e\mod 5)=(x,x,2x)$ or $(x,2x,4x)$ for some non--zero $x\in\mathbb{Z}/5\mathbb{Z}$. Thus, we take a number $f:=b_{T}(16n-14)$ and consider numbers of the form $16n+t$ for $t=-14, -12, -10,\ldots , 12, 14$. Then, we get new set of equations consist of these from (\ref{equ1}) and the following new ones:
\begin{align}\label{equ2}
b_{T}(16n-14)= & f, \nonumber \\
b_{T}(16n-12)= & f+e, \nonumber \\
b_{T}(16n-10)= & f+2e, \nonumber \\
b_{T}(16n-8)= & f+3e+d, \nonumber \\
b_{T}(16n-6)= & f+4e+2d, \nonumber \\
b_{T}(16n-4)= & f+5e+4d, \nonumber \\
b_{T}(16n-2)= & f+6e+6d, \\
b_{T}(16n)= & f+7e+9d+c, \nonumber \\
b_{T}(16n+2)= & f+8e+12d+2c, \nonumber \\
b_{T}(16n+4)= & f+9e+16d+4c, \nonumber \\
b_{T}(16n+6)= & f+10e+20d+6c, \nonumber \\
b_{T}(16n+8)= & f+11e+25d+10c, \nonumber \\
b_{T}(16n+10)= & f+12e+30d+14c, \nonumber \\
b_{T}(16n+12)= & f+13e+36d+20c, \nonumber \\
b_{T}(16n+14)= & f+14e+42d+26c. \nonumber 
\end{align}

Hence, we get the next equalities modulo $5$:

\noindent
\begin{minipage}{0.5\textwidth}
\begin{center} 
for $(c,d,e)=(x,2x,4x)$:
\end{center}
\begin{displaymath}
\begin{split}
b_{T}(16n-14)= & f, \\
b_{T}(16n-12)= & f+4x, \\
b_{T}(16n-10)= & f+3x, \\
b_{T}(16n-8)= & f+4x, \\
b_{T}(16n-6)= & f, \\
b_{T}(16n-4)= & f+3x, \\
b_{T}(16n-2)= & f+x, \\
b_{T}(16n)= & f+x, \\
b_{T}(16n+2)= & f+3x, \\
b_{T}(16n+4)= & f+2x, \\
b_{T}(16n+6)= & f+3x, \\
b_{T}(16n+8)= & f+4x, \\
b_{T}(16n+10)= & f+2x, \\
b_{T}(16n+12)= & f+4x, \\
b_{T}(16n+14)= & f+x.
\end{split}
\end{displaymath}
\end{minipage}
\begin{minipage}{0.5\textwidth}
\begin{center}
for $(c,d,e)=(x,x,2x)$:
\end{center}
\begin{displaymath}
\begin{split}
b_{T}(16n-14)= & f, \\
b_{T}(16n-12)= & f+2x, \\
b_{T}(16n-10)= & f+4x, \\
b_{T}(16n-8)= & f+2x, \\
b_{T}(16n-6)= & f, \\
b_{T}(16n-4)= & f+4x, \\
b_{T}(16n-2)= & f+3x, \\
b_{T}(16n)= & f+4x, \\
b_{T}(16n+2)= & f, \\
b_{T}(16n+4)= & f+3x, \\
b_{T}(16n+6)= & f+x, \\
b_{T}(16n+8)= & f+2x, \\
b_{T}(16n+10)= & f+3x, \\
b_{T}(16n+12)= & f+2x, \\
b_{T}(16n+14)= & f+x.
\end{split}
\end{displaymath}
\end{minipage}
\\

Thus, for any choice of $f$ some of the numbers above are equal to $0$ modulo $5$. This proves that there exist infinitely many $n$ such that $b_{T}(n)$ is divisible by $5$. In fact we get more precise information which says that for any $n$ at least one of the numbers $b_{T}(2^{s}n+2t_{s})$, $s\in\{1,\ldots ,4\}$, $t_{s}\in\{-2^{s-1}+1,-2^{s-1}+2,\ldots ,2^{s-1}-1\}$ is divisible by $5$. This was our aim.

Now we want to generalize the above construction. Let us introduce for any natural number $s$, {\it the $s$--th level} of $n$ in the sequence $\bb_{T}$, denoted by $\lvl(n,T,s)$, inductively as follows: let $b_{T}(k_{n})=c_{1}$ for some $c_{1}$ and let $\lvl(n,T,1):=\{c_{1}\}$ be the first level of $n$ in $b_{T}$. Similarly as before we can assume that $b_{T}(k_{k_{n-1}})=c_{2}$ for some $c_{2}$ and using the recurrence relation (\ref{rekurT}) to get formulas for the numbers of the form $b_{T}(k_{k_{n-1}+t})$ for $t=0,\ldots ,k_{n+1}-k_{n-1}-1$. We define the second level of $n$ in $\bb_{T}$ by the set containing the element from the first level and formulas obtained in the procedure described above.

Assume we have defined the $(s-1)$--th level of $n$ in $b_{T}$. Let $n'$ be such that $b_{T}(n')=c_{s-1}$ (in other words, $n'$ is the smallest number for which the formula for $b_{T}(n')$ depending on $c_{1},\ldots ,c_{s-1}$ is in $\lvl(n,T,s-1)\setminus\lvl(n,T,s-2)$ ). Again, we can add a new number $c_{s}:=b_{T}(k_
{n'-1})$ and using the recurrence relation (\ref{rekurT}) and formulas from the set $\lvl(n,T,s-1)\setminus\lvl(n,T,s-2)$ produce all possible new formulas for $b_{T}(n'')$ depending on $c_{1},\ldots ,c_{s}$ (without adding new variables) for corresponding numbers $n''$. We define $\lvl(n,T,s)$ as a set containing these new formulas and all formulas from the set $\lvl(n,T,s-1)$.

\begin{rem}
We will treat the elements of the sets $\lvl(n,K,s)$ as formulas depending on the variables $c_{1},\ldots ,c_{s}$.
\end{rem}

\begin{rem}
If $L=(1,-1)$, $R=(1)$ and $K$ is such that $\deg K$ exists, i.e., the difference $m:=k_{n+1}-k_{n}$ is constant for all but finitely many numbers $n$, we will use the notation $\lvl(n,m,s):=\lvl(n,T,s)$. In this case for any $s\in\mathbb{N}$, the sets $\lvl(n,m,s)$ contain the same formulas for all $n$ big enough.
\end{rem}

\begin{exple}
Let us look at the set of equalities (\ref{equ1}). The first column gives the set $\lvl(n,2,1)$, the first together with the second give $\lvl(n,2,2)$ and all three columns give $\lvl(n,2,3)$. If we add to that set the formulas from (\ref{equ2}) we get $\lvl(n,2,4)$.
\end{exple}

Notice that we have performed the above construction for any sequence $K$ and $L=(1,-1)$, $R=(1)$. If $L$ and $R$ are arbitrary we can do the same construction but then the set $\lvl(n,T,1)$ has to contain $u+1$ elements and we have to add $t$ new variables at each step. 

The next definition is stated for any triple $T$. We define {\it the rank} of a natural number $n$ in the sequence $b_{T}$ modulo natural number $h$ as the least number $\rk(n,T,h)$ such that at least one of the elements of $\lvl(n,T,\rk(n,T,h))$ is divisible by $h$. If we have no information about the numbers $c_{1},c_{2},\ldots$ and treat them as a variables (that is, we assume they can be equal to any number between $0$ and $h-1$), we call the number $\rk(n,T,h)$ the rank of $n$ in $b_{T}$ modulo $h$ in the general case and denote this by $\rk_{g}(n,T,h)$. If there are no such numbers, then we define $\rk(n,T,h)=\infty$ and $\rk_{g}(n,T,h)=\infty$ respectively.

\begin{rem}
If $m=\deg K$ exists we denote $\rk(n,m,h):=\rk(n,T,h)$ and $\rk_{g}(n,m,h):=\rk_{g}(n,T,h)$.
\end{rem}

\begin{exple}
Theorem \ref{Twp3} gives us $\rk_{g}(n,T,3)=2$ for any $n$ for which $k_{n+1}-k_{n}\geq u+1$. On the beginning of this section we also showed that $\rk_{g}(n,2,5)=4$ for any natural number $n$ big enough.
\end{exple}

\begin{exple}
It is possible that $\rk(n,T,h)\neq\rk_{g}(n,T,h)$. For example, consider a triple $T$ with $L=(1,-1)$, $R=(1)$ and a sequence $K$ such that $\deg K=2$. Using computer calculations we checked that in the general case $r_{g}(n,T,4)>10$ for $n$ big enough. Moreover, the computations suggest that $r_{g}(n,T,4)=\infty$ in this case. On the other hand, taking the triple $T$ such that $\bb_{T}=\bb_{2}$ we get $\deg T=2$ and $\rk(n,T,4)\leq 2$ for any $n\geq 2$. However, we always have $\rk(n,T,h)\leq\rk_{g}(n,T,h)$.
\end{exple}

Let 
\begin{displaymath}
S(n,T,s):=\{\ j\in\mathbb{N}_{0}\ |\ b_{T}(j)\in\lvl(n,T,i)\text{ for some } i\leq s\ \}.
\end{displaymath}
Using the above definitions we can formulate the following:

\begin{theorem}
Let $T$ be an arbitrary triple and $h$ be a natural number. If $\rk(n,T,h)<\infty$, then at least one of the numbers $b_{T}(k)$ for $k\in S(n,T,\rk(n,T,h))$ is divisible by $h$.
\end{theorem}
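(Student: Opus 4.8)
The plan is to unwind the definitions of $\rk(n,T,h)$ and of $S(n,T,s)$ and to observe that the statement is, at bottom, a bookkeeping identity between the formulas stored in a level and the concrete sequence terms they encode.

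First I would fix the hypothesis $\rk(n,T,h)=s<\infty$. By definition this means that $s$ is the least integer for which some element of $\lvl(n,T,s)$ is divisible by $h$; let me single out one such element and call it $E$, so that $h\mid E$ once the variables $c_{1},\ldots,c_{s}$ are assigned the actual values they take in $\bb_{T}$. The key structural fact I would then invoke is built into the inductive construction of the levels: each formula placed into $\lvl(n,T,s)$ is produced, via the recurrence (\ref{rekurT}), precisely as the formula for some specific term $b_{T}(k)$ of the sequence. In particular there is an index $k\in\mathbb{N}_{0}$ with $E=b_{T}(k)$ as formulas, and hence as values after substitution; consequently $b_{T}(k)=E\in\lvl(n,T,s)$.

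Finally I would apply the definition of $S$. Taking $i=s$, the membership $b_{T}(k)\in\lvl(n,T,i)$ for some $i\leq s$ holds, so $k\in S(n,T,s)=S(n,T,\rk(n,T,h))$. Since $h\mid E=b_{T}(k)$, the number $b_{T}(k)$ is divisible by $h$ and its index $k$ lies in the required set, which is exactly the assertion of the theorem.

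The only delicate point --- and thus the main, though minor, obstacle --- is to make precise the correspondence between an element of a level, regarded as a formula in the variables $c_{i}$, and the single index $k$ it represents, so that membership of $k$ in $S$ is unambiguous. I would handle this by recording, at each step of the inductive construction of $\lvl(n,T,s)$, the explicit index $n''$ attached to every newly created formula $b_{T}(n'')$; this makes the map $E\mapsto k$ well defined and the argument above rigorous.
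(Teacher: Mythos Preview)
Your proposal is correct and is exactly what the paper has in mind: the theorem is stated there without proof because it is immediate from the definitions of $\lvl$, $\rk$ and $S$, and your argument simply unwinds these definitions. Your care about tagging each formula in $\lvl(n,T,s)$ with its index $k$ is appropriate and matches how the levels are constructed in the paper.
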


In particular, we get:

\begin{col}
Let $h$, $n$ and $m\geq 2$ be natural numbers. If $\rk(n,m,h)<\infty$, then at least one of the numbers $b_{m}(m^{s}n+mt_{s})$, $s\in\{1,\ldots ,\rk(n,m,h)\}$, $t_{s}\in\{-m^{s-1}+1,-m^{s-1}+2,\ldots ,m^{s-1}-1\}$ is divisible by $h$.
\end{col}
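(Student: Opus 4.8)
The plan is to apply the theorem preceding this corollary to the triple $T=(K,L,R)$ with $K=(mn)_{n=0}^{\infty}$, $L=(1,-1)$ and $R=(1)$. For this triple $\bb_T=\bb_m$, and since $\deg K=m$ (Example \ref{exple1}) we have $\rk(n,T,h)=\rk(n,m,h)$. The theorem then hands us an index $k\in S(n,T,\rk(n,m,h))$ with $h\mid b_m(k)$, so the whole problem reduces to describing $S(n,T,\rk(n,m,h))$ explicitly and recognising the indices named in the statement.

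The core of the argument is an inductive description of the sets $S(n,T,s)$. I would prove by induction on $s$ that
\[
S(n,T,s)=\bigcup_{s'=1}^{s}\ \bigcup_{t\in A_{s'}}\bigl\{\,m^{s'}n+mt+i \;:\; 0\le i\le m-1\,\bigr\},
\]
where each $A_{s'}=[a_{s'},b_{s'}]$ is a contiguous interval of integers. The point of the block structure is that, by (\ref{rekur}a), $b_m$ is constant on each set $\{m^{s'}n+mt,\ldots,m^{s'}n+mt+m-1\}$, whose smallest element is $m^{s'}n+mt$. For $s=1$ the relation (\ref{rekur}a) gives $b_m(mn)=\cdots=b_m(mn+m-1)=c_1$, so $A_1=\{0\}$. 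For the inductive step, the new level-$(s-1)$ formulas furnish the known values $b_m(j)$ with $j$ ranging over the contiguous block $[m^{s-1}n+ma_{s-1},\,m^{s-1}n+mb_{s-1}+m-1]$; substituting each such $j$ into $b_m(mj)=b_m(mj-1)+b_m(j)$ from (\ref{rekur}b) and invoking (\ref{rekur}a) produces exactly the blocks with representatives $m^{s}n+m\tau$ for $\tau\in[ma_{s-1},\,mb_{s-1}+m-1]$, while the newly introduced variable $c_s=b_m(k_{n'-1})$, attached to the smallest level-$(s-1)$ index $n'=m^{s-1}n+ma_{s-1}$, anchors the subtraction in (\ref{rekur}b) for the lowest new block and contributes the single further representative $\tau=ma_{s-1}-1$. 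Hence $A_s=[ma_{s-1}-1,\,mb_{s-1}+m-1]$.

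It remains only to bound this interval. A one-line induction on the recursion $a_s=ma_{s-1}-1$, $b_s=mb_{s-1}+m-1$ gives $b_s=mb_{s-1}+m-1\le m(m^{s-2}-1)+m-1=m^{s-1}-1$ and $a_s=ma_{s-1}-1\ge -m(m^{s-2}-1)-1=-m^{s-1}+m-1\ge -m^{s-1}+1$, the last step using $m\ge 2$. Thus $A_s\subseteq\{-m^{s-1}+1,\ldots,m^{s-1}-1\}$ for every $s$. Putting this together, the index $k$ supplied by the theorem lies in one of the blocks above, say the one at level $s$ with representative $m^{s}n+mt$ for some $s\le\rk(n,m,h)$ and $t\in A_s\subseteq\{-m^{s-1}+1,\ldots,m^{s-1}-1\}$. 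Since $b_m$ is constant on that block, $h\mid b_m(k)=b_m(m^{s}n+mt)$, which is precisely the asserted divisibility.

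The step I expect to be the main obstacle is the bookkeeping in the inductive construction of $A_s$: one must check that feeding the level-$(s-1)$ block into the recurrence produces exactly the claimed contiguous range of level-$s$ representatives — neither more nor fewer — and, in particular, that the anchoring variable $c_s$ extends the interval by precisely one representative at its lower end. A minor subtlety to record is that for $m\ge 3$ the true interval $A_s$ is a \emph{proper} subset of the symmetric set $\{-m^{s-1}+1,\ldots,m^{s-1}-1\}$ (indeed $a_s=-(m^{s-1}-1)/(m-1)$), so the statement is obtained through the containment $A_s\subseteq\{-m^{s-1}+1,\ldots,m^{s-1}-1\}$ rather than through equality; the weaker symmetric range is all that the conclusion requires once constancy of $b_m$ on blocks moves the divisibility onto the representative.
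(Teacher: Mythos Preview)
Your proposal is correct and follows exactly the route the paper intends: the paper states this corollary with no proof beyond ``In particular, we get'', so the content is precisely the identification of $S(n,T,\rk(n,m,h))$ for the $m$--ary triple, which you carry out carefully via the recursion $a_s=ma_{s-1}-1$, $b_s=mb_{s-1}+m-1$. Your closing remark that for $m\ge 3$ the true interval $A_s=[-(m^{s-1}-1)/(m-1),\,m^{s-1}-1]$ is strictly smaller than the symmetric range in the statement is a correct sharpening that the paper does not record.
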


The previous facts, together with the tables of values for $\rk_{g}(n,2,h)$ and $\rk_{g}(n,3,h)$ for $h$ added in Appendix~C, generalize Theorems \ref{comgen}, \ref{comgen2} and \ref{twcom} in the case of $\bb_{T}$ with $\deg K=2$ and $\deg K=3$. We also add a table of upper bounds of $\rk(n,2,h)$ in the case of $\bb_{T}=\bb_{2}$ for numbers $T$ equal $4$ modulo $8$ and for $n\geq 2$. In our computations in this special case we used the fact that $b_{2}(n)$ is even for $n\geq 2$.

While checking the values of $\rk_{g}(n,2,h)$ we noticed interesting phenomenon. We present this in the next question.

\begin{que}
Are the following statements true: 

 $1)$ $\rk_{g}(n,2,4h)=\infty$, 
 
 $2)$ $0\leq\rk_{g}(n,2,4l+3)-\rk_{g}(n,2,4l+1)\leq 1$, 
 
 $3)$ $0\leq\rk_{g}(n,2,4h+2)-\rk_{g}(n,2,4h+1)\leq 1$, 

 $4)$ $\rk_{g}(n,2,4h+1)\leq\rk_{g}(n,2,4h+5)$, 
 
\begin{flushleft}
for  any natural numbers $h$ and $n$ large enough?
\end{flushleft}
\end{que}

According to the first statement of above question, we expect that for the choice $c_{1}=c_{2}=\ldots =1$ none of elements of $\lvl(n,2,r)$ would be divisible by $4$ for any $r$.

\section{Sequences with $L=(1,-1)$ modulo $\deg K$}

Take a sequence $K$ such that $\deg K=m$ exists. In this paragraph we want to give a characterisation modulo $m$ of the recurrence sequences with $L=(1,-1)$. 

We use the notation $b_{T}(q)=0$ for $q\notin \mathbb{N}_{0}$ and $k_{-n}=-n$ for $n\in\mathbb{N}$.

Let us start with a well known fact characterizing the sequence of $m$--ary partitions modulo $m$. The first proof of this fact, given by Alkauskas, is done by simple induction argument. Andrews {\it et. al.} used power series in their proof and 
Edgar's approach was more combinatorial and gave more information. Now, we give a new proof.

\begin{stw}\label{charactb}
Let $m>1$ and $n\in\mathbb{N}_{0}$. If $n=a_{0}+a_{1}m+\ldots +a_{s}m^{s}$ where $0\leq a_{j}\leq m-1$ for $j=0,\ldots ,s$, then 
\begin{displaymath}
b_{m}(mn)\equiv\prod_{j=0}^{s}(a_{j}+1)\pmod m.
\end{displaymath}
\end{stw}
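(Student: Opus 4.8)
The plan is to reduce the evaluation of $b_{m}(mn)\bmod m$ to a single digit step and then induct on the number of base-$m$ digits of $n$. First I would establish the telescoping identity
\[
b_{m}(mn)=\sum_{k=0}^{n}b_{m}(k),
\]
which follows directly from the two parts of (\ref{rekur}): since $mn-1=m(n-1)+(m-1)$, equation (\ref{rekur}a) gives $b_{m}(mn-1)=b_{m}(m(n-1))$, and then (\ref{rekur}b) yields $b_{m}(mn)=b_{m}(m(n-1))+b_{m}(n)$. Setting $f(n):=b_{m}(mn)$, this is the first-order recurrence $f(n)=f(n-1)+b_{m}(n)$ with $f(0)=b_{m}(0)=1$, whose solution collapses to the stated sum.

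Next I would exploit (\ref{rekur}a) modulo $m$. Writing $n=mn'+a_{0}$ with $n'=\lfloor n/m\rfloor=a_{1}+a_{2}m+\cdots+a_{s}m^{s-1}$, I split the sum into the complete blocks $\{mq,mq+1,\ldots,mq+(m-1)\}$ for $0\leq q\leq n'-1$ together with the truncated final block $\{mn',\ldots,mn'+a_{0}\}$. By (\ref{rekur}a) the $m$ terms in each complete block are equal, so each complete block contributes $m\cdot b_{m}(mq)\equiv 0\pmod m$, while the final block contributes exactly $(a_{0}+1)\,b_{m}(mn')$. Hence
\[
b_{m}(mn)\equiv (a_{0}+1)\,b_{m}(mn')\pmod m.
\]

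Finally I would close the induction on $n$. The base case $n=0$ is immediate from $b_{m}(0)=1$ and the empty product. For $n\geq 1$ we have $n'<n$ (as $m\geq 2$), so the inductive hypothesis applies to $n'$ and gives $b_{m}(mn')\equiv\prod_{j=1}^{s}(a_{j}+1)\pmod m$; multiplying by the factor $(a_{0}+1)$ produces $\prod_{j=0}^{s}(a_{j}+1)$, as required. The argument is entirely routine; the only point requiring slight care is the bookkeeping in the block decomposition, namely checking that the truncated final block really consists of exactly $a_{0}+1$ equal terms and that $n'$ carries precisely the digits $a_{1},\ldots,a_{s}$, but no genuine obstacle arises.
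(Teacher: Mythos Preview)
Your argument is correct and essentially identical to the paper's proof: both establish the summation identity $b_{m}(mn)=\sum_{i=0}^{n}b_{m}(i)$, decompose the sum into full $m$-blocks plus a truncated tail to obtain $b_{m}(mn)\equiv(a_{0}+1)\,b_{m}(mn')\pmod m$, and then iterate. The only cosmetic difference is that the paper phrases the final step as ``repeat the reasoning $s+1$ times'' while you frame it as an induction on $n$.
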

\begin{proof}
From (\ref{rekur}b) we have for any $n\in\mathbb{N}_{0}$:
\begin{displaymath}
b_{m}(mn)=\sum_{i=0}^{n}b_{m}(i).
\end{displaymath}
We can write $n=mn'+a_{0}$ for some $a_{0}\in\{0,\ldots ,m-1\}$ and using (\ref{rekur}a) get:
\begin{displaymath}
\begin{split}
b_{m}(m(mn'+a_{0}))= & \sum_{i=0}^{mn'+a_{0}}b_{m}(i)=(a_{0}+1)b_{m}(mn')+m\left(\sum_{i=0}^{n'-1}b_{m}(mi)\right) \\
\equiv & (a_{0}+1)b_{m}(mn')\pmod m.
\end{split}
\end{displaymath}
Now we can write $n'=mn''+a_{1}$ and repeat the reasoning. After $s+1$ steps we get the result.
\end{proof}

This proof can be adapted to a more general case. We start by the following important

\begin{lem}\label{LemChar}
Let $\deg K=m$ and $n_{0}\in\mathbb{N}_{0}$ be the smallest number such that the difference $k_{n+1}-k_{n}=m$ for any $n\geq n_{0}$. If $u<k_{n+1}-k_{n}$ for any $n\geq n_{0}-1$, then:
\begin{displaymath}
\begin{split}
b_{T}(k_{k_{n}+q})= & b_{T}(k_{k_{n_{0}}-1}) + b_{T}(k_{n_{0}-1})\sum_{i=1}^{u}ir_{i} + b_{T}(k_{n})\sum_{i=0}^{q}(q-i+1)r_{i} \\
 &  + b_{T}(k_{n-1})\sum_{i=q+1}^{u}(m+q-i+1)r_{i} + m\left(\sum_{i=0}^{u}r_{i}\right)\left(\sum_{j=n_{0}}^{n-1}b_{T}(k_{j})\right)
\end{split}
\end{displaymath}
for any natural numbers $n\geq n_{0}$ and $q\in\{0,\ldots ,m-1\}$.
\end{lem}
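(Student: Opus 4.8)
The plan is to turn the two-part recurrence (\ref{rekurT}) into a single telescoping identity and then evaluate the resulting sum by a block-counting argument. Since $L=(1,-1)$, relation (\ref{rekurT}b) reads $b_T(k_N)-b_T(k_{N-1})=\sum_{j=0}^{u}r_j\,b_T(N-j)$. Summing this over $N=k_{n_0},k_{n_0}+1,\dots,k_n+q$ collapses the left-hand side and gives
\[
b_T(k_{k_n+q})=b_T(k_{k_{n_0}-1})+\sum_{N=k_{n_0}}^{k_n+q}\sum_{j=0}^{u}r_j\,b_T(N-j).
\]
This already accounts for the leading term $b_T(k_{k_{n_0}-1})$, so the whole content of the lemma is to evaluate the double sum on the right.

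First I would interchange the two summations and shift the inner index, rewriting the double sum as $\sum_{j=0}^{u}r_j\big(\sum_{N'=k_{n_0}-j}^{k_n+q-j}b_T(N')\big)$, so that everything reduces to computing, for each fixed $j\in\{0,\dots,u\}$, a sum of consecutive values of $b_T$. Here (\ref{rekurT}a) is the crucial tool: $b_T$ is constant and equal to $b_T(k_\ell)$ on every block $[k_\ell,k_{\ell+1}-1]$, and by the defining property of $n_0$ each block of index $\ell\ge n_0$ has length exactly $m$ and hence contributes $m\,b_T(k_\ell)$. Thus each inner sum splits into a lower partial block, a run of full length-$m$ blocks, and an upper partial block.

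The main obstacle is the exact bookkeeping of the two partial end-blocks, whose positions depend on $j$. The lower endpoint $k_{n_0}-j$ falls strictly inside block $n_0-1$ — this is precisely where the hypothesis $u<k_{n_0}-k_{n_0-1}$ (the instance $n=n_0-1$ of $u<k_{n+1}-k_n$) is used — and contributes $j$ copies of $b_T(k_{n_0-1})$; after weighting by $r_j$ and summing over $j$ this produces $b_T(k_{n_0-1})\sum_{i=1}^{u}ir_i$. The upper endpoint $k_n+q-j$ is the delicate one: using $u<m$ (again from the hypothesis) one checks that it lies in block $n$, at offset $q-j$, when $j\le q$, and drops strictly inside block $n-1$, at offset $m+q-j$, when $j>q$. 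Splitting the $j$-summation at $j=q$ accordingly isolates the contributions proportional to $b_T(k_n)$ and to $b_T(k_{n-1})$, giving sums of the shape $\sum_{i=0}^{q}(q-i+1)r_i$ and $\sum_{i=q+1}^{u}(\cdots)r_i$.

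Finally I would assemble the full length-$m$ blocks. For $j\le q$ these are indexed $\ell=n_0,\dots,n-1$, while for $j>q$ they run only to $\ell=n_0,\dots,n-2$; weighting by $r_j$, summing over $j$, and recombining the surplus copies of $b_T(k_{n-1})$ with the upper partial block produces the bulk term $m\big(\sum_{i=0}^{u}r_i\big)\big(\sum_{j=n_0}^{n-1}b_T(k_j)\big)$. The single point demanding genuine care — and the easiest place to commit an off-by-$m$ slip — is the treatment of block $n-1$: according as it is or is not counted among the complete blocks, the coefficient of $b_T(k_{n-1})$ reads $(q-i+1)$ or $(m+q-i+1)$ while the upper limit of the bulk sum toggles between $n-1$ and $n-2$, and these two choices must be kept mutually consistent. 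Once the three partial counts $j$, $q-i+1$ and $m+q-i+1$ have been pinned down by the block positions, the identity follows by elementary regrouping of the finite sums.
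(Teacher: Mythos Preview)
Your approach is exactly the paper's: telescope the relation $b_T(k_N)-b_T(k_{N-1})=\sum_j r_j b_T(N-j)$ over $N=k_{n_0},\dots,k_n+q$, swap the two sums, shift the inner index, and then evaluate each $\sum_{M=k_{n_0}-i}^{k_n+q-i}b_T(M)$ by cutting it into a lower partial block in level $n_0-1$, a run of full length-$m$ blocks, and an upper partial block in level $n$ or $n-1$ according as $i\le q$ or $i>q$. Your identification of the three partial counts $i$, $q-i+1$, and $m+q-i+1$ and of the hypothesis $u<k_{n_0}-k_{n_0-1}$ as what keeps the lower endpoint inside block $n_0-1$ matches the paper line for line.

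Your warning about the ``off-by-$m$ slip'' at block $n-1$ is, however, exactly on target --- and the stated identity itself commits it. With the bulk sum written as $m\bigl(\sum_i r_i\bigr)\sum_{j=n_0}^{n-1}b_T(k_j)$ (upper limit $n-1$), the consistent coefficient of $b_T(k_{n-1})$ is $\sum_{i=q+1}^{u}(q-i+1)r_i$, \emph{not} $\sum_{i=q+1}^{u}(m+q-i+1)r_i$; the displayed formula mixes the two conventions you describe and thereby overcounts by $m\,b_T(k_{n-1})\sum_{i=q+1}^{u}r_i$. (A quick check with $\ovbb_{3}$, say $n=1$, $q=0$, confirms this: the stated formula gives $12$ while $\ovb_3(9)=9$.) The paper's own chain of equalities slips at precisely the step you flag, writing $m\sum_{j=n_0}^{n-1}$ where $m\sum_{j=n_0}^{n-2}$ is meant for the range $i>q$. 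None of this affects the mod-$m$ consequence (Theorem~\ref{charact}), where $(m+q-i+1)\equiv(q-i+1)$, nor the later applications, which all have either $u=0$ or $q\ge u$ so that the offending sum is empty. So your method is right; just carry the bookkeeping through to the correct coefficient rather than the printed one.
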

\begin{proof}
We have $b_{T}(k_{n})-b_{T}(k_{n-1})=\sum_{i=0}^{u}r_{i}b_{T}(n-i)$. Denote the right side of this equality by $a(n)$. Thus, for $n\geq k_{n_{0}}-1$, we have the equality:
\begin{displaymath}
b_{T}(k_{n})=b_{T}(k_{k_{n_{0}}-1})+\sum_{j=k_{n_{0}}}^{n}a(j).
\end{displaymath}
Similarly as before, we put $k_n+q$ instead of $n$ and get:
\begin{align*}
b_{T}(k_{k_{n}+q})= & b_{T}(k_{k_{n_{0}}-1})+\sum_{j=k_{n_{0}}}^{k_{n}+q}a(j) \\
 = & b_{T}(k_{k_{n_{0}}-1}) + \sum_{j=k_{n_{0}}}^{k_{n}+q}\sum_{i=0}^{u}r_{i}b_{T}(j-i) \\
 = & b_{T}(k_{k_{n_{0}}-1}) + \sum_{i=0}^{u}r_{i}\left(\sum_{j=k_{n_{0}}}^{k_{n}+q}b_{T}(j-i)\right) \\
 = & b_{T}(k_{k_{n_{0}}-1}) + \sum_{i=0}^{u}r_{i}\left(\sum_{j=k_{n_{0}}-i}^{k_{n}-i+q}b_{T}(j)\right) \\
 = & b_{T}(k_{k_{n_{0}}-1}) + \sum_{i=0}^{u}r_{i}\left(\sum_{j=k_{n_{0}}}^{k_{n}-i+q}b_{T}(j)\right) + \sum_{i=1}^{u}ir_{i}b_{T}(k_{n_{0}-1}) \\
 = & b_{T}(k_{k_{n_{0}}-1}) + b_{T}(k_{n_{0}-1})\sum_{i=1}^{u}ir_{i} + \sum_{i=0}^{q}r_{i}\left(\sum_{j=k_{n}}^{k_{n}+q-i}b_{T}(k_{n}) + m\left(\sum_{j=n_{0}}^{n-1}b_{T}(k_{j})\right)\right) \\
 & + \sum_{i=q+1}^{u}r_{i}\left(\sum_{j=k_{n-1}}^{k_{n}+q-i}b_{T}(k_{n-1}) + m\left(\sum_{j=n_{0}}^{n-1}b_{T}(k_{j})\right)\right) 
 \\
 = & b_{T}(k_{k_{n_{0}}-1}) + b_{T}(k_{n_{0}-1})\sum_{i=1}^{u}ir_{i} \\ & + \sum_{i=0}^{q}r_{i}\left(\sum_{j=k_{n}}^{k_{n}+q-i}b_{T}(k_{n})\right)+ m\left(\sum_{i=0}^{q}r_{i}\right)\left(\sum_{j=n_{0}}^{n-1}b_{T}(k_{j})\right) \\
   & + \sum_{i=q+1}^{u}r_{i}\left(\sum_{j=k_{n-1}}^{k_{n-1}+m-(i-q)}b_{T}(k_{n-1})\right) +m\left(\sum_{i=q+1}^{u}r_{i}\right)\left(\sum_{j=n_{0}}^{n-1}b_{T}(k_{j})\right) \\
 = & b_{T}(k_{k_{n_{0}}-1}) + b_{T}(k_{n_{0}-1})\sum_{i=1}^{u}ir_{i} + b_{T}(k_{n})\sum_{i=0}^{q}(q-i+1)r_{i} \\ & + b_{T}(k_{n-1})\sum_{i=q+1}^{u}(m+q-i+1)r_{i} + m\left(\sum_{i=0}^{u}r_{i}\right)\left(\sum_{j=n_{0}}^{n-1}b_{T}(k_{j})\right).
\end{align*}
\end{proof}

The next fact is a simple consequence of Lemma \ref{LemChar}.

\begin{theorem}\label{charact}
Let $\deg K=m$ and $n_{0}\in\mathbb{N}_{0}$ be the smallest number such that the difference $k_{n+1}-k_{n}=m$ for any $n\geq n_{0}$. If $u<k_{n+1}-k_{n}$ for any $n\geq n_{0}-1$, then:
\begin{displaymath}
\begin{split}
b_{T}(k_{k_{n}+q})\equiv & b_{T}(k_{k_{n_{0}}-1}) + b_{T}(k_{n_{0}-1})\sum_{i=1}^{u}ir_{i} \\
 & + b_{T}(k_{n})\sum_{i=0}^{q}(q-i+1)r_{i} + b_{T}(k_{n-1})\sum_{i=q+1}^{u}(q-i+1)r_{i}\pmod m
\end{split}
\end{displaymath}
for any natural numbers $n\geq n_{0}$ and $q\in\{0,\ldots ,m-1\}$.
\end{theorem}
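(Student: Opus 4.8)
The plan is to obtain the stated congruence directly from the exact identity proved in Lemma \ref{LemChar}, simply by reducing that identity modulo $m$. The hypotheses here are identical to those of Lemma \ref{LemChar} (we have $\deg K=m$, $n_0$ is minimal with $k_{n+1}-k_n=m$ for $n\geq n_0$, and $u<k_{n+1}-k_n$ for all $n\geq n_0-1$), so the lemma applies verbatim and gives, for every $n\geq n_0$ and every $q\in\{0,\ldots,m-1\}$,
\begin{displaymath}
\begin{split}
b_{T}(k_{k_{n}+q})= & b_{T}(k_{k_{n_{0}}-1}) + b_{T}(k_{n_{0}-1})\sum_{i=1}^{u}ir_{i} + b_{T}(k_{n})\sum_{i=0}^{q}(q-i+1)r_{i} \\
 & + b_{T}(k_{n-1})\sum_{i=q+1}^{u}(m+q-i+1)r_{i} + m\left(\sum_{i=0}^{u}r_{i}\right)\left(\sum_{j=n_{0}}^{n-1}b_{T}(k_{j})\right).
\end{split}
\end{displaymath}
The whole task is then to identify, within the right-hand side, exactly the parts that vanish modulo $m$.

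There are only two simplifications to make. First, the final summand carries an explicit factor of $m$, hence is $\equiv 0\pmod m$ and drops out. Second, the only remaining discrepancy between the lemma and the asserted congruence is in the coefficient of $b_{T}(k_{n-1})$: the exact formula has $\sum_{i=q+1}^{u}(m+q-i+1)r_{i}$, while the theorem claims $\sum_{i=q+1}^{u}(q-i+1)r_{i}$. I would resolve this by splitting
\begin{displaymath}
\sum_{i=q+1}^{u}(m+q-i+1)r_{i}=m\sum_{i=q+1}^{u}r_{i}+\sum_{i=q+1}^{u}(q-i+1)r_{i},
\end{displaymath}
and observing that the first term on the right is a multiple of $m$, so that $\sum_{i=q+1}^{u}(m+q-i+1)r_{i}\equiv\sum_{i=q+1}^{u}(q-i+1)r_{i}\pmod m$. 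Substituting both simplifications into the closed form yields precisely the congruence in the statement.

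I do not expect any genuine obstacle: the theorem is a direct corollary, and the argument reduces to discarding the two contributions divisible by $m$ in the identity of Lemma \ref{LemChar}. The only point needing a moment's care is the bookkeeping — verifying that the $m$ appearing inside $(m+q-i+1)$ is the same quantity $m=\deg K$ that governs the final summand, which holds by construction. All the substantive work, namely the re-indexing of the inner sums and the collection of the geometric contributions coming from the blocks of $m$ equal consecutive values, has already been carried out in the proof of the lemma, so the present statement follows immediately.
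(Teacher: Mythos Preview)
Your proof is correct and matches the paper's approach exactly: the paper's own proof is the single sentence ``We take the identity from Lemma \ref{LemChar} modulo $m$,'' and your write-up simply spells out the two places where a multiple of $m$ is discarded.
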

\begin{proof}
We take the identity from Lemma \ref{LemChar} modulo $m$.
\end{proof}
Now we can write $n$ and $n-1$ in the form $k_{n'}+q'$ and repeat the above computations. We can do this as long as  the numbers $n$ and $n-1$ are greater than $k_{n_{0}}+1$. Hence, on the end we get the formula for $b_{T}(n)\mod m$ depending on coefficients $q_{0},\ldots , q_{t}$ in representation $n=k_{k_{\ldots _{k_{n_{0}}+q_{0}}}+q_{t-1}}+q_{t}$. 

In the case of sequences $\cc_{m}$ and $\ovbb_{m}$ we get:

\begin{theorem}\label{charactcovb}
For $m\geq 2$ we have:
\begin{enumerate}
\item[\textsc{1)}] $c_{m}(m(mn+q+1)+1)\equiv 1+(q+1)c_{m}(mn+1)\pmod m$,
\item[\textsc{2)}] $\ovb_{m}(m(mn+q))\equiv (2q+1)\ovb_{m}(mn)\pmod m$
\end{enumerate}
for any $n\geq 1$ and $q\in\{0,\ldots ,m-1\}$.
\end{theorem}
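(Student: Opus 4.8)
The plan is to derive both congruences as direct specializations of Theorem \ref{charact}, which already gives a general formula for $b_{T}(k_{k_{n}+q}) \bmod m$ once we identify the triple $T$ corresponding to each sequence and verify the hypotheses $\deg K = m$ and $u < k_{n+1}-k_{n}$ for the relevant indices. For $\cc_{m}$ we use the triple from Example \ref{exple1}(2), namely $k_{0}=0$ and $k_{n}=mn+1$ for $n\geq 1$, with $L=(1,-1)$ and $R=(1)$; here $u=0$, so $r_{0}=1$ and all sums $\sum_{i=1}^{u}$ are empty. For $\ovbb_{m}$ we use Example \ref{exple1}(3), so $k_{n}=mn$, $L=(1,-1)$ and $R=(1,1)$, giving $u=1$, $r_{0}=r_{1}=1$. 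In each case I would first pin down $n_{0}$ (the smallest index past which the gaps are constantly $m$): for $\ovbb_{m}$ we have $k_{n+1}-k_{n}=m$ for all $n\geq 0$, so $n_{0}=0$; for $\cc_{m}$ the first gap $k_{1}-k_{0}=1$ is irregular while $k_{n+1}-k_{n}=m$ for $n\geq 1$, so $n_{0}=1$.

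For the $\ovbb_{m}$ statement I would substitute $u=1$, $r_{0}=r_{1}=1$, $n_{0}=0$ into Theorem \ref{charact}. The constant terms $b_{T}(k_{k_{n_{0}}-1})$ and $b_{T}(k_{n_{0}-1})$ involve indices $k_{-1}=-1$ and $k_{k_0 - 1}=k_{-1}=-1$, which by the stated convention $b_{T}(q)=0$ for $q\notin\mathbb{N}_{0}$ vanish; likewise $\sum_{i=1}^{u}ir_{i}$ multiplies the vanishing $b_{T}(k_{n_{0}-1})$. The two remaining coefficients are $\sum_{i=0}^{q}(q-i+1)r_{i}$ and $\sum_{i=q+1}^{u}(q-i+1)r_{i}$. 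For $q\geq 1$ the second sum runs over $i\in\{q+1,\ldots,1\}$, which is empty, and the first gives $(q+1)+q = 2q+1$, matching $\ovb_{m}(m(mn+q))\equiv(2q+1)\ovb_{m}(mn)$. The case $q=0$ needs separate bookkeeping since then the second sum contributes the $i=1$ term; I would check that it combines with the first to again yield the coefficient $1$, consistent with $2q+1=1$.

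For the $\cc_{m}$ statement, with $u=0$ the second sum $\sum_{i=q+1}^{u}$ is always empty and the constant correction $\sum_{i=1}^{u}ir_{i}$ vanishes, so Theorem \ref{charact} collapses to $b_{T}(k_{k_{n}+q})\equiv b_{T}(k_{k_{n_{0}}-1}) + b_{T}(k_{n})(q+1)$. The one point requiring care is the constant $b_{T}(k_{k_{n_{0}}-1})=c_{m}(k_{k_1 - 1})=c_{m}(k_{0})=c_{m}(0)=1$, using $k_1 = m+1$ so $k_1 - 1 = m$, and then $k_m$ — here I must instead track the index arithmetic carefully, translating $k_{k_n+q}$ back into the argument $m(mn+q+1)+1$ claimed in the statement, and confirm the additive constant equals $1$. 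Substituting then gives $c_{m}(m(mn+q+1)+1)\equiv 1 + (q+1)c_{m}(mn+1)$.

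The main obstacle is purely the index translation: Theorem \ref{charact} is phrased in terms of the abstract $k_{k_n+q}$, and I must correctly convert these composite indices into the explicit arithmetic-progression arguments appearing in the target statements, especially handling the irregular first gap for $\cc_{m}$ (which forces $n_{0}=1$ rather than $0$) and the edge case $q=0$ for $\ovbb_{m}$. The arithmetic is routine once the conventions $b_{T}(q)=0$ for $q\notin\mathbb{N}_{0}$ and $k_{-n}=-n$ are applied consistently; I would verify the hypothesis $u<k_{n+1}-k_{n}$ holds for all $n\geq n_0 - 1$ in both cases (trivially, since the gaps equal $m\geq 2 > u$ in the relevant range), so Theorem \ref{charact} applies without further justification.
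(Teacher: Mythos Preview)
Your approach is essentially identical to the paper's: both parts are obtained by substituting the triples from Example~\ref{exple1} into Theorem~\ref{charact}, with the same identifications $n_{0}=1$, $u=0$ for $\cc_{m}$ and $n_{0}=0$, $u=1$ for $\ovbb_{m}$, and the same case split at $q=0$ for $\ovbb_{m}$. The one place where your write-up wobbles is the constant term for $\cc_{m}$: you first write $c_{m}(k_{k_{1}-1})=c_{m}(k_{0})$, which is wrong since $k_{1}-1=m$, not $0$; the correct index is $k_{m}=m^{2}+1$, and the paper evaluates $c_{m}(k_{m})=\sum_{i=0}^{m}c_{m}(i)=m+1\equiv 1\pmod m$ directly from the recurrence---this is the step you flag as needing care but do not actually carry out.
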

\begin{proof}
1) By Example \ref{exple1} we know that in this case $K$ is defined by $k_{0}=0$ and $k_{n}=mn+1$ for $n\geq 1$. Moreover, $L=(1,-1)$ and $R=(1)$ and thus $u=0$, $n_{0}=1$, $k_{n_{0}-1}=k_{0}=0$ and $k_{k_{n_{0}}-1}=k_{m}$. Hence:
\begin{displaymath}
\begin{split}
 c_{m}(k_{k_{n_{0}}-1})= & c_{m}(k_{m})=\sum_{i=0}^{m}c_{m}(i) \\ 
  = & (m+1)c_{m}(0)=m+1\equiv 1\pmod m, \\
 c_{m}(k_{n_{0}-1})\sum_{i=1}^{u}ir_{i}= & 0, \\
 c_{m}(k_{n})\sum_{i=0}^{q}(q-i+1)r_{i}= & c_{m}(mn+1)(q+1), \\
 c_{m}(k_{n-1})\sum_{i=q+1}^{u}(q-i+1)r_{i}= & 0.
\end{split}
\end{displaymath}
Taking the above together and using Theorem \ref{charact} we get the first part of the statement of the theorem. 

2) By Example \ref{exple1} we have $K=(mn)_{n=0}^{\infty}$, $L=(1,-1)$ and $R=(1,1)$. Hence, $n_{0}=0$, $k_{k_{n_{0}}-1}=k_{n_{0}-1}=-1$, $u=1$, $r_{0}=r_{1}=1$. By Theorem \ref{charact} we get:
\begin{displaymath}\begin{split}
\ovb_{m}(m(mn+q))\equiv &  
\left\{
\begin{array}{ll}
 \ovb_{m}(mn)(q+1)+\ovb_{m}(m(n-1))q, & q=0 \\
 \ovb_{m}(mn)(q+1+q), & q>0
\end{array}
\right. \\
 = & 
 \left\{
\begin{array}{ll}
 \ovb_{m}(mn), & q=0 \\
 (2q+1)\ovb_{m}(mn), & q>0
\end{array}
\right. \\
 = & (2q+1)\ovb_{m}(mn)\pmod m
\end{split}\end{displaymath}
and the result follows.
\end{proof}

Using the induction argument similar to the one from the end of the proof of Proposition \ref{charactb}, we get for the sequence $\cc_{m}$ the known formula found by Andrews {\it et. al.} (the equivalence of the formula presented in the paper of Andrews {\it et. al.} and the formula from Theorem \ref{charactcovb} was pointed out also by Ekhad and Zeilberger in \cite{Ekh}). Similarly, we get the following characterisation for the sequence~$\ovbb_{m}$.

\begin{col}\label{charactovbcol}
Let $m>1$ and $n\in\mathbb{N}_{0}$. If $n=a_{0}+a_{1}m+\ldots +a_{s}m^{s}$ where $0\leq a_{j}\leq m-1$ for $j=0,\ldots ,s$, then 
\begin{displaymath}
\ovb_{m}(mn)\equiv\prod_{j=0}^{s}(2a_{j}+1)\pmod m.
\end{displaymath}
\end{col}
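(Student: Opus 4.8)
The plan is to mimic the digit-peeling induction used in the proof of Proposition \ref{charactb}, but with the elementary recurrence step now supplied by part 2) of Theorem \ref{charactcovb}. Concretely, I would prove the formula by strong induction on $n$, at each stage stripping off the least significant base-$m$ digit of $n$.

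For the inductive step, write $n=mn'+a_{0}$ with $a_{0}\in\{0,\ldots,m-1\}$, so that $n'=a_{1}+a_{2}m+\ldots+a_{s}m^{s-1}$ carries the digits $a_{1},\ldots,a_{s}$ and satisfies $n'<n$. When $n'\geq 1$, Theorem \ref{charactcovb} part 2) (applied with $n'$ in place of $n$ and $q=a_{0}$) gives at once
\[
\ovb_{m}(mn)=\ovb_{m}(m(mn'+a_{0}))\equiv (2a_{0}+1)\,\ovb_{m}(mn')\pmod m,
\]
and the induction hypothesis $\ovb_{m}(mn')\equiv\prod_{j=1}^{s}(2a_{j}+1)$ then yields the claim after multiplying by the factor $(2a_{0}+1)$.

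The only point not covered by Theorem \ref{charactcovb} is the base of the induction: that theorem is stated for $n\geq 1$ and so cannot be applied to the most significant digit (which would require the forbidden ``$n'=0$''). I would therefore settle the case $n=a_{0}\in\{0,\ldots,m-1\}$ directly from the defining recurrence (\ref{rekurover}). Relation (\ref{rekurover}a) with $n=0$ gives $\ovb_{m}(0)=\ovb_{m}(1)=\ldots=\ovb_{m}(m-1)=1$; then for $1\leq q\leq m-1$ relation (\ref{rekurover}b), together with $\ovb_{m}(qm-1)=\ovb_{m}((q-1)m)$ (again by (\ref{rekurover}a), since $qm-1=(q-1)m+(m-1)$), gives
\[
\ovb_{m}(qm)=\ovb_{m}((q-1)m)+\ovb_{m}(q)+\ovb_{m}(q-1)=\ovb_{m}((q-1)m)+2,
\]
where the last equality uses $\ovb_{m}(q)=\ovb_{m}(q-1)=1$ for $q,q-1\in\{0,\ldots,m-1\}$. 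An immediate induction on $q$ from $\ovb_{m}(0)=1$ then gives $\ovb_{m}(qm)=2q+1$, which is exactly the desired factor. This anchors the main induction, and since a leading digit $a_{s}=0$ merely contributes a harmless factor $1$ to the product, the argument handles all admissible representations uniformly.

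The computations are entirely routine once Theorem \ref{charactcovb} is available; the one genuine subtlety—and the step I expect to need the most care—is precisely this base case, because the clean recurrence of Theorem \ref{charactcovb} excludes $n=0$ and hence the final (leading) digit must be evaluated separately and directly from (\ref{rekurover}).
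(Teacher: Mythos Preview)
Your proposal is correct and follows exactly the route the paper indicates: the paper states the corollary without a detailed proof, saying only that it follows ``similarly'' by the induction argument from Proposition~\ref{charactb} combined with Theorem~\ref{charactcovb}, and that is precisely what you carry out. Your explicit treatment of the base case $n\in\{0,\ldots,m-1\}$ via (\ref{rekurover}) is appropriate, since Theorem~\ref{charactcovb} is stated only for $n\geq 1$; this is a detail the paper leaves implicit.
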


\section{Sequences with $L=(1,-1)$ modulo powers of $\deg K$}

In this section we want to present a general method which can lead to the full characterisation of sequences from the previous chapter modulo any power of $m$.

For any $\alpha >\beta$ we use the convention:
\begin{displaymath}
\begin{split}
\sum_{i=\alpha}^{\beta}\text{something}(i)= & 0, \\
\prod_{i=\alpha}^{\beta}\text{something}(i)= & 
\left\{
\begin{array}{ll}
1, & \text{if }\alpha -\beta =1, \\
0, & \text{if }\alpha -\beta >1.
\end{array}
\right.
\end{split}
\end{displaymath}

Recall that in Lemma \ref{LemChar} we have shown the following identity:
\begin{equation}\label{identityb}
\begin{split}
b_{T}(k_{k_{n}+q})= & b_{T}(k_{k_{n_{0}}-1}) + b_{T}(k_{n_{0}-1})\sum_{i=1}^{u}ir_{i} + b_{T}(k_{n})\sum_{i=0}^{q}(q-i+1)r_{i} \\
 &  + b_{T}(k_{n-1})\sum_{i=q+1}^{u}(m+q-i+1)r_{i} + m\left(\sum_{i=0}^{u}r_{i}\right)\left(\sum_{j=n_{0}}^{n-1}b_{T}(k_{j})\right).
\end{split}
\end{equation}
By Theorem \ref{charact} we get the characterisation of $b_{T}(n)\mod m$ depending on base--$m$ representation of $n$ and thus we can use it to compute the sum $\sum_{j=n_{0}}^{n-1}b_{T}(k_{j})$ modulo $m$. Having this we can consider the equality (\ref{identityb}) modulo $m^{2}$. Then we can similarly use this characterisation modulo $m^{2}$ for the sum $\sum_{j=n_{0}}^{n-1}b_{T}(k_{j})$ and consider (\ref{identityb}) modulo $m^{3}$. Repeating this process we can get a characterisation of the behaviour of our sequence modulo any power of $m$. However, the computations are very difficult even in the easiest cases so we narrow our considerations down to the sequence of $m$--ary partitions modulo $\mu_{2}$ where:
\begin{displaymath}
\mu_{2}:=
\left\{ 
\begin{array}{ll}
m^{2} & \text{for $m$ odd}, \\
\frac{m^{2}}{2} & \text{for $m$ even}.
\end{array} 
\right.
\end{displaymath}

We have the following:

\begin{theorem}\label{TwCharpowersbm}
Let $m>1$ and $n\in\mathbb{N}_{0}$. If $n=a_{0}+a_{1}m+\ldots +a_{s}m^{s}$, where $0\leq a_{j}\leq m-1$ for $j=0,\ldots ,s$, then 
\begin{displaymath}
b_{m}(mn)\equiv\prod_{i=0}^{s}(a_{i}+1)+m\bigg(\sum_{i=1}^{s}\frac{a_{i}(a_{i}+1)}{2}\prod_{\substack{j=0, \\ j\neq i,i-1}}^{s}(a_{j}+1)\bigg)\pmod{\mu_{2}}.
\end{displaymath}
\end{theorem}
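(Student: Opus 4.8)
The plan is to prove the formula by induction on the number of base-$m$ digits of $n$, exactly mirroring the structure of the proof of Proposition \ref{charactb} but carrying the computation modulo $\mu_2$ instead of modulo $m$. The starting point is the identity from Lemma \ref{LemChar}. For the sequence $\bb_m$ we have $K=(mn)_{n=0}^{\infty}$, $L=(1,-1)$, $R=(1)$, so that $u=0$, $r_0=1$, $n_0=0$, and the identity (\ref{identityb}) collapses to the clean recurrence
\begin{displaymath}
b_m(m(mn+q)) = (q+1)b_m(mn) + m\sum_{j=0}^{n-1}b_m(mj)
\end{displaymath}
for $q\in\{0,\dots,m-1\}$ (this is essentially the displayed computation in the proof of Proposition \ref{charactb}, before reducing mod $m$). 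Writing $n=a_0+a_1m+\dots+a_sm^s$ with $a_0=q$, I would treat this as the key step that drives the induction: it expresses $b_m(mn)$ in terms of $b_m(mn')$ with $n'=\lfloor n/m\rfloor$ plus the correction term $m\sum_{j=0}^{n'-1}b_m(mj)$, and the whole subtlety is that this correction term, negligible mod $m$, must now be controlled mod $\mu_2$.

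The crux of the argument is therefore to evaluate $\sum_{j=0}^{n'-1}b_m(mj) \bmod m$, because it is multiplied by $m$ and so only its residue mod $m$ (or mod $2$ when $m$ is even, to reach $\mu_2=m^2/2$) matters. Here I would invoke the already-proved characterisation modulo $m$, namely Proposition \ref{charactb}, which gives $b_m(mj)\equiv\prod_{\ell}(c_\ell+1)\pmod m$ for $j=\sum_\ell c_\ell m^\ell$. Summing this over $j$ in a range of the form $0\le j< n'$ becomes a sum over the base-$m$ digits; grouping the digits $j_1,\dots$ of $j$, the sum $\sum_{j<n'}\prod(c_\ell+1)$ factors digit-by-digit and telescopes into products of the triangular-number type $\sum_{c=0}^{a-1}(c+1)=\frac{a(a+1)}{2}$. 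This is exactly where the factor $\frac{a_i(a_i+1)}{2}$ in the claimed formula originates, and the product $\prod_{j\neq i,i-1}(a_j+1)$ reflects which digit slots are "consumed" by the triangular sum at position $i$ versus the leading factor attached at position $i-1$.

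The main obstacle I expect is the bookkeeping in this digit-sum evaluation and in assembling the induction: one must verify that when the recurrence is iterated $s+1$ times, the accumulated correction terms add up precisely to $m\sum_{i=1}^{s}\frac{a_i(a_i+1)}{2}\prod_{j\neq i,i-1}(a_j+1)$, with the index shift by one (the $j\neq i-1$ exclusion) handled correctly. Concretely I would set up the induction hypothesis as the stated formula for $b_m(mn')$ with $n'$ having $s$ digits, substitute into the collapsed recurrence, and show the new leading product $\prod_{i=0}^s(a_i+1)$ arises from $(a_0+1)\prod_{i=1}^{s}(a_i+1)$ while the new $i=1$ term of the sum is produced by the correction $m\sum_{j<n'}b_m(mj)$ evaluated mod $m$ and the old sum terms get their extra factor $(a_0+1)$ from the multiplication by $(q+1)=(a_0+1)$. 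A secondary point requiring care is the even-$m$ case: since $m\cdot\frac{a_i(a_i+1)}{2}$ need only be well-defined mod $\mu_2=m^2/2$, one checks that $\frac{a_i(a_i+1)}{2}$ is an integer (true, as $a_i(a_i+1)$ is always even) so that all terms make sense, and that the congruence is consistent with reducing mod $\mu_2$ rather than $m^2$. I would close by confirming the base case $s=0$, where both sides equal $a_0+1$ and the sum is empty.
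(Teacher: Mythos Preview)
Your proposal is correct and follows essentially the same route as the paper. Both arguments rest on the same ingredients: the collapsed form of identity (\ref{identityb}) for $\bb_m$, namely $b_m(m(mn'+q))=(q+1)b_m(mn')+m\sum_{j=0}^{n'-1}b_m(mj)$; the evaluation of the correction sum via Proposition~\ref{charactb}; and the observation that a full block $\sum_{i=mh}^{mh+m-1}b_m(mi)$ contributes $\tfrac{m(m\pm 1)}{2}\,b_m(mh)\equiv 0$ modulo $\mu_2/m$, so that only the partial block survives and produces the triangular factor $\tfrac{a_t(a_t+1)}{2}$. The paper unrolls the recurrence completely first and then evaluates each of the $s$ correction sums in one pass, whereas you phrase the same computation as an induction on the number of digits, generating one new term (the $i=1$ term) at each step while the factor $(a_0+1)$ carries the old terms forward. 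These are equivalent presentations of the same proof.

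One small slip: in the even case you write that only the residue of the correction sum ``mod $2$'' matters. Since $\mu_2=m^2/2$, multiplying by $m$ means you need the sum modulo $m/2$, not modulo $2$. This does not affect the argument --- the full-block vanishing and the partial-block evaluation both hold modulo $m$, hence a fortiori modulo $m/2$ --- but it is worth stating the correct modulus when you write it up.
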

\begin{proof}
Write $n=a_{0}+a_{1}m+\ldots +a_{s}m^{s}$. The identity (\ref{identityb}) implies:
\begin{align*}
b_{m}(mn)= & b_{m}(a_{0}m+a_{1}m^{2}+\ldots +a_{s}m^{s+1}) \\ 
 = & (a_{0}+1)b_{m}(a_{1}m+a_{2}m^{2}\ldots +a_{s}m^{s})+m\sum_{i=0}^{a_{1}+a_{2}m\ldots +a_{s}m^{s-1}-1}b_{m}(mi) \\
 = & (a_{0}+1)(a_{1}+1)b_{m}(a_{2}m+a_{3}m^{2}\ldots +a_{s}m^{s-1}) \\
   & +m\bigg(\sum_{i=0}^{a_{1}+a_{2}m\ldots +a_{s}m^{s-1}-1}b_{m}(mi)+(a_{0}+1)\sum_{i=0}^{a_{2}+a_{3}m\ldots +a_{s}m^{s-2}-1}b_{m}(mi)\bigg) \\
 = & \ldots =(a_{0}+1)(a_{1}+1)\ldots (a_{s}+1) \\
   & +m\bigg(\sum_{i=0}^{a_{1}+a_{2}m\ldots +a_{s}m^{s-1}-1}b_{m}(mi)+(a_{0}+1)\sum_{i=0}^{a_{2}+a_{3}m\ldots +a_{s}m^{s-2}-1}b_{m}(mi) \\
   & +\ldots +\prod_{j=0}^{s-3}(a_{j}+1)\sum_{i=0}^{a_{s-1}+a_{s}m-1}b_{m}(mi) +\prod_{j=0}^{r-2}(a_{j}+1)\sum_{i=0}^{a_{s}-1}b_{m}(mi)\bigg).
\end{align*}
By Theorem \ref{charact} we get for any natural number $h$:
\begin{displaymath}
\sum_{i=mh}^{mh+m-1}b_{m}(mi) \equiv \sum_{i=0}^{m-1}(i+1)b_{m}(mh) = \frac{m(m+1)}{2}b_{m}(mh) \equiv 0\pmod{\mu_{2}}.
\end{displaymath}
Hence, Proposition \ref{charactb} implies for any $t=1,\ldots ,s$ the following congruences are true:
\begin{displaymath}
\begin{split}
\sum_{i=0}^{a_{t}+\ldots +a_{s}m^{s-t}-1}b_{m}(mi) 
 = & \sum_{i=a_{t+1}m\ldots +a_{s}m^{s-t}}^{a_{t}-1+a_{t+1}m+\ldots +a_{s}m^{s-t}}b_{m}(mi)+\sum_{h=0}^{a_{t+1}+\ldots +a_{s}m^{s-t-1}}\sum_{i=mh}^{mh+m-1}b_{m}(mi) \\
 \equiv & \sum_{i=0}^{a_{t}-1}(i+1)(a_{t+1}+1)\ldots (a_{s}+1)\\
 \equiv & \frac{a_{t}(a_{t}+1)}{2}(a_{t+1}+1)\ldots (a_{s}+1)\pmod{\mu_{2}}.
\end{split}
\end{displaymath}
Taking everything together:
\begin{displaymath}
\begin{split}
b_{m}(a_{0}m+a_{1} & m^{2}+\ldots +a_{s}m^{s+1})  \equiv(a_{0}+1)(a_{1}+1)\ldots (a_{s}+1) \\
   & +m\bigg(\frac{a_{1}(a_{1}+1)}{2}(a_{2}+1)\ldots (a_{s}+1)+(a_{0}+1)\frac{a_{2}(a_{2}+1)}{2}(a_{3}+1)\ldots (a_{s}+1) \\
   & +\ldots +(a_{0}+1)\ldots (a_{s-3}+1)\frac{a_{s-1}(a_{s-1}+1)}{2}(a_{s}+1) \\ &+(a_{0}+1)\ldots (a_{s-2}+1)\frac{a_{s}(a_{s}+1)}{2}\bigg)\pmod{\mu_{2}}
\end{split}
\end{displaymath}
and the result follows.
\end{proof}

In the case of the sequence $\ovbb_{m}$ we give only a partial result concerning numbers $\ovb_{m}(n)\mod{m^{2}}$ for $n$ which have no zeros in their base--$m$ representation.

\begin{theorem}\label{TwCharpowersovbm}
Let $m>1$ and $n\in\mathbb{N}_{0}$. If $n=a_{0}+a_{1}m+\ldots +a_{s}m^{s}$ where $1\leq a_{j}\leq m-1$ for $j=0,\ldots ,s$, then 
\begin{displaymath}
\ovb_{m}(mn)\equiv \prod_{i=0}^{s}(2a_{i}+1)+2m\bigg(\sum_{i=1}^{s}a_{i}^{2}\prod_{\substack{j=0, \\ j\neq i,i-1}}^{s}(2a_{j}+1)\bigg)\pmod{m^{2}}.
\end{displaymath}
\end{theorem}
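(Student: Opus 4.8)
The plan is to mirror the proof of Theorem \ref{TwCharpowersbm} as closely as possible, replacing the sequence $\bb_m$ with $\ovbb_m$ throughout. The starting point is the identity (\ref{identityb}) specialised to the triple $T$ giving $\ovbb_m$, i.e.\ $K=(mn)_{n=0}^{\infty}$, $L=(1,-1)$ and $R=(1,1)$ (so $u=1$, $r_0=r_1=1$, $n_0=0$). With these data (\ref{identityb}) reduces to a recursion expressing $\ovb_m(mn)$ in terms of $\ovb_m(m\lfloor n/m\rfloor)$, $\ovb_m(m(\lfloor n/m\rfloor-1))$ and a sum $\sum \ovb_m(mj)$; iterating the digit-peeling exactly as in Theorem \ref{TwCharpowersbm} will produce the leading term $\prod_{i=0}^s(2a_i+1)$ (which is Corollary \ref{charactovbcol}, the modulo-$m$ answer) plus a correction of order $m$ coming from the accumulated sums.

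First I would write the specialised identity and unwind it one digit at a time, keeping track of the $O(m)$ correction. The main computational input is, as before, a modulo-$m^2$ evaluation of the inner sums $\sum_{i=0}^{a_t+\dots+a_sm^{s-t}-1}\ovb_m(mi)$. The key lemma to establish here is the analogue of the vanishing observed in the proof of Theorem \ref{TwCharpowersbm}, namely a formula for $\sum_{i=mh}^{mh+m-1}\ovb_m(mi)\pmod{m^2}$; using Theorem \ref{charactcovb}(2), which gives $\ovb_m(m(mh+q))\equiv(2q+1)\ovb_m(mh)\pmod m$, one evaluates $\sum_{q=0}^{m-1}(2q+1)=m^2$, so the full block sum is $\equiv m^2\ovb_m(mh)\pmod{?}$. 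This is exactly the step where the two cases of $m$ diverge, and it explains the appearance of $m^2$ in Theorem \ref{TwCharpowersbm} and of $2m$ (rather than $m$) in the coefficient here: because the digit-sum $\sum_q(2q+1)=m^2$ contributes a clean multiple of $m^2$, the block sums vanish modulo $m^2$ and only the partial (incomplete) blocks survive.

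The partial-block contribution is the combinatorial heart of the calculation. By Corollary \ref{charactovbcol} one replaces each $\ovb_m(mi)$ in an incomplete block by its product formula $\prod(2a_j+1)$, and the surviving sum over the lowest free digit becomes $\sum_{i=0}^{a_t-1}(2i+1)=a_t^2$, which is the source of the factor $a_i^2$ in the stated answer (contrast with $\sum_{i=0}^{a_t-1}(i+1)=a_t(a_t+1)/2$ in the $\bb_m$ case). Assembling the $s$ partial blocks, each weighted by the appropriate product $\prod_{j\neq i,i-1}(2a_j+1)$, yields the correction term $2m\sum_{i=1}^s a_i^2\prod_{j\neq i,i-1}(2a_j+1)$, and collecting the leading product gives the full statement.

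I expect the main obstacle to be bookkeeping rather than conceptual: one must verify that every block sum really is $\equiv 0\pmod{m^2}$ so that only incomplete blocks contribute, and track the two-index gap $j\neq i,i-1$ correctly (it arises because peeling digit $a_i$ consumes both $\ovb_m(mn)$ and its shifted neighbour through $R=(1,1)$). The hypothesis $1\le a_j\le m-1$ — no zero digits — is what makes the recursion stay in the regime where Theorem \ref{charact} and Corollary \ref{charactovbcol} apply cleanly at each step, and I would check carefully that this restriction is exactly what keeps the $j=i-1$ neighbour term from producing an extra boundary contribution that would otherwise spoil the formula.
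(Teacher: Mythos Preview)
Your plan is correct and follows essentially the same route as the paper's proof: specialise identity (\ref{identityb}) to $\ovbb_m$, iterate the digit-peeling to produce the leading product plus a correction $2m\sum_t\bigl(\prod_{j=0}^{t-2}(2a_j+1)\bigr)\sum_i\ovb_m(mi)$, kill the full blocks, and evaluate the partial blocks via Corollary \ref{charactovbcol} to obtain the factors $a_t^2$. One small correction on the modulus bookkeeping: the full block sums $\sum_{i=mh}^{mh+m-1}\ovb_m(mi)$ do \emph{not} vanish modulo $m^2$ (the congruence $\ovb_m(m(mh+q))\equiv(2q+1)\ovb_m(mh)$ from Theorem \ref{charactcovb}(2) holds only modulo $m$, so summing gives $\equiv m^2\ovb_m(mh)\equiv 0\pmod m$, not $\pmod{m^2}$); it is the prefactor $2m$ already present in the iterated identity that lifts this to a vanishing contribution modulo $m^2$. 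This is exactly the bookkeeping you flagged as the main obstacle, and once stated correctly the rest of your outline matches the paper line for line.
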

\begin{proof}
The equality (\ref{identityb}) implies:
\begin{align*}
\ovb_{m}(m(a_{0}+\ldots  & +a_{s}m^{s}))\\
 = & (2a_{0}+1)\ovb_{m}(a_{1}m+a_{2}m^{2}\ldots +a_{s}m^{s})+2m\sum_{i=0}^{a_{1}+a_{2}m+\ldots +a_{s}m^{s-1}-1}\ovb_{m}(mi) \\
 = & (2a_{0}+1)(2a_{1}+1)\ovb_{m}(a_{2}m+a_{3}m^{2}+\ldots +a_{s}m^{s-1}) \\
   & +2m\bigg(\sum_{i=0}^{a_{1}+a_{2}m\ldots +a_{s}m^{s-1}-1}\ovb_{m}(mi)+(2a_{0}+1)\sum_{i=0}^{a_{2}+a_{3}m\ldots +a_{s}m^{s-2}-1}\ovb_{m}(mi)\bigg) \\
 = & \ldots =(2a_{0}+1)(2a_{1}+1)\ldots (2a_{s}+1) \\
   & +2m\bigg(\sum_{i=0}^{a_{1}+a_{2}m\ldots +a_{s}m^{s-1}-1}\ovb_{m}(mi)+(2a_{0}+1)\sum_{i=0}^{a_{2}+a_{3}m+\ldots +a_{s}m^{s-2}-1}\ovb_{m}(mi) \\
   & +\ldots +\prod_{j=0}^{s-3}(2a_{j}+1)\sum_{i=0}^{a_{s-1}+a_{s}m-1}\ovb_{m}(mi)+\prod_{j=0}^{s-2}(2a_{j}+1)\sum_{i=0}^{a_{s}-1}\ovb_{m}(mi)\bigg).
\end{align*}
By Theorem \ref{charact} we get for any natural number $h$:
\begin{displaymath}
\sum_{i=mh}^{mh+m-1}\ovb_{m}(mi) \equiv \sum_{i=0}^{m-1}(2i+1)\ovb_{m}(mh) = m^{2}\ovb_{m}(mh) \equiv 0\pmod{m}.
\end{displaymath}
Hence, Theorem \ref{charactcovb} implies for any $t=1,\ldots ,s$:
\begin{displaymath}
\begin{split}
\sum_{i=0}^{a_{t}+\ldots +a_{s}m^{s-t}-1}\ovb_{m}(mi) 
 = & \sum_{i=a_{t+1}m+\ldots +a_{s}m^{s-t}}^{a_{t}-1+a_{t+1}m+\ldots +a_{s}m^{s-t}}\ovb_{m}(mi)+\sum_{h=0}^{a_{t+1}+\ldots +a_{s}m^{s-t-1}}\sum_{i=mh}^{mh+m-1}\ovb_{m}(mi) \\
 \equiv & \sum_{i=0}^{a_{t}-1}(2i+1)(2a_{t+1}+1)\ldots (2a_{s}+1)\\
 \equiv & a_{t}^{2}(2a_{t+1}+1)\ldots (2a_{s}+1)\pmod{m}.
\end{split}
\end{displaymath}
Taking everything together:
\begin{displaymath}
\begin{split}
\ovb_{m}( & a_{0}m+a_{1}m^{2}+\ldots +a_{s}m^{s+1})\\
 = & \ldots =(2a_{0}+1)(2a_{1}+1)\ldots (2a_{s}+1) \\
   & +2m\bigg(\sum_{i=0}^{a_{1}+a_{2}m\ldots +a_{s}m^{s-1}-1}\ovb_{m}(mi)+(2a_{0}+1)\sum_{i=0}^{a_{2}+a_{3}m\ldots +a_{s}m^{s-2}-1}\ovb_{m}(mi) \\
   & +\ldots +\prod_{j=0}^{s-3}(2a_{j}+1)\sum_{i=0}^{a_{s-1}+a_{s}m-1}\ovb_{m}(mi)+\prod_{j=0}^{s-2}(2a_{j}+1)\sum_{i=0}^{a_{s}-1}\ovb_{m}(mi)\bigg) \\
 \equiv & (2a_{0}+1)(2a_{1}+1)\ldots (2a_{s}+1)+2m\bigg(\sum_{i=1}^{s}a_{i}^{2}\prod_{\substack{j=0, \\ j\neq i,i-1}}^{s}(2a_{j}+1)\bigg) \pmod{m^{2}}.
\end{split}
\end{displaymath}
The result follows.
\end{proof}

The problems with numbers $n$ having at least one digit equals to zero in the base--$m$ representation follow from the fact that for them the formula (\ref{identityb}) looks differently than for other numbers.

As a simple corollary from Theorem \ref{TwCharpowersovbm} we get a stronger version of Corollary \ref{charactovbcol} for even $m$ and numbers $n$ which have no zeros in base--$m$ representation.

\begin{col}
Let $m>1$ be even and $n\in\mathbb{N}_{0}$. If $n=a_{0}+a_{1}m+\ldots +a_{s}m^{s}$ where $1\leq a_{j}\leq m-1$ for $j=0,\ldots ,s$, then 
\begin{displaymath}
\ovb_{m}(mn)\equiv\prod_{j=0}^{s}(2a_{j}+1)\pmod{2m}.
\end{displaymath}
\end{col}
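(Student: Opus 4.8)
The plan is to deduce this corollary directly from Theorem \ref{TwCharpowersovbm}, with essentially no new work. The key observation is arithmetic: since $m$ is even, write $m=2m'$, so that $m^{2}=4m'^{2}=(2m)\cdot m'$. Hence $2m$ divides $m^{2}$, and therefore any congruence that holds modulo $m^{2}$ holds \emph{a fortiori} modulo $2m$. I would first check that the hypotheses match: Theorem \ref{TwCharpowersovbm} requires $1\leq a_{j}\leq m-1$ for all $j$, i.e. that $n$ has no zero digits in its base--$m$ representation, and this is exactly the assumption of the present corollary.

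The next step is to take the conclusion of Theorem \ref{TwCharpowersovbm}, namely
\begin{displaymath}
\ovb_{m}(mn)\equiv \prod_{i=0}^{s}(2a_{i}+1)+2m\bigg(\sum_{i=1}^{s}a_{i}^{2}\prod_{\substack{j=0, \\ j\neq i,i-1}}^{s}(2a_{j}+1)\bigg)\pmod{m^{2}},
\end{displaymath}
and to read it modulo $2m$ instead, which is legitimate by the divisibility $2m\mid m^{2}$ just noted. Then I would observe that the entire correction term carries an explicit factor of $2m$ in front, so it is $\equiv 0\pmod{2m}$ regardless of the value of the sum in the large parentheses. Discarding it leaves precisely
\begin{displaymath}
\ovb_{m}(mn)\equiv\prod_{j=0}^{s}(2a_{j}+1)\pmod{2m},
\end{displaymath}
which is the assertion.

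There is no real obstacle here; the corollary is a clean reduction. The only point that genuinely uses the parity of $m$ is the divisibility $2m\mid m^{2}$, and I would flag it explicitly, since for odd $m$ it fails (there $m^{2}/(2m)$ is not an integer) and the correction term need not vanish modulo $2m$. This is exactly why the sharpening to modulus $2m$ is stated only in the even case.
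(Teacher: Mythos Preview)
Your argument is correct and is exactly the approach taken in the paper: one notes that for even $m$ the modulus $2m$ divides $m^{2}$, reduces the congruence of Theorem \ref{TwCharpowersovbm} modulo $2m$, and observes that the correction term carries an explicit factor $2m$ and hence vanishes. The paper's own proof is simply the one-line version of what you wrote.
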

\begin{proof}
If $m$ is even, then $2m$ divides $m^{2}$ and the result follows from Theorem \ref{TwCharpowersovbm}.
\end{proof}

\section{Problems and conjectures}

In this section we want to present some open problems and state some conjectures. We believe that their solutions allow us to better understand congruence properties of recurrence sequences connected with partition functions of the considered types.

The first two problems are strictly connected with the first result of this paper.

\begin{pbm}
Is it true that at most one of the conditions from {\rm Theorem \ref{TwMain} } can be satisfied? 
\end{pbm}

\begin{rem}
We showed that this is true if $L=(1,-1)$ and $d$ is given by the special formula (\ref{seqd}).
\end{rem}

\begin{pbm}
Can we skip the assumption $h\leq\degl K-u+1$ in {\rm Theorem \ref{TwMain} }? 
\end{pbm}

\begin{rem}
The computations from the previous Section and Theorem 5 and related results may suggest that this is possible at least in some special cases.
\end{rem}

Alkauskas in his paper gave the following conjecture.

\begin{con}{\sc (Alkauskas)}\label{con1}
For any $h\in\mathbb{N}$, $8\nmid h$, there exist infinitely many $n$ such that $b_{2}(n)$ is divisible by $h$.
\end{con}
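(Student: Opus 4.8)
The plan is to reduce Conjecture \ref{con1} to a uniform finiteness statement for the ranks of Section 4, and then to upgrade a single solution into infinitely many, exactly as in the passage from Theorem \ref{TwLd} to Corollary \ref{colb}. For $\bb_{2}$ we have $\deg K=2$, $L=(1,-1)$, $R=(1)$, $r=1$ and $u=0$, and the remark after the rank theorem shows that the sets $\lvl(n,2,s)$ stabilise: for fixed $s$ they consist of the same linear forms in $c_{1},\ldots ,c_{s}$ for all large $n$. Hence if the appropriate rank is finite for every large $n$, each window $S(n,2,s)$ contains an index $k$ with $h\mid b_{2}(k)$; taking $n_{1}<n_{2}<\cdots$ with gaps exceeding $2^{s}$ makes these windows disjoint and yields infinitely many solutions. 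It therefore suffices to prove: for every $h$ with $8\nmid h$ there is an $s$ with an element of $\lvl(n,2,s)$ divisible by $h$, for all large $n$.

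The governing structure is that the top--level formulas are affine in the newest variable: reading off \eqref{equ1}--\eqref{equ2}, the level--$s$ formulas split into $\sim 2^{s}$ forms $c_{s}+L_{t}(c_{1},\ldots ,c_{s-1})$ together with the lower forms inherited from $\lvl(n,2,s-1)$, where each $L_{t}$ is an explicit integer combination with binomial--type coefficients. The general rank is at most $s$ precisely when, for every admissible $(c_{1},\ldots ,c_{s-1})$, the residues $\{-L_{t}(c_{1},\ldots ,c_{s-1})\bmod h\}_{t}$ exhaust $\mathbb{Z}/h\mathbb{Z}$, for then no value of $c_{s}=b_{2}(\cdot)$ can avoid a zero. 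I would treat $h$ odd first and try to prove this surjectivity directly, by controlling the coefficient vectors generated by iterating \eqref{rekur}: unlike the case of large $\deg K$, for $m=2$ a single level produces only short progressions (this is why Theorem \ref{TwMain} stops at $h\le m+1=3$), so the covering must be assembled across many levels. Organising the analysis modulo each prime power $p^{k}\,\|\,h$ is natural, but the real content is the \emph{simultaneous} covering across all prime--power factors, which I would deduce from a richness statement showing that the family $\{L_{t}\}$ becomes jointly surjective once $s$ is large.

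For $2\,\|\,h$ and $4\,\|\,h$ the free--variable surjectivity provably fails --- this is the phenomenon in the Question, where $\rk_{g}(n,2,4h)=\infty$ is witnessed by $c_{1}=c_{2}=\cdots =1$ --- so here the $c_{i}$ cannot be treated as free and one must inject the true arithmetic of $\bb_{2}$. The inputs are Churchhouse's theorem \cite{Chu} (so $\nu_{2}(b_{2}(n))\le 2$, whence $8\nmid h$ is forced and the $2$--part of $h$ is $1$, $2$ or $4$) together with the description of $b_{2}(n)\bmod 2^{5}$ of Alkauskas \cite{Alk}; these constrain the admissible residues of the variables $c_{i}$ modulo the $2$--part of $h$. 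Restricting to this affine set of admissible tuples, I would show the level forms again cover $0$ modulo the $2$--part, and combine with the odd--part covering via the Chinese Remainder Theorem to obtain an index divisible by the full $h$. This is exactly the mechanism already visible in the example where $\rk_{g}(n,T,4)=\infty$ while $\rk(n,2,4)\le 2$.

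The hard part is the uniform surjectivity of $\{L_{t}\}$ over $\mathbb{Z}/h\mathbb{Z}$. For any fixed $h$ it is a finite verification --- this is how Theorem \ref{twcom} was obtained --- but proving it for \emph{every} $h$ demands understanding the joint distribution modulo $h$ of the binomial coefficients produced by iterating \eqref{rekur}, and showing that the number of residues they realise grows to fill $\mathbb{Z}/h\mathbb{Z}$ as $s\to\infty$, both one modulus at a time and jointly across the factors of $h$. A crude union bound is far too weak --- it even points the wrong way --- so any proof must exploit genuine cancellation and the self--similar, $2$--automatic structure of $(b_{2}(n)\bmod h)_{n}$ that makes the sets $\lvl(n,2,s)$ stabilise. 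Gaining this control uniformly in $h$, rather than modulus by modulus, is the essential obstacle, and the reason the statement remains conjectural.
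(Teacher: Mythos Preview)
The statement you are attempting is labelled a \emph{Conjecture} in the paper (attributed to Alkauskas) and is listed among the open problems of Section~7; the paper gives no proof of it, only the partial verifications of Theorem~\ref{twcom} for $h\le 41$ (and a few further values) obtained by the finite rank computations of Section~4. So there is no proof in the paper to compare your proposal against.

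Your write-up is not a proof but a strategic outline, and you yourself say so in the final paragraph: the crux is the uniform-in-$h$ surjectivity of the families $\{L_{t}\}$ modulo $h$, and you give no argument for it. That is precisely the barrier the paper leaves open. A couple of further remarks on the outline itself. First, your ``disjoint windows'' step is not quite right as stated: the set $S(n,2,s)$ ranges from indices near $2n$ up to indices near $2^{s}n$, so consecutive windows overlap heavily; what actually gives infinitely many solutions is that $\min S(n,2,s)=2n\to\infty$, so any uniform rank bound forces arbitrarily large solutions. Second, the Chinese Remainder step is more delicate than you indicate: you need a \emph{single} index in $\lvl(n,2,s)$ that is $0$ simultaneously modulo the odd part and modulo the $2$-part of $h$, and CRT on the moduli does not by itself guarantee that the two zero-sets intersect. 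You acknowledge this (``jointly across the factors of $h$''), but it is a genuine extra difficulty on top of the already-unproved odd-$h$ covering. In short, the proposal correctly identifies the architecture the paper's Section~4 suggests, and correctly names the obstruction; it does not overcome it, and the statement remains conjectural.
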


In regard to the previous deliberations we can extend this as follows.

\begin{con}
Let $L=(1,-1)$ and $R=(1)$. If $\degl K\geq 2$ then for any $h\in\mathbb{N}$, $4\nmid h$, there exist infinitely many $n$ such that $b_{T}(n)$ is divisible by $l$.
\end{con}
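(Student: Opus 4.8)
The plan is to show that, under the hypotheses $L=(1,-1)$, $R=(1)$ and $\degl K\geq 2$, the general rank $\rk_{g}(n,T,h)$ is finite for every $h$ with $4\nmid h$ and every sufficiently large $n$; the existence of infinitely many $n$ with $h\mid b_{T}(n)$ then follows from the theorem relating the rank to divisibility (since $\rk(n,T,h)\leq\rk_{g}(n,T,h)$, finiteness of the general rank forces one of the numbers $b_{T}(k)$ with $k\in S(n,T,\rk(n,T,h))$ to be divisible by $h$), together with the observation that the index sets $S(n,T,\cdot)$ reach arbitrarily large indices as $n$ grows, producing infinitely many distinct solutions. Since $r=\sum_{j}r_{j}=1$, the coprimality hypothesis $\gcd(h,r)=1$ is automatic, so no number--theoretic restriction beyond $4\nmid h$ is needed. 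A first reduction is to pass to block lengths that occur infinitely often: because $\degl K\geq 2$, blocks of length $1$ occur only finitely often, and the level construction may be started past the last such block. Recall also that for $L=(1,-1)$ the auxiliary sequence $\dd_{T}$ of (\ref{seqd}) coincides with $\bb_{T}$ itself, so the level formulas are genuine formulas for the values of $\bb_{T}$.

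Next I would make the level values explicit. For the model case $k_{n}=2n$ the computations (\ref{equ1})--(\ref{equ2}) show that the element of $\lvl(n,2,s)$ attached to the index $2^{s}n+2t$ is a linear form $\sum_{i=1}^{s}\alpha_{s,t,i}\,c_{i}$ whose coefficients $\alpha_{s,t,i}$ are iterated partial sums dictated by $b_{T}(2N)=b_{T}(2N-2)+b_{T}(N)$; these coefficients are binomial--type numbers, and the general case $\degl K\geq 2$ produces forms of the same shape with the block lengths governing the ranges of summation. In this language $\rk_{g}(n,T,h)<\infty$ becomes a purely combinatorial \emph{covering} assertion: for some level $s$, every assignment $(c_{1},\dots,c_{s})\in(\mathbb{Z}/h\mathbb{Z})^{s}$ makes at least one of the forms $\sum_{i}\alpha_{s,t,i}c_{i}$ vanish modulo $h$, equivalently the tree of assignments avoiding $0$ modulo $h$ at every level is finite.

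To attack the covering assertion I would first dispose of the odd part of $h$. Writing $h=2^{\varepsilon}q$ with $\varepsilon\in\{0,1\}$ and $q$ odd, the odd prime power factors should be the easier ones: modulo an odd prime power the coefficients $\alpha_{s,t,i}$ are binomial numbers whose vanishing is governed by the theorems of Lucas and Kummer, and, in the spirit of Theorem \ref{TwLd} where a run of equal residues forces a zero at the next scale, the growth in the number of available forms with the level should force a zero for \emph{any} assignment of the $c_{i}$. Modulo $2$ the situation is even more rigid but also favourable, the parity of the forms being essentially fixed by the recurrence.

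The main obstacle is exactly what the excluded case $4\mid h$ reveals, and it appears in two guises. First, the Chinese Remainder Theorem does \emph{not} reduce the problem to the prime power factors separately: one and the same assignment $(c_{1},\dots,c_{s})$ must simultaneously zero one and the same form modulo $2^{\varepsilon}$ and modulo $q$, so the two coverings have to be realised by a common index $t$, and controlling this joint condition requires the higher levels to supply enough distinct forms with prescribed behaviour modulo both factors. Second, and more seriously, is the $2$--adic rigidity of the coefficients: for the all--ones assignment $c_{1}=c_{2}=\cdots=1$ the forms reduce to the partial sums of the binomial coefficients, and a direct check (already visible in (\ref{equ2}), where none of the listed values is $\equiv 0\pmod 4$) shows that these sums never vanish modulo $4$; this is precisely the phenomenon behind the question raised earlier in the paper and behind the Example exhibiting a sequence with $b_{T}(4n+2)$ always odd, and it is the reason the hypothesis reads $4\nmid h$. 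Proving that no analogous surviving assignment exists once $4\nmid h$ is the crux, and it is here that a genuinely new $2$--adic input beyond the present computations would be required; this is why the statement is recorded as a conjecture rather than a theorem.
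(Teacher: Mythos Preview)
The statement you are addressing is a \emph{conjecture} in the paper, not a theorem; the paper offers no proof of it, and indeed you yourself acknowledge in your final paragraph that ``a genuinely new $2$--adic input beyond the present computations would be required; this is why the statement is recorded as a conjecture rather than a theorem.'' So there is no proof in the paper to compare against, and your proposal is not a proof either but rather a programme.

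That said, your programme is well aligned with the paper's own machinery: the level/rank apparatus of Section~4 is precisely the framework the paper uses to obtain the computational evidence (Theorems \ref{comgen} and \ref{comgen2}) supporting the conjecture, and your identification of the obstruction --- the failure of the covering assertion modulo $4$, witnessed by the all--ones assignment $c_{1}=c_{2}=\cdots=1$ --- matches exactly what the paper singles out in the Question at the end of Section~4 and in the Example preceding it. Your reduction via $\rk(n,T,h)\leq\rk_{g}(n,T,h)$ and the growth of the index sets $S(n,T,\cdot)$ is also the right logical skeleton. One caution: your appeal to the Chinese Remainder Theorem is correctly flagged as \emph{not} yielding a straightforward reduction, but your sketch for odd prime powers (``Lucas and Kummer should handle it'') is optimistic --- the paper's tables in Appendix~C show that even for odd $h$ and $\deg K=2$ the general rank grows with $h$ in a way that has not been bounded uniformly, so this step is itself open, not merely the joint $2$--versus--odd covering.

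Finally, note the evident typo in the statement: ``divisible by $l$'' should read ``divisible by $h$'', since here $l=\sum_{i}l_{i}=1+(-1)=0$.
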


Observe that if we assumed more, i.e., $\degl K\geq 3$, we would get much more elements in the set $\lvl(n,K,h)$ then in the case $\degl K=2$ for any numbers $h$ and $n\gg 1$. We believe that this is enough to omit values of $h$ divisible by $4$. Hence, we state the next conjecture.

\begin{con}\label{Con}
If $\degl K\geq 3$, then for any $h\in\mathbb{N}$ there exist infinitely many $n$ such that $b_{T}(n)$ is divisible by $h$.
\end{con}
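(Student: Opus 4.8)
The plan is to reduce the statement to a purely combinatorial claim about the level sets and then to attack that claim by an iterated form of the pigeonhole argument behind Theorems \ref{TwL} and \ref{TwLd}. Since the hypothesis is $\degl K\geq 3$, I would first normalise: discarding finitely many initial terms (which, as stressed above, never affects the conclusion) I may assume $k_{n+1}-k_{n}\geq 3$ for all $n$, and it is enough to treat the model case $\deg K=m\geq 3$ with $k_{n}=mn$, the general $\degl K\geq 3$ case following by always working inside blocks of length $\geq 3$. By the rank theorem of Section~4 together with the inequality $\rk(n,T,h)\leq\rk_{g}(n,T,h)$, it then suffices to prove that $\rk_{g}(n,T,h)<\infty$ for every $h$ and every sufficiently large $n$: each such $n$ yields an index $k\in S(n,T,\rk_{g}(n,T,h))$ with $h\mid b_{T}(k)$, and choosing $n$ along a sparse enough sequence makes the sets $S(n,T,\rk_{g}(n,T,h))$ pairwise disjoint, producing infinitely many distinct solutions.

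To organise the level formulas I would set $B(n):=b_{T}(mn)$, so that (\ref{rekurT}) collapses to the single self-referential recurrence $B(n)=B(n-1)+B(\lfloor n/m\rfloor)$, with $b_{T}$ constant on each block $\{mn,\ldots,mn+m-1\}$. In these terms the passage from $\lvl(n,m,s-1)$ to $\lvl(n,m,s)$ is exactly one application of a fixed \emph{summation operator}: the successive differences of the level-$s$ window values are the level-$(s-1)$ window values read at $\lfloor\,\cdot\,/m\rfloor$, and each level contributes one new free constant $c_{s}\in\mathbb{Z}/h\mathbb{Z}$ (an integration constant). Finiteness of $\rk_{g}$ then becomes the assertion that, for $m\geq 3$ and every $h$, the zero-loci of the level-$\leq s$ formulas cover all of $(\mathbb{Z}/h\mathbb{Z})^{s}$ for some finite $s$; equivalently, no assignment of $c_{1},c_{2},\ldots$ can keep every formula a non-zero residue forever.

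I would establish this one prime power at a time. Let $p^{a}$ be the exact power of $p$ dividing $h$ and put $V=\min_{n\gg 1}v_{p}(B(n))$; if $V\geq a$ every block value is already divisible by $p^{a}$ and there is nothing to prove, so assume $V<a$ and pick $n$ with $v_{p}(B(n))=V$. Across a block of length $m\geq 3$ the summation step produces genuine consecutive window values forming an arithmetic progression with common difference $B(\lfloor\,\cdot\,/m\rfloor)$, of $p$-adic valuation exactly $V$, while every base value is divisible by $p^{V}$ by minimality of $V$. A progression of this shape and length at least $p^{a-V}$ must contain a term $\equiv 0\pmod{p^{a}}$, since after dividing by $p^{V}$ its common difference is a unit. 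The role of $m\geq 3$ is to manufacture such progressions of unbounded length: starting from the guaranteed equal-run of length $m$, I would iterate the bootstrap of Theorem \ref{TwLd}, which enlarges a run of length $d$ into one of length $\sim d^{2}$, so that after finitely many levels the available run-length exceeds $p^{a-V}$.

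Two points are where I expect the real difficulty to sit, and they are exactly why the statement is only conjectured. First, the valuation bookkeeping: one must prove that the regime $V<a$ can actually be forced and, crucially, that the common differences created along the tower of summation operators do not all remain divisible by $p$. This is precisely the degeneracy that occurs for $m=2$, $p=2$, where (as recorded in the Question on $\rk_{g}(n,2,4h)$) the choice $c_{1}=c_{2}=\cdots=1$ keeps every level formula in a coset avoiding multiples of $4$; showing that $m\geq 3$ breaks this degeneracy for \emph{every} prime $p$ is the heart of the matter. Second, the prime-power arguments must be glued for composite $h$: the rank theorem detects a single index with $h\mid b_{T}(k)$, whereas the above produces, a priori, different indices for different $p^{a}$. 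Upgrading to a simultaneous, surjectivity-type statement at a common level $s_{0}=\max_{p}s_{p}$, so that one index is divisible by every $p^{a}$ dividing $h$ at once, is the remaining obstacle I would need to clear.
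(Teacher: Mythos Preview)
The paper does not prove this statement: it appears in Section~7 as an open conjecture, motivated only by the heuristic that for $\degl K\geq 3$ the level sets $\lvl(n,K,s)$ grow much faster than for $\degl K=2$, together with the numerical evidence of Theorem~\ref{comgen2} and Appendix~C. So there is no proof in the paper to compare against; the question is whether your outline actually closes the gap. It does not, and to your credit you already say so in your last paragraph.

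Let me point to the most concrete place where the sketch, as written, breaks. The bootstrap of Theorem~\ref{TwLd} does not convert an arithmetic progression of length $d$ into one of length $\sim d^{2}$; it converts a \emph{constant run of zeros} mod $p$ of length $d$ into a constant run of length $d^{2}+d$, and only then reads off an AP one level higher. Your ``guaranteed equal-run of length $m$'' is not an equal-run of $B$ at all: block constancy gives equal values of $b_{T}$, which translates into an AP of $B$-values of length $m$, not a constant run. To iterate from an AP you would need its common difference to vanish, i.e.\ another zero, so the plan of growing APs until their length exceeds $p^{a-V}$ is circular for $a>1$. There is also a mismatch between your $\rk_{g}$ reduction (worst case over all assignments $c_{1},c_{2},\dots\in\mathbb{Z}/h\mathbb{Z}$) and your valuation argument, which defines $V=\min_{n}v_{p}(B(n))$ for a \emph{specific} sequence; in the general-case setting an adversary can set every $c_{i}$ to a unit mod $p$, forcing $V=0$, and then your required AP length is $p^{a}$, which throws you back to the unresolved problem with a larger modulus. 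Finally, even the prime base case is not in hand: Theorem~\ref{TwL} covers only $p\leq\degu K+1$, so for $\deg K=3$ it gives nothing beyond $p=3$, and for larger primes the finiteness of $\rk_{g}(n,3,p)$ is exactly what the tables in Appendix~C are probing case by case. Your diagnosis of the two obstacles (valuation degeneracy of the common differences, and CRT gluing for composite $h$) matches the paper's own view of why this is only a conjecture; the intermediate steps of your sketch do not yet bypass either one.
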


Here is another conjecture which is connected with Theorem \ref{TwLd}.

\begin{con}
Let $L=(1,-1)$. If $\degl K\geq 2$ and if there exists $n_{1}$ such that $b_{T}(n_{1})$ is divisible by $h$, then there exist infinitely many numbers $n$ with this property.
\end{con}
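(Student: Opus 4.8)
The plan is to argue by contradiction along the lines of Theorems \ref{TwL} and \ref{TwLd}, but to replace the single ``one--step'' pigeonhole by an iterated \emph{leveling--up} procedure that manufactures runs of equal residues of unbounded length. Suppose $b_{T}(n)\equiv 0\pmod h$ has only finitely many solutions and let $n_{0}$ be the largest one. Starting from the hypothesised solution $n_{1}$, relation (\ref{rekurT}a) already produces a whole block of consecutive indices on which $b_{T}\equiv 0\pmod h$. The first ingredient is a leveling--up lemma generalising the ``$d^{2}+d$ equal values'' computation from the proof of Theorem \ref{TwLd} to sequences with non--constant gaps: if $b_{T}(j)\equiv c\pmod h$ for all $j$ in a run of $\ell$ consecutive indices $[A,B]$ with $\ell>u$, then, since $L=(1,-1)$, relation (\ref{rekurT}b) gives $b_{T}(k_{n})-b_{T}(k_{n-1})\equiv rc\pmod h$ for $A+u\le n\le B$, so that the lifted values $b_{T}(k_{A+u-1}),\dots,b_{T}(k_{B})$ form an arithmetic progression of common difference $rc$; spreading each of them by (\ref{rekurT}a) yields a new run at strictly larger indices whose length exceeds that of $[A,B]$ by a factor of at least $\degl K\ge 2$, up to a bounded boundary correction coming from $u$.

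The decisive feature is that this amplification by the factor $\degl K\ge 2$ occurs at \emph{every} application of (\ref{rekurT}a), regardless of whether the run is constant, arithmetic, or of higher order, since (\ref{rekurT}a) spreads each lifted value into a constant sub--block of length at least $\degl K$. Hence after finitely many levels the runs we produce are longer than $h$ and lie entirely above $n_{0}$. If the residue $c$ is ever $\equiv 0$, the lifted run is again constant, iterating keeps us in the $c\equiv 0$ regime, and we obtain a zero--run above $n_{0}$, contradicting maximality of $n_{0}$; so we may assume $rc\not\equiv 0\pmod h$ throughout. If at some stage $\gcd(rc,h)=1$, the lifted progression has unit step and length exceeding $h$, hence represents every residue class, in particular $0$, again contradicting the choice of $n_{0}$. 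Summing (\ref{rekurT}b) over an arithmetic (respectively polynomial) run turns it into a quadratic (respectively higher--degree) pattern, exactly the binomial--coefficient tableaux appearing in (\ref{equ1})--(\ref{equ2}); in the language of Section~4 the whole scheme amounts to showing that some $\lvl(n,T,s)$ contains an element $\equiv 0\pmod h$, i.e.\ that $\rk(n,T,h)<\infty$.

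The main obstacle is precisely the case of composite $h$ with $\gcd(rc,h)>1$, where $\mathbb{Z}/h\mathbb{Z}$ has zero divisors and the field arguments of Theorems \ref{TwL} and \ref{TwLd} break down: an arithmetic progression with non--unit step need not meet $0$, and the higher--level values are fixed integer (binomial) combinations of the uncontrolled seed quantities $c_{1},\dots,c_{s}$ which, over a non--field $\mathbb{Z}/h\mathbb{Z}$, can fail to cover $0$. This is the same phenomenon that is expected to force $\rk_{g}(n,2,h)=\infty$ when $4\mid h$ and that underlies $b_{2}(n)\not\equiv 0\pmod 8$. The role of the hypothesis ``there exists $n_{1}$ with $h\mid b_{T}(n_{1})$'' is exactly to pin one of the seed coordinates $c_{i}$ to $0$, thereby excluding the all--units (e.g.\ all--ones) bad configurations responsible for $\rk_{g}=\infty$. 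Converting this linear--algebraic input over $\mathbb{Z}/h\mathbb{Z}$ into a guarantee that some higher level actually attains $0$, \emph{uniformly} in the unknown remaining seed values, is the step I do not see how to complete in general, and is where any proof must concentrate its effort; this is why the statement is recorded only as a conjecture.
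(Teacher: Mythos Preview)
The statement you are attempting is recorded in the paper as a \emph{conjecture}; the paper offers no proof, only the remark that it is ``connected with Theorem~\ref{TwLd}''. So there is no proof in the paper to compare against, and your closing sentence already acknowledges this.

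Your write-up is therefore not a proof but a heuristic discussion of why the strategy of Theorems~\ref{TwL} and~\ref{TwLd} does not obviously extend, and on that level it is accurate: the field-based pigeonhole breaks down over $\mathbb{Z}/h\mathbb{Z}$ for composite $h$, and the paper's own computational evidence (the suspected $\rk_{g}(n,2,4h)=\infty$) shows that the general level sets need not hit $0$. Your identification of the role of the hypothesis $h\mid b_{T}(n_{1})$---pinning one seed coordinate to zero---is also the natural reading.

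One point in your sketch is looser than you suggest. You write that if the constant value $c$ on a run is $\equiv 0$, then ``iterating keeps us in the $c\equiv 0$ regime'' and produces a zero-run above $n_{0}$. But lifting a run with $c\equiv 0$ yields a \emph{constant} run whose common value $c'$ is whatever $b_{T}(k_{A+u-1})$ happens to be, not $0$; the next iteration then has step $rc'$, which need not vanish. So the ``easy'' branch already feeds back into the hard branch after a single lift, and the dichotomy you set up (zero regime versus unit-step regime) does not persist. This does not change your conclusion---the argument is incomplete either way---but it means the obstruction appears even earlier than your exposition implies.
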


On the end we give the following problem:

\begin{pbm}
How may results of {\rm Sections 5} and {\rm 6} be extended to a more general case, that is to the sequences not necessarily satisfying $L=(-1,1)$?
\end{pbm}

\section*{Acknowledgements}

I wish to thank my wife for her patience. I would also like to thank Paulina Pełszyńska for her help with preparing the English version of this paper and Bartłomiej Puget for his help with computations. I am also very grateful to prof. Maciej Ulas for his profound comments.

\newpage

\section*{Appendix A}

\addcontentsline{toc}{section}{Appendix A}

The smallest solution $n_{1}$ of the equation $b_{m}(n)\equiv 0\pmod p$ for $3\leq m\leq 12$ and prime numbers $m+2\leq p\leq m^{2}+m+1$.

\scriptsize{
\begin{center}
\begin{minipage}[t]{.24\textwidth}
\begin{displaymath}
\begin{array}{|c|c|c|}
\hline
m & p & n_{1} \\ \hline
\hline
3 & 5 & 15 \\ \hline
3 & 7 & 21 \\ \hline
3 & 11 & 33 \\ \hline
3 & 13 & 39 \\ \hline
\hline
4 & 7 & 28 \\ \hline
4 & 11 & 44 \\ \hline
4 & 13 & 52 \\ \hline
4 & 17 & 452 \\ \hline
4 & 19 & 68 \\ \hline
\hline
5 & 7 & 25 \\ \hline
5 & 11 & 35 \\ \hline
5 & 13 & 40 \\ \hline
5 & 17 & 75 \\ \hline
5 & 19 & 80 \\ \hline
5 & 23 & 90 \\ \hline
5 & 29 & 225 \\ \hline
5 & 31 & 170 \\ \hline
\hline
6 & 11 & 42 \\ \hline
6 & 13 & 48 \\ \hline
6 & 17 & 90 \\ \hline
6 & 19 & 96 \\ \hline
6 & 23 & 108 \\ \hline
6 & 29 & 270 \\ \hline
6 & 31 & 204 \\ \hline
6 & 37 & 576 \\ \hline
6 & 41 & 150 \\ \hline
6 & 43 & 426 \\ \hline
\hline
7 & 11 & 49 \\ \hline
7 & 13 & 56 \\ \hline
7 & 17 & 105 \\ \hline
7 & 19 & 112 \\ \hline
7 & 23 & 126 \\ \hline
7 & 29 & 315 \\ \hline
7 & 31 & 238 \\ \hline
7 & 37 & 672 \\ \hline
7 & 41 & 175 \\ \hline
7 & 43 & 497 \\ \hline
7 & 47 & 259 \\ \hline
7 & 53 & 581 \\ \hline
\hline
8 & 11 & 56 \\ \hline
\end{array}
\end{displaymath}
\end{minipage}
\begin{minipage}[t]{.24\textwidth}
\begin{displaymath}
\begin{array}{|c|c|c|}
\hline
m & p & n_{1} \\ \hline
\hline
8 & 13 & 64 \\ \hline
8 & 17 & 120 \\ \hline
8 & 19 & 128 \\ \hline
8 & 23 & 144 \\ \hline
8 & 29 & 360 \\ \hline
8 & 31 & 272 \\ \hline
8 & 37 & 768 \\ \hline
8 & 41 & 200 \\ \hline
8 & 43 & 568 \\ \hline
8 & 47 & 296 \\ \hline
8 & 53 & 664 \\ \hline
8 & 59 & 288 \\ \hline
8 & 61 & 368 \\ \hline
8 & 67 & 384 \\ \hline
8 & 71 & 744 \\ \hline
8 & 73 & 264 \\ \hline
\hline
9 & 11 & 63 \\ \hline
9 & 13 & 72 \\ \hline
9 & 17 & 135 \\ \hline
9 & 19 & 144 \\ \hline
9 & 23 & 162 \\ \hline
9 & 29 & 405 \\ \hline
9 & 31 & 306 \\ \hline
9 & 37 & 864 \\ \hline
9 & 41 & 225 \\ \hline
9 & 43 & 639 \\ \hline
9 & 47 & 333 \\ \hline
9 & 53 & 747 \\ \hline
9 & 59 & 324 \\ \hline
9 & 61 & 414 \\ \hline
9 & 67 & 432 \\ \hline
9 & 71 & 837 \\ \hline
9 & 73 & 297 \\ \hline
9 & 79 & 1143 \\ \hline
9 & 83 & 315 \\ \hline
9 & 89 & 234 \\ \hline
\hline
10 & 13 & 80 \\ \hline
10 & 17 & 150 \\ \hline
10 & 19 & 160 \\ \hline
10 & 23 & 180 \\ \hline
\end{array}
\end{displaymath}
\end{minipage}
\begin{minipage}[t]{.24\textwidth}
\begin{displaymath}
\begin{array}{|c|c|c|}
\hline
m & p & n_{1} \\ \hline
\hline
10 & 29 & 450 \\ \hline
10 & 31 & 340 \\ \hline
10 & 37 & 960 \\ \hline
10 & 41 & 250 \\ \hline
10 & 43 & 710 \\ \hline
10 & 47 & 370 \\ \hline
10 & 53 & 830 \\ \hline
10 & 59 & 360 \\ \hline
10 & 61 & 460 \\ \hline
10 & 67 & 480 \\ \hline
10 & 71 & 930 \\ \hline
10 & 73 & 330 \\ \hline
10 & 79 & 1270 \\ \hline
10 & 83 & 350 \\ \hline
10 & 89 & 260 \\ \hline
10 & 97 & 1930 \\ \hline
10 & 101 & 520 \\ \hline
10 & 103 & 280 \\ \hline
10 & 107 & 2130 \\ \hline
10 & 109 & 590 \\ \hline
\hline
11 & 13 & 88 \\ \hline
11 & 17 & 165 \\ \hline
11 & 19 & 176 \\ \hline
11 & 23 & 198 \\ \hline
11 & 29 & 495 \\ \hline
11 & 31 & 374 \\ \hline
11 & 37 & 1056 \\ \hline
11 & 41 & 275 \\ \hline
11 & 43 & 781 \\ \hline
11 & 47 & 407 \\ \hline
11 & 53 & 913 \\ \hline
11 & 59 & 396 \\ \hline
11 & 61 & 506 \\ \hline
11 & 67 & 528 \\ \hline
11 & 71 & 1023 \\ \hline
11 & 73 & 363 \\ \hline
11 & 79 & 1397 \\ \hline
11 & 83 & 385 \\ \hline
11 & 89 & 286 \\ \hline
11 & 97 & 2123 \\ \hline
\end{array}
\end{displaymath}
\end{minipage}
\begin{minipage}[t]{.24\textwidth}
\begin{displaymath}
\begin{array}{|c|c|c|}
\hline
m & p & n_{1} \\ \hline
\hline
11 & 101 & 572 \\ \hline
11 & 103 & 308 \\ \hline
11 & 107 & 2343 \\ \hline
11 & 109 & 649 \\ \hline
11 & 113 & 3795 \\ \hline
11 & 127 & 2486 \\ \hline
11 & 131 & 473 \\ \hline
\hline
12 & 17 & 180 \\ \hline
12 & 19 & 192 \\ \hline
12 & 23 & 216 \\ \hline
12 & 29 & 540 \\ \hline
12 & 31 & 408 \\ \hline
12 & 37 & 1152 \\ \hline
12 & 41 & 300 \\ \hline
12 & 43 & 852 \\ \hline
12 & 47 & 444 \\ \hline
12 & 53 & 996 \\ \hline
12 & 59 & 432 \\ \hline
12 & 61 & 552 \\ \hline
12 & 67 & 576 \\ \hline
12 & 71 & 1116 \\ \hline
12 & 73 & 396 \\ \hline
12 & 79 & 1524 \\ \hline
12 & 83 & 420 \\ \hline
12 & 89 & 312 \\ \hline
12 & 97 & 2316 \\ \hline
12 & 101 & 624 \\ \hline
12 & 103 & 336 \\ \hline
12 & 107 & 2556 \\ \hline
12 & 109 & 708 \\ \hline
12 & 113 & 4140 \\ \hline
12 & 127 & 2712 \\ \hline
12 & 131 & 516 \\ \hline
12 & 137 & 384 \\ \hline
12 & 139 & 1464 \\ \hline
12 & 149 & 1644 \\ \hline
12 & 151 & 6000 \\ \hline
12 & 157 & 3300 \\ \hline
\end{array}
\end{displaymath}
\end{minipage}
\end{center}
}
\normalsize{
\medskip
Link to the full list: {\tt https://github.com/blazejz/partitions} in section b.txt.

\newpage

\section*{Appendix B}

\addcontentsline{toc}{section}{Appendix B}

The smallest solution $n_{1}$ of the equation $c_{m}(n)\equiv 0\pmod p$ for $3\leq m\leq 12$ and prime numbers $m+2\leq p\leq m^{2}+m+1$.
}
\scriptsize{
\begin{center}
\begin{minipage}[t]{.24\textwidth}
\begin{displaymath}
\begin{array}{|c|c|c|}
\hline
m & p & n_{1} \\ \hline
\hline
3 & 5 & 34 \\ \hline
3 & 7 & 46 \\ \hline
3 & 11 & 82 \\ \hline
3 & 13 & 70 \\ \hline
\hline
4 & 7 & 61 \\ \hline
4 & 11 & 109 \\ \hline
4 & 13 & 93 \\ \hline
4 & 17 & 69 \\ \hline
4 & 19 & 197 \\ \hline
\hline
5 & 7 & 46 \\ \hline
5 & 11 & 61 \\ \hline
5 & 13 & 86 \\ \hline
5 & 17 & 101 \\ \hline
5 & 19 & 56 \\ \hline
5 & 23 & 171 \\ \hline
5 & 29 & 216 \\ \hline
5 & 31 & 76 \\ \hline
\hline
6 & 11 & 73 \\ \hline
6 & 13 & 103 \\ \hline
6 & 17 & 121 \\ \hline
6 & 19 & 67 \\ \hline
6 & 23 & 205 \\ \hline
6 & 29 & 259 \\ \hline
6 & 31 & 91 \\ \hline
6 & 37 & 343 \\ \hline
6 & 41 & 157 \\ \hline
6 & 43 & 109 \\ \hline
\hline
7 & 11 & 85 \\ \hline
7 & 13 & 120 \\ \hline
7 & 17 & 141 \\ \hline
7 & 19 & 78 \\ \hline
7 & 23 & 239 \\ \hline
7 & 29 & 302 \\ \hline
7 & 31 & 106 \\ \hline
7 & 37 & 400 \\ \hline
7 & 41 & 183 \\ \hline
7 & 43 & 127 \\ \hline
7 & 47 & 134 \\ \hline
7 & 53 & 211 \\ \hline
\hline
8 & 11 & 97 \\ \hline
\end{array}
\end{displaymath}
\end{minipage}
\begin{minipage}[t]{.24\textwidth}
\begin{displaymath}
\begin{array}{|c|c|c|}
\hline
m & p & n_{1} \\ \hline
\hline
8 & 13 & 137 \\ \hline
8 & 17 & 161 \\ \hline
8 & 19 & 89 \\ \hline
8 & 23 & 273 \\ \hline
8 & 29 & 345 \\ \hline
8 & 31 & 121 \\ \hline
8 & 37 & 457 \\ \hline
8 & 41 & 209 \\ \hline
8 & 43 & 145 \\ \hline
8 & 47 & 153 \\ \hline
8 & 53 & 241 \\ \hline
8 & 59 & 3201 \\ \hline
8 & 61 & 177 \\ \hline
8 & 67 & 617 \\ \hline
8 & 71 & 193 \\ \hline
8 & 73 & 281 \\ \hline
\hline
9 & 11 & 109 \\ \hline
9 & 13 & 154 \\ \hline
9 & 17 & 181 \\ \hline
9 & 19 & 100 \\ \hline
9 & 23 & 307 \\ \hline
9 & 29 & 388 \\ \hline
9 & 31 & 136 \\ \hline
9 & 37 & 514 \\ \hline
9 & 41 & 235 \\ \hline
9 & 43 & 163 \\ \hline
9 & 47 & 172 \\ \hline
9 & 53 & 271 \\ \hline
9 & 59 & 3601 \\ \hline
9 & 61 & 199 \\ \hline
9 & 67 & 694 \\ \hline
9 & 71 & 217 \\ \hline
9 & 73 & 316 \\ \hline
9 & 79 & 622 \\ \hline
9 & 83 & 334 \\ \hline
9 & 89 & 3331 \\ \hline
\hline
10 & 13 & 171 \\ \hline
10 & 17 & 201 \\ \hline
10 & 19 & 111 \\ \hline
10 & 23 & 341 \\ \hline
\end{array}
\end{displaymath}
\end{minipage}
\begin{minipage}[t]{.24\textwidth}
\begin{displaymath}
\begin{array}{|c|c|c|}
\hline
m & p & n_{1} \\ \hline
\hline
10 & 29 & 431 \\ \hline
10 & 31 & 151 \\ \hline
10 & 37 & 571 \\ \hline
10 & 41 & 261 \\ \hline
10 & 43 & 181 \\ \hline
10 & 47 & 191 \\ \hline
10 & 53 & 301 \\ \hline
10 & 59 & 4001 \\ \hline
10 & 61 & 221 \\ \hline
10 & 67 & 771 \\ \hline
10 & 71 & 241 \\ \hline
10 & 73 & 351 \\ \hline
10 & 79 & 691 \\ \hline
10 & 83 & 371 \\ \hline
10 & 89 & 3701 \\ \hline
10 & 97 & 4651 \\ \hline
10 & 101 & 951 \\ \hline
10 & 103 & 1611 \\ \hline
10 & 107 & 871 \\ \hline
10 & 109 & 621 \\ \hline
\hline
11 & 13 & 188 \\ \hline
11 & 17 & 221 \\ \hline
11 & 19 & 122 \\ \hline
11 & 23 & 375 \\ \hline
11 & 29 & 474 \\ \hline
11 & 31 & 166 \\ \hline
11 & 37 & 628 \\ \hline
11 & 41 & 287 \\ \hline
11 & 43 & 199 \\ \hline
11 & 47 & 210 \\ \hline
11 & 53 & 331 \\ \hline
11 & 59 & 4401 \\ \hline
11 & 61 & 243 \\ \hline
11 & 67 & 848 \\ \hline
11 & 71 & 265 \\ \hline
11 & 73 & 386 \\ \hline
11 & 79 & 760 \\ \hline
11 & 83 & 408 \\ \hline
11 & 89 & 4071 \\ \hline
11 & 97 & 5116 \\ \hline
\end{array}
\end{displaymath}
\end{minipage}
\begin{minipage}[t]{.24\textwidth}
\begin{displaymath}
\begin{array}{|c|c|c|}
\hline
m & p & n_{1} \\ \hline
\hline
11 & 101 & 1046 \\ \hline
11 & 103 & 1772 \\ \hline
11 & 107 & 958 \\ \hline
11 & 109 & 683 \\ \hline
11 & 113 & 2245 \\ \hline
11 & 127 & 2982 \\ \hline
11 & 131 & 4995 \\ \hline
\hline
12 & 17 & 241 \\ \hline
12 & 19 & 133 \\ \hline
12 & 23 & 409 \\ \hline
12 & 29 & 517 \\ \hline
12 & 31 & 181 \\ \hline
12 & 37 & 685 \\ \hline
12 & 41 & 313 \\ \hline
12 & 43 & 217 \\ \hline
12 & 47 & 229 \\ \hline
12 & 53 & 361 \\ \hline
12 & 59 & 4801 \\ \hline
12 & 61 & 265 \\ \hline
12 & 67 & 925 \\ \hline
12 & 71 & 289 \\ \hline
12 & 73 & 421 \\ \hline
12 & 79 & 829 \\ \hline
12 & 83 & 445 \\ \hline
12 & 89 & 4441 \\ \hline
12 & 97 & 5581 \\ \hline
12 & 101 & 1141 \\ \hline
12 & 103 & 1933 \\ \hline
12 & 107 & 1045 \\ \hline
12 & 109 & 745 \\ \hline
12 & 113 & 2449 \\ \hline
12 & 127 & 3253 \\ \hline
12 & 131 & 5449 \\ \hline
12 & 137 & 1249 \\ \hline
12 & 139 & 5269 \\ \hline
12 & 149 & 577 \\ \hline
12 & 151 & 4249 \\ \hline
12 & 157 & 2785 \\ \hline
\end{array}
\end{displaymath}
\end{minipage}
\end{center}
}
\normalsize {}
\medskip
Link to the full list: {\tt https://github.com/blazejz/partitions} in section c.txt.

\newpage

\section*{Appendix C}

\addcontentsline{toc}{section}{Appendix C}

In the first and the third columns there are presented the values of $\rk_{g}(n,2,h)$ and $\rk_{g}(n,3,h)$, respectively for $n$ big enough and particular values of $h$.

In the second column there are values of $\rk(n,2,h)$ in the case of $b_{K}=b_{2}$ for $n\geq 2$ and certain values of $h$ of the form $h=8h'+4$.

\small{
\begin{minipage}[t]{.35\textwidth}
\begin{displaymath}
\begin{array}{|c|c||c|c|}
\hline
h & \rk_{g}(n,2,h) & h & \rk_{g}(n,2,h) \\ \hline
\hline
3 & 2 & 23 & 8 \\ \hline
4 & >13 & 24 & >13 \\ \hline
5 & 4 & 25 & 9 \\ \hline
6 & 5 & 26 & 9 \\ \hline
7 & 4 & 27 & 9 \\ \hline
8 & >13 & 28 & >13 \\ \hline
9 & 6 & 29 & 9 \\ \hline
10 & 7 & 30 & 10 \\ \hline
11 & 6 & 31 & 9 \\ \hline
12 & >13 & 32 & >13 \\ \hline
13 & 7 & 33 & 9 \\ \hline
14 & 8 & 34 & 10 \\ \hline
15 & 7 & 35 & 9 \\ \hline
16 & >13 & 36 & >13 \\ \hline
17 & 8 & 37 & 9 \\ \hline
18 & 8 & 38 & 10 \\ \hline
19 & 8 & 39 & 10 \\ \hline
20 & >13 & 40 & >13 \\ \hline
21 & 8 & 41 & 10 \\ \hline
22 & 9 & & \\ \hline
\end{array}
\end{displaymath}
\end{minipage}
\begin{minipage}[t]{.2\textwidth}
\begin{displaymath}
\begin{array}{|c|c|}
\hline
h & \rk(n,2,h) \\ \hline
\hline
4 & \leq 2 \\ \hline
12 & \leq 5 \\ \hline
20 & \leq 7 \\ \hline
28 & \leq 8 \\ \hline
36 & \leq 8 \\ \hline
44 & \leq 9 \\ \hline
52 & \leq 9 \\ \hline
60 & \leq 10 \\ \hline
68 & \leq 10 \\ \hline
76 & \leq 10 \\ \hline
\end{array}
\end{displaymath}
\end{minipage}
\begin{minipage}[t]{.35\textwidth}
\begin{displaymath}
\begin{array}{|c|c||c|c|}
\hline
h & \rk_{g}(n,3,h)& h & \rk_{g}(n,3,h) \\ \hline
\hline
3 & 2 & 27 & 9 \\ \hline
4 & 4 & 28 & 8 \\ \hline
5 & 4 & 29 & 6 \\ \hline
6 & 4 & 30 & 9 \\ \hline
7 & 4 & 31 & 6 \\ \hline
8 & 5 & 32 & 8 \\ \hline
9 & 5 & 33 & 8 \\ \hline
10 & 5 & 34 & 7 \\ \hline
11 & 5 & 35 & 8 \\ \hline
12 & 6 & 36 & >9 \\ \hline
13 & 5 & 37 & 6 \\ \hline
14 & 6 & 38 & 7 \\ \hline
15 & 6 & 39 & 8 \\ \hline
16 & 6 & 40 & 9 \\ \hline
17 & 5 & 41 & 6 \\ \hline
18 & 7 & 42 & >9 \\ \hline
19 & 5 & 43 & 6 \\ \hline
20 & 7 & 44 & 8 \\ \hline
21 & 7 & 45 & >9 \\ \hline
22 & 7 & 46 & 8 \\ \hline
23 & 6 & 47 & 6  \\ \hline
24 & 9 & 48 & >9 \\ \hline
25 & 7 & 49 & 7 \\ \hline
26 & 7 & 50 & 8 \\ \hline
\end{array}
\end{displaymath}
\end{minipage}
}
\normalsize{}
\begin{flushleft}
Link to the full list of $\rk_{g}(n,2,l)$: {\tt https://github.com/blazejz/partitions} in section $2$--ary.txt.\\
Link to the full list of $\rk(n,2,l)$: {\tt https://github.com/blazejz/partitions} in section $2$--ary--$2$--step.txt.\\
Link to the full list of $\rk_{g}(n,3,l)$: {\tt https://github.com/blazejz/partitions} in section $3$--ary.txt.
\end{flushleft}

\normalsize{

}

\end{document}